\DeclareMathOperator{\Tr}{Tr}
\newcommand*{\Trace}[3][\pm]{\Tr^{#1}(#2, #3)}
\newcommand*{\Trp}[2]{\Trace[i]{#1}{#2}}
\newcommand*{\Trm}[2]{\Trace[e]{#1}{#2}}
\newcommand*{\chiut}[1][]{\chi^{#1}_{\{u > t\}}}
\newcommand*{\jump}[1]{\Theta_{#1}}
\def\nuint{\widetilde{\nu}}
\newcommand{\A}{\boldsymbol{A}}
\newcommand{\mset}{\omega}
\def\DM{{\mathcal{DM}^{\infty}}}
\newcommand*{\BVA}[1][\Omega]{BV({#1}) \cap L^1(#1, |\Div\A|)}
\newcommand*{\BVL}[1][\R^N]{BV(#1)\cap L^{\infty}{(#1)}}
\newcommand*{\BVAlocloc}[1][\Omega]{BV_{\rm loc}({#1})\cap L^1_{\rm loc}(#1, |\Div\A|)}
\newcommand*{\DMlocloc}[1][\Omega]{\mathcal{DM}^{\infty}_{{\rm loc}}{(#1)}}
\newcommand*{\BVLlocloc}[1][\Omega]{BV_{{\rm loc}}(#1)\cap L^{\infty}_{{\rm loc}}{(#1)}}
\newcommand*{\BVAloc}[1][\Omega]{BV({#1})\cap L^1(#1, |\Div\A|)}
\newcommand*{\DMloc}[1][\Omega]{\mathcal{DM}^{\infty}{(#1)}}
\newcommand*{\BVLloc}[1][\Omega]{BV(#1)\cap L^{\infty}{(#1)}}
\newcommand{\mean}[1]{\,-\hskip-1.08em\int_{#1}} 
\newcommand{\Leb}[1]{\mathcal{L}^{#1}} 
\def\R{\mathbb{R}}
\def\N{\mathbb{N}}
\def\cinftio{C^{\infty}_c(\Omega)}
\def\intom#1{\int_{\Omega} #1\,  dx}
\def\cinfc{C^\infty_c(\Omega)}
\DeclareMathOperator{\Div}{div}
\newcommand{\medint}{-\kern  -,375cm\int}
\newcommand{\medintinrigo}{-\kern  -,315cm\int}
\newcommand{\eps}{\varepsilon}
 \newcommand{\hh}{{\mathcal H}^{N-1}}
\newcommand{\LLN}{{\mathcal L}^N}
\newcommand{\LLU}{{\mathcal L}^1}
\newcommand{\M}[1]{\mathcal{#1}}    
\renewcommand{\H}{\M{H}}
\newcommand{\Haus}[1]{{\mathcal H}^{#1}} 
\newcommand{\res}{\mathop{\hbox{\vrule height 7pt width .5pt depth 0pt
\vrule height .5pt width 6pt depth 0pt}}\nolimits} 
\def\pscal#1#2{\left\langle #1\,,\, #2 \right\rangle}
\def\ut{\widetilde{u}}
\def\polar{\theta_\lambda}
\renewcommand{\prec}[2][\lambda]{#2^{#1}}
\newcommand{\pchiut}{\chiut[\lambda]}
\def\intA#1{\int_{\Omega} #1\,  d(\Div\A)}
\newcommand{\pair}[2][\lambda]{\left(#2\right)_{#1}}
\newcommand{\spair}[1]{\pair[*]{#1}}
\def\upiu{u^+}
\def\umeno{u^-}
\def\uint{{u^{i}}}
\def\uext{{u^{e}}}
\def\ust{u^*}
\def\vint{{v^{i}}}
\def\vext{{v^{e}}}
\def\radon{\mathcal{M}(\Omega)}
\def\preciso#1{\widetilde #1}
\long\def\taglio#1{}
\newtheorem{definition}{Definition}[section]
\newtheorem{lemma}[definition]{Lemma}
\newtheorem{theorem}[definition]{Theorem}
\newtheorem{proposition}[definition]{Proposition}
\newtheorem{corollary}[definition]{Corollary}
\theoremstyle{remark}
\newtheorem{remark}[definition]{Remark}
\newtheorem{example}[definition]{Example}
\def\@settitle{\begin{center}%
		\baselineskip14\p@\relax
		\bfseries
		\uppercasenonmath\@title
		\@title
		\ifx\@subtitle\@empty\else
		\\[5ex]
		\normalsize\mdseries\@subtitle
		\fi
	\end{center}%
}
\def\subtitle#1{\gdef\@subtitle{#1}}
\def\@subtitle{}
\begin{document}
\title[Pairings between divergence--measure vector fields and BV functions]
{Pairings between bounded divergence--measure vector fields and BV functions}

\author[G.~Crasta]{Graziano Crasta}
\address{Dipartimento di Matematica ``G.\ Castelnuovo'', 
Sapienza Universit\`a di Roma\\
	P.le A.\ Moro 5 -- I-00185 Roma (Italy)}
\email{crasta@mat.uniroma1.it}
\author[V.~De Cicco]{Virginia De Cicco}
\address{Dipartimento di Scienze di Base  e Applicate per l'Ingegneria,
 Sapienza Universit\`a di Roma\\
	Via A.\ Scarpa 10 -- I-00185 Roma (Italy)}
\email{virginia.decicco@sbai.uniroma1.it}
\author[A.~Malusa]{Annalisa Malusa}
\address{Dipartimento di Matematica ``G.\ Castelnuovo'', Sapienza Universit\`a di Roma\\
	P.le A.\ Moro 5 -- I-00185 Roma (Italy)}
\email{malusa@mat.uniroma1.it}

\keywords{Divergence--measure vector fields, functions of bounded variation, coarea formula, Gauss-Green formula, semicontinuity}
\subjclass[2010]{26B30,49Q15,49J45}

\date{October 14, 2019}

\begin{abstract}
We introduce a family of pairings between a bounded divergence-measure vector field and a function $u$ of bounded variation, depending
on the choice of the pointwise representative of $u$.
We prove that these pairings inherit from the standard one,
introduced in \cite{Anz,ChenFrid}, all the main
properties and features 
(e.g.\ coarea, Leibniz and Gauss--Green formulas).
We also characterize the pairings making the corresponding
functionals semicontinuous with respect to the strict convergence in $BV$.
We remark that
the standard pairing in general does not share this property.
\end{abstract}

\maketitle

\tableofcontents

\section{Introduction}

In the seminal papers \cite{Anz,ChenFrid},
the product rule
\begin{equation}
\label{f:pair0}
\Div(u \A) = u\, \Div\A + \A\cdot \nabla u\,,
\end{equation}
for smooth functions $u$ and regular vector fields $\A$ in $\R^N$,
has been suitably extended to $BV$ functions and
bounded divergence-measure vector fields.
In particular, Chen and Frid \cite{ChenFrid} showed,
using a regularization argument, that there exists a
finite Radon measure
$\spair{\A, Du}$, which coincides to $\A\cdot\nabla u\, \LLN$
in the smooth case, such that the relation
\begin{equation}\label{f:standard}
\Div(u \A) = u^*\, \Div\A + \spair{\A, Du}
\end{equation}
holds in the sense of measures.
The measure \(\spair{\A, Du}\),
usually called Anzellotti's pairing and
that we call in the sequel 
the \textsl{standard pairing} between $\A$ and $Du$,
is then defined in terms of
the precise representative $u^*$ of $u$,
which is the pointwise value of $u$ obtained as limit
of regularizations by convolutions.

The standard pairing turns out to be a basic tool
in many applications.
We mention here, among others:
extensions of the Gauss--Green formula
\cite{Anz,Cas,ChCoTo,ChTo,ChToZi,ComiMag,ComiPayne,CD3,CD4,LeoSar};
the setting of the Euler--Lagrange equations associated 
with integral functionals defined in $BV$
\cite{ABCM,MaRoSe,Mazon2016};
Dirichlet problems for equations involving the $1$--Laplace operator 
\cite{K1,HI,AVCM,Cas,DeGiSe,DeGiOlPe};
conservation laws
\cite{ChFr1,ChenFrid,ChTo2,ChTo,ChToZi,CD2};
the Prescribed Mean Curvature problem and capillarity
\cite{LeoSar,LeoSar2};
continuum mechanics
\cite{ChCoTo,DGMM,Silh,Schu}.

On the other hand, the standard pairing 
is not adequate when dealing with
obstacle problems in $BV$ (see \cite{SchSch,SchSch2,SchSch3})
or with semicontinuity properties,
as we will explain below.
The aim of this paper is to introduce a new family of pairings,
depending on the choice of the pointwise representative of $u$,
suitable to treat this kind of problems.

\medskip

The main ingredients to build this family of pairings are the
absolute continuity of the measure $\Div\A$ with respect to the
$(N-1)$-dimensional Hausdorff measure $\Haus{N-1}$,
and the fact that the pointwise value of a $BV$ function
can be specified up to a $\Haus{N-1}$-negligible set.
Indeed, a $BV$ function $u$ is approximately continuous outside
a singular set $S_u$ and
its approximate upper and lower limits $u^+$ and $u^-$
coincide with the traces of $u$ on the
countably $\Haus{N-1}$-rectifiable
jump set $J_u\subset S_u$, with $\Haus{N-1}(S_u\setminus J_u) = 0$
(see Section~\ref{ss:BV}).
Hence, a representative of $u$ can be defined
by its approximate limit $\tilde{u}$ outside $S_u$ and
through its traces $u^\pm$ on $J_u$.
We remark again that the presence of $u^* := (u^++u^-)/2$ 
in \eqref{f:standard}
as the pointwise representative of $u$
is due to the regularization argument used in \cite{ChenFrid}
in order to define the standard pairing.

Recently, Scheven and Schmidt \cite{SchSch,SchSch2,SchSch3}
have been in need to introduce
the pairing 
\begin{equation}
\label{f:sspair}
\pair[1]{\A, Du} := -u^+ \Div\A + \Div(u\A)
\end{equation}
in order to study weakly $1$-superharmonic functions
and minimization problems for the total variation
with an obstacle.
Indeed, in this case, the presence of the representative $u^+$
comes out 
from \eqref{f:pair0}
using the one-sided approximation procedure of $u$
introduced in~\cite{CDLP}.

\medskip
In this paper we prove that,
for every Borel function $\lambda\colon\R^N\to [0,1]$,
there exists a measure
$\pair{\A, Du}$ such that
\begin{equation}\label{f:genp}
\Div(u \A) = \prec{u}\, \Div\A + \pair{\A, Du},
\end{equation}
where
$\prec{u} := (1-\lambda)\umeno+\lambda\upiu$
is a selection of the multifunction $x\mapsto [\umeno(x), \upiu(x)]$.
We show that, if the jump part $\Div^j\A$ of $\Div\A$ vanishes
(see Proposition~\ref{p:basicVF} for the definition),
then $\pair{\A, Du}$ is independent
of $\lambda$.

We show that this freedom in the choice of $\prec{u}$ 
is necessary in order to obtain semicontinuity results
in $BV$ for the functionals 
\begin{equation}
\label{f:Fu}
F_\varphi(u) := \pscal{\pair{\A, Du}}{\varphi},
\qquad
\varphi\in C_c(\R^N),\quad
\varphi\geq 0\,.
\end{equation}
We characterize
the selections $\lambda$ such that these functionals
are lower (resp.\ upper) semicontinuous with respect to
the strict convergence in $BV$.
More precisely, 
denoting by $(\Div\A)^\pm$ the 
positive and the negative part of the measure $\Div\A$,
the choices of $\lambda$ which guarantee the lower
semicontinuity of the functionals in \eqref{f:Fu} satisfy
\begin{equation}
\label{f:lsc0}
\pair{\A, Du} = -u^+\, (\Div\A)^+ + u^-\, (\Div\A)^- + \Div(u\A),
\end{equation}
whereas the upper semicontinuity is characterized by
\begin{equation}
\label{f:usc0}
\pair{\A, Du} = -u^-\, (\Div\A)^+ + u^+\, (\Div\A)^- + \Div(u\A)\,.
\end{equation}
As a consequence, it is a matter of fact that, in general, the standard pairing
does not share these semicontinuity properties.
On the other hand, if $\Div\A \leq 0$, as in \cite{SchSch,SchSch2,SchSch3},
from the above result follows that the pairing \eqref{f:sspair} is upper semicontinuous
with respect to the strict convergence in $BV$.

\medskip
The plan of the paper is the following.
In Section~\ref{s:prelim} we recall some known results
on $BV$ functions, divergence-measure vector fields and their
weak normal traces. 
In Section~\ref{s:luno}
we focus our attention on the summability of
$\prec{u}$ with respect to the measure $|\Div\A|$
and on some related properties of the truncated functions.
In Sections~\ref{s:genpair}, \ref{s:coarea} and \ref{s:chain} we introduce the generalized pairing
and we prove that 
it inherits from the standard one
all the main
properties and features.
More precisely, $\pair{\A, Du}$ is a Radon measure,
absolutely continuous with respect to $|Du|$,
it satisfies the coarea, the chain rule and the Leibniz formulas,
and it is consistent with the Gauss--Green formula.

The proofs of these results are based on
the analogous properties valid for the standard pairing
(see \cite{CD3}), the fact that the generalized pairing
differs from the standard one only by a term concentrated
on $J_u$ (see \eqref{f:resto}),
and some representation results of the normal traces of $\A$
on $J_u$ (see \cite{AmbCriMan}).

\smallskip
Our main application 
of the above theory is proposed in Section~\ref{s:sc},
where we consider the semicontinuity properties of the functionals
$F_\varphi$ defined in \eqref{f:Fu},
with respect to the strict convergence in $BV$. 
In Theorem~\ref{t:lsc} we prove
the characterizations \eqref{f:lsc0}--\eqref{f:usc0}
of the semicontinuous pairings. 
The proof is based 
on a recent result of Lahti (see \cite{La}), which assures the lower (upper) 
semicontinuity of the lower $u^-$ (upper $u^+$) limit under the strict 
convergence in $BV$,
combined with the one-sided approximation result
in \cite{CDLP},
and a very careful treatment of the jump part of the measure $\Div\A$.
We show by easy examples that no semicontinuity property has to be expected
with respect to the weak${}^*$ convergence in $BV$.

\section{Notation and preliminary results}
\label{s:prelim}

In the following \(\Omega\) will always denote a nonempty open subset of 
\(\R^N\). 
For every $E\subset \Omega$, $\chi_{E}$ denotes its characteristic function. 
We say that $E_h$ converges to $E$ if $\chi_{E_h}$ converges to $\chi_{E}$ in 
$L^1(\Omega)$. 

We denote by
$\LLN$ 
and $\hh$
the Lebesgue measure 
and the $(N-1)$--dimensional 
Hausdorff measure in $\R^N$, respectively.

If \(E\subset\R^N\) is an open set,
the notation $\varphi \nearrow \chi_{E}$ denotes any family $(\varphi_j)$ of 
smooth functions with support in $E$, such that $0\leq \varphi_j \leq 1$, and $\lim_j \varphi_j(x)=1$
for every $x\in E$.

Given an \(\LLN\)-measurable set \(E\subset\R^N\),
For every \(t\in [0,1]\) we denote by \(E^t\) the set
\[
E^t := \left\{x\in\R^N:\
\lim_{\rho\to 0^+} \frac{\LLN(E\cap B_\rho(x))}{\LLN(B_\rho(x))} = t\right\}
\]
of all points where \(E\) has density \(t\).
The sets \(E^0\), \(E^1\), \(\partial^e E := \R^N\setminus (E^0 \cup E^1)\) are 
called 
respectively the \textsl{measure theoretic exterior}, 
the \textsl{measure theoretic interior} and
the \textsl{essential boundary} of \(E\).

\smallskip
Let $u\colon \Omega\to\R$ be a Borel function.
We denote by $\umeno$ and $\upiu$ 
the \textsl{approximate lower limit} and the 
\textsl{approximate upper limit} of $u$,
defined respectively by
\begin{gather*}
u^+(x) := \inf\{t\in\R:\ \{u>t\}\ \text{has density $0$ at $x$}\},
\\
u^-(x) := \sup\{t\in\R:\ \{u>t\}\ \text{has density $1$ at $x$}\}.
\end{gather*}
The function $u$ is \textsl{approximately continuous} at $x\in\Omega$
if $\upiu(x) = \umeno(x)$ and, in this case, we denote by
$\ut(x)$ the common value.

Given \(u\in L^1_{{\rm loc}}(\Omega)\),
$x\in\Omega$ is a \textsl{Lebesgue point} 
of $u$ (with respect to $\LLN$)
if there exists \(z\in\R\) such that
\[
\lim_{r\rightarrow0^{+}}\frac{1}{\LLN\left(  B_r(x)\right)}\int_{B_r\left(  
x\right)
}\left|  u(y)  -z  \right|  \,dy=0.
\]
In this case, $x$ is a point of approximate
continuity, and $z = \ut(x)$
(see \cite[Proposition~1.163]{FonLeoBook}). 
We denote by \(S_u\subset\Omega\) the set of points where this property does not hold.

We say that \(x\in\Omega\) is an {\sl approximate jump point} of \(u\) if
there exist \(a,b\in\R\) and a unit vector \(\nu\in\R^n\) such that \(a\neq b\)
and
\begin{equation}\label{f:disc}
\begin{gathered}
\lim_{r \to 0^+} \frac{1}{\LLN(B_r^i(x))}
\int_{B_r^i(x)} |u(y) - a|\, dy = 0,
\\
\lim_{r \to 0^+} \frac{1}{\LLN(B_r^e(x))}
\int_{B_r^e(x)} |u(y) - b|\, dy = 0,
\end{gathered}
\end{equation}
where \(B_r^i(x) := \{y\in B_r(x):\ (y-x)\cdot \nu > 0\}\), and 
\(B_r^e(x) := \{y\in B_r(x):\ (y-x)\cdot \nu < 0\}\).
The triplet \((a,b,\nu)\), uniquely determined by \eqref{f:disc} 
up to a permutation
of \((a,b)\) and a simultaneous change of sign of \(\nu\),
is denoted by \((\uint(x), \uext(x), \nu_u(x))\).
The set of approximate jump points of \(u\) will be denoted by \(J_u\).

\smallskip
A $\Haus{N-1}$-measurable set $E\subset\R^N$
is \textsl{countably $\Haus{N-1}$-rectifiable}
if there exist countably many $C^1$ graphs $(\Sigma_i)_{i\in\N}$
such that
$\Haus{N-1}\left(E \setminus \bigcup_i \Sigma_i\right) = 0$.

\subsection{Measures}

The space of all Radon measures on $\Omega$ will be denoted by $\radon$.
 
Given $\mu \in \radon$, its {\sl total variation} $|\mu|$ is 
the nonnegative Radon measure defined by
\[
|\mu|(E) := \sup\left\{ \sum_{h=0}^\infty |\mu(E_h)| \colon \ E_h\ 
\text{$\mu$-measurable sets, pairwise disjoint},\ E=\bigcup_{h=0}^\infty E_h \right\},
\]
for every $\mu$-measurable set $E$
and its {\sl positive} and {\sl negative parts} are defined, respectively, by
\[
\mu^+ := \frac{|\mu|+\mu}{2}, \qquad \mu^- := \frac{|\mu|-\mu}{2}\,.
\]
If $\mu_1, \mu_2\in\radon$, then $\max\{\mu_1, \mu_2\}$
(resp.\ $\min\{\mu_1, \mu_2\}$) is the measure that assigns to every 
Borel set $E\subset\Omega$, the supremum (resp.\ infimum) of
$\mu_1(E_1) + \mu_2(E_2)$ among all pairwise disjoint Borel sets $E_1, E_2$
such that $E_1 \cup E_2 = E$.

Given $\mu\in\radon$ and a $\mu$-measurable set $E$, the \text{restriction} 
$\mu\res E$
is the Radon measure defined by
\[
\mu\res E(B)=\mu(E\cap B), \qquad \forall\ B\ \text{$\mu$-measurable},\ B\subset\Omega.
\]
We recall the following property (see \cite{AFP}, Proposition~2.56
and formula (2.41)):
\begin{equation}\label{f:opn}
E\subset \Omega, \ |\mu|(E)=0\quad  \Longrightarrow\quad  |\mu|(B_r(x))=o(r^{N-1})\ 
\text{for 
$\hh$--a.e.}\ x\in E. 
\end{equation}

Given a nonnegative Borel 
measure $\nu$, we say that $\mu\in\radon$ is 
{\sl absolutely continuous} with respect to $\nu$ (and we write
$\mu \ll \nu$), if $|\mu|(B)=0$ for every set $B$ such that $\nu(B)=0$.

We say that two positive measures $\nu_1$, $\nu_2\in\radon$ are {\sl mutually 
singular}
(and we write $\nu_1 \perp \nu_2$) 
if there exists a Borel set $E$ such that $|\nu_1|(E)=0$ and 
$|\nu_2|(\Omega\setminus E) = 0$.

By the Radon--Nikod\'ym theorem, given a nonnegative Radon measure $\nu$, every 
$\mu\in\radon$ can be uniquely decomposed as $\mu=\mu_1+\mu_2$ with
$\mu_1 \ll \nu$ and $\mu_2 \perp \nu$, and there exists a unique function
(called the density of $\mu$ with respect to $\nu$) 
$\psi_\nu\in L^1(\Omega, \nu)$ such that $\mu_1=\psi_\nu \nu$.
In particular, since $\mu \ll |\mu|$, then there exists
$\psi\in L^1(\Omega, |\mu|)$, with $|\psi|=1$ $|\mu|$--a.e. in $\Omega$, and such 
that  $\mu = \psi |\mu|$. This is usually called the {\sl polar decomposition}
of $\mu$. 

The following lemma shows the relation between the densities of
$\mu$ and $|\mu|$,
where $\mu$ is a Radon measure
absolutely continuous with respect to $\Haus{N-1}$.

\begin{lemma}
\label{r:density}
Let $\mu\ll\Haus{N-1}$ be a Radon measure in $\Omega$,
and let $\mu = \psi |\mu|$ be its polar decomposition.
Then there exists a Borel set $Z\subset\Omega$,
with $|\mu|(Z) = 0$, such that 
every $x\in\Omega\setminus Z$ is a Lebesgue point of $\psi$ with
respect to $|\mu|$, 
and
\begin{equation}\label{f:equald}
\exists\ \lim_{r\searrow 0} \frac{|\mu|(B_r(x))}{r^{N-1}}=L\in\R
\quad\Longleftrightarrow\quad
\exists\ \lim_{r\searrow 0} \frac{\mu(B_r(x))}{r^{N-1}} = \psi(x)\, L.
\end{equation}
\end{lemma}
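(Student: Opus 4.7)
The plan is to reduce the equivalence \eqref{f:equald} to the elementary product identity
\[
\frac{\mu(B_r(x))}{r^{N-1}} = \frac{\mu(B_r(x))}{|\mu|(B_r(x))} \cdot \frac{|\mu|(B_r(x))}{r^{N-1}},
\]
valid whenever $|\mu|(B_r(x))>0$, combined with the fact that the first factor on the right converges to $\psi(x)\neq 0$ at $|\mu|$-a.e.\ $x$. Once this is set up, both directions of \eqref{f:equald} follow at once from the standard behaviour of products and quotients of convergent real sequences, the nonvanishing of $\psi(x)$ being crucial for inverting the product.

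To construct the exceptional set $Z$, I would first invoke the Besicovitch--Lebesgue differentiation theorem for the Radon measure $|\mu|$ and the function $\psi \in L^1_{\rm loc}(\Omega,|\mu|)$ (note $|\psi|=1$ $|\mu|$-a.e.\ by the polar decomposition, so in fact $\psi\in L^\infty$). This produces a Borel set $Z_1$, with $|\mu|(Z_1)=0$, such that every $x\in\Omega\setminus Z_1$ satisfies $|\mu|(B_r(x))>0$ for all $r>0$ and
\[
\lim_{r\to 0^+} \frac{1}{|\mu|(B_r(x))}\int_{B_r(x)} |\psi(y)-\psi(x)|\, d|\mu|(y) = 0.
\]
I would then enlarge $Z_1$ to
\[
Z := Z_1 \cup \bigl(\Omega\setminus\mathrm{supp}(|\mu|)\bigr) \cup \{x\in\Omega : |\psi(x)|\neq 1\},
\]
observing that all three pieces are $|\mu|$-null (by the differentiation theorem, by the definition of support, and by the polar decomposition, respectively).

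For $x\in\Omega\setminus Z$, substituting $\mu=\psi|\mu|$ in the numerator gives
\[
\left|\frac{\mu(B_r(x))}{|\mu|(B_r(x))} - \psi(x)\right| \leq \frac{1}{|\mu|(B_r(x))}\int_{B_r(x)} |\psi(y)-\psi(x)|\, d|\mu|(y),
\]
and by the Lebesgue-point property the right-hand side tends to $0$ as $r\to 0^+$. Hence the first factor in the product identity converges to $\psi(x)$, which is nonzero because $|\psi(x)|=1$. Feeding this back into the identity transfers convergence in either direction of \eqref{f:equald}: from $|\mu|(B_r(x))/r^{N-1}\to L$ one reads off $\mu(B_r(x))/r^{N-1}\to \psi(x)L$, and conversely, since $\psi(x)\neq 0$, the convergence $\mu(B_r(x))/r^{N-1}\to \psi(x)L$ forces $|\mu|(B_r(x))/r^{N-1}\to L$.

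I do not expect a substantive obstacle: the statement is essentially a packaged consequence of Besicovitch differentiation. The only genuine care required lies in the choice of $Z$: excluding $\{|\psi|\neq 1\}$ is what prevents the reverse implication from collapsing at zeros of $\psi$, and excluding $\Omega\setminus\mathrm{supp}(|\mu|)$ is what makes the Lebesgue-point formulation meaningful. With these incorporated, the verification reduces to the one-line multiplicative argument above.
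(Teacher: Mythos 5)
Your proof is correct, and it closes the argument by a slightly different mechanism than the paper does. Both proofs rest on the same core tool --- the Besicovitch differentiation theorem, which makes $|\mu|$-a.e.\ point a Lebesgue point of $\psi$ and hence gives $\mu(B_r(x))/|\mu|(B_r(x))\to\psi(x)$ with $|\psi(x)|=1$ --- and on essentially the same integral estimate. The difference is in how the reverse implication is handled. The paper keeps the estimate in the form
\[
\left|\frac{|\mu|(B_r(x))}{r^{N-1}}-\psi(x)\,\frac{\mu(B_r(x))}{r^{N-1}}\right|
\le\frac{|\mu|(B_r(x))}{r^{N-1}}\,\mean{B_r(x)}|\psi(y)-\psi(x)|\,d|\mu|(y),
\]
so to send the right-hand side to zero it must know that $\limsup_{r}|\mu|(B_r(x))/r^{N-1}<+\infty$; this is why the paper enlarges $Z$ by the set $Z_1$ where the upper $(N-1)$-density is infinite, and it is exactly here that the hypothesis $\mu\ll\Haus{N-1}$ is used (to conclude $|\mu|(Z_1)=0$ from $\Haus{N-1}(Z_1)=0$). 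You instead divide by $|\mu|(B_r(x))$ and invoke the algebra of limits for products and quotients with nonvanishing limit of the denominator; this treats both implications symmetrically, never requires boundedness of the upper density, and therefore never uses $\mu\ll\Haus{N-1}$ at all --- your version of the lemma holds for an arbitrary Radon measure. The only points of care, which you address, are restricting to $\mathrm{supp}(|\mu|)$ so that the quotient $\mu(B_r(x))/|\mu|(B_r(x))$ is defined, and observing that for small $r$ one has $|\mu(B_r(x))|\ge\tfrac12|\mu|(B_r(x))>0$, so that the inverted quotient used in the reverse implication is also legitimate.
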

\begin{proof} 
Let $A\subset\Omega$ be the set of Lebesgue points of $\psi$ with
respect to $|\mu|$.
By \cite[Corollary~2.23]{AFP}, we have that $|\mu|(\Omega\setminus A) = 0$.
Since $|\psi| = 1$ $|\mu|$-a.e., it is not restrictive to assume that
\[
|\psi(x)| = 1,
\quad
\lim_{r\searrow 0} \frac{1}{|\mu|(B_r(x))}
\int_{B_r(x)} |\psi(y) - \psi(x)|\, d|\mu| = 0,
\qquad \forall x\in A.
\]
Moreover, from \cite[Theorem~2.56 and (2.40)]{AFP}, the set
\[
Z_1 := \left\{
x\in\Omega:\
\limsup_{r\searrow 0} \frac{|\mu|(B_r(x))}{r^{N-1}} = +\infty
\right\}
\]
has zero $\hh$-measure, hence also $|\mu|(Z_1) = 0$.

If we set $Z := (\Omega\setminus A)\cup Z_1$, then
$|\mu|(Z) = 0$ and \eqref{f:equald} holds in $\Omega\setminus Z$.
Specifically, given $x\in \Omega\setminus Z$,
$B_r(x) \subset\Omega$ and $\varphi\in C_c(\Omega)$ with
support in $B_r(x)$,
since $|\psi(x)| = 1$,
we have that
$|1 - \psi(y)\psi(x)| = |\psi(y) - \psi(x)|$,
and hence
\[
\begin{split}
\left|
\int_\Omega \varphi\, d|\mu| - \psi(x) \int_\Omega\varphi\, d\mu
\right|
& =
\left|
\int_{B_r(x)} \varphi(y) [ 1 - \psi(y)\psi(x)]\, d|\mu|(y)
\right|
\\ & \leq 
\|\varphi\|_\infty 
|\mu|(B_r(x))\,
\mean{B_r(x)} |\psi(y) - \psi(x)|\, d|\mu|(y)\,.
\end{split}
\]
Taking $\varphi \nearrow \chi_{B_r(x)}$ and dividing
by $r^{N-1}$ we finally get
\[
\left|
\frac{|\mu|(B_r(x))}{r^{N-1}} -
\psi(x)\,\frac{\mu(B_r(x))}{r^{N-1}}
\right|
\leq
\frac{|\mu|(B_r(x))}{r^{N-1}}\,
\mean{B_r(x)} |\psi(y) - \psi(x)|\, d|\mu|(y)\,,
\]
hence \eqref{f:equald} follows because $x\not\in Z_1$ and
$x$ is a Lebesgue point of $\psi$.
\end{proof}

Given $\mu\in\radon$, we denote by $\mu = \mu^a + \mu^s$ its
Lebesgue decomposition in the absolutely continuous part
$\mu^a \ll \LLN$ and the singular part $\mu^s\perp\LLN$.
We recall a relevant decomposition result for $\mu^s$ (see \cite{ADM}, 
Proposition 5). 

\begin{proposition}\label{p:decmu}
If $\mu\in\radon$ is such that $\mu^s \ll \hh$, then $\mu^s$ can be uniquely 
decomposed as the sum $\mu^j+\mu^c$, where $\mu^j$, $\mu^c\in \radon$ 
are two mutually singular measures
having the following properties:
\begin{itemize}
\item[(i)] $\mu^c(B)=0$ for every $B$ such that $\hh(B)<+\infty$;
\item[(ii)] the set
\[
\Theta_\mu:=\left\{ x\in\Omega \colon \limsup_{r \to 0+}
\frac{|\mu|(B_r(x))}{r^{N-1}}>0\right\}
\]
is a Borel set, $\sigma$--finite with respect to $\hh$; 
\item[(iii)] there exists $f\in L^1(\Theta_\mu,\hh\res \Theta_\mu)$ such that 
$\mu^j=f\, \hh\res \Theta_\mu$.
\end{itemize}
The measures $\mu^j$, $\mu^c$ are called {\sl jump part} and {\sl Cantor part} 
of the measure $\mu$, while $\Theta_\mu$ is called {\sl jump set} of $\mu$.
\end{proposition}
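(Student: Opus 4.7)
The plan is to define the two measures explicitly by splitting $\mu^s$ along the set $\Theta_\mu$ of positive upper $(N-1)$-density, namely
\[
\mu^j := \mu^s \res \Theta_\mu, \qquad \mu^c := \mu^s \res (\Omega \setminus \Theta_\mu),
\]
and then verify the three properties using standard differentiation theory for Radon measures against $\hh$. By construction these are mutually singular Radon measures with sum $\mu^s$, and $\Theta_\mu$ is Borel since $x \mapsto |\mu|(B_r(x))$ is Borel for each fixed $r$ and the relevant $\limsup$ can be computed along rational radii.

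To prove (ii), I would exhaust $\Theta_\mu = \bigcup_{k\in\N} \Theta_\mu^k$ via the super-level sets
\[
\Theta_\mu^k := \Bigl\{x \in \Omega :\ \limsup_{r\to 0^+} \frac{|\mu|(B_r(x))}{r^{N-1}} \geq \frac{1}{k}\Bigr\},
\]
and apply the classical comparison estimate (see \cite[Theorem~2.56]{AFP}): the assumption that the upper $(N-1)$-density of $|\mu|$ is at least $1/k$ on $\Theta_\mu^k$ forces $\hh(\Theta_\mu^k \cap A) \leq C_N\, k\, |\mu|(A)$ for every open $A \Subset \Omega$. Since $\mu$ is Radon and $\Omega$ admits an exhaustion by relatively compact open sets, each $\Theta_\mu^k$ is $\sigma$-finite with respect to $\hh$, hence so is $\Theta_\mu$.

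For (i), note that outside $\Theta_\mu$ the upper $(N-1)$-density of $|\mu|$ (and hence of $|\mu^s|$) vanishes by construction; combining this with the assumption $\mu^s \ll \hh$, the converse direction of the same comparison estimate forces $|\mu^c|(B) = |\mu^s|(B) = 0$ for every Borel $B \subset \Omega \setminus \Theta_\mu$ with $\hh(B) < +\infty$. Property (iii) is then an application of the Radon--Nikod\'ym theorem: since $\mu^j \ll \hh$ is concentrated on $\Theta_\mu$, which by (ii) is $\sigma$-finite with respect to $\hh \res \Theta_\mu$, there exists a density $f$ with $\mu^j = f\, \hh \res \Theta_\mu$, locally integrable and belonging to $L^1$ on every subset of finite $\hh$-measure.

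Uniqueness would follow from the characterizations themselves. Any other candidate $\nu^c$ verifying (i) must vanish on the $\sigma$-finite set $\Theta_\mu$ (so $\nu^c \leq \mu^c$); conversely, any other $\nu^j$ of the form (iii) is concentrated on a Borel $\hh$-$\sigma$-finite set which, by the differentiation argument applied to its density, must coincide with $\Theta_\mu$ up to $\hh$-null sets. The main obstacle is the geometric comparison estimate between $\hh$ and $|\mu|$ on the density-level sets; once that is in hand, everything else reduces to abstract measure-theoretic consequences of $\mu^s \ll \hh$ and $\sigma$-finiteness.
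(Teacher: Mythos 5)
The paper does not actually prove this proposition: it is recalled from \cite{ADM}, Proposition~5, so there is no internal proof to compare against. Your argument is correct and is essentially the standard one behind that reference: define $\mu^j:=\mu^s\res\Theta_\mu$ and $\mu^c:=\mu^s\res(\Omega\setminus\Theta_\mu)$, obtain $\sigma$-finiteness of $\Theta_\mu$ and property (i) from the two halves of the density comparison theorem \cite[Theorem~2.56]{AFP} applied to $|\mu|$, and use the Radon--Nikod\'ym theorem with respect to the $\sigma$-finite measure $\hh\res\Theta_\mu$ (here $\mu^s\ll\hh$ is what guarantees $\mu^j\ll\hh\res\Theta_\mu$, and finiteness of $\mu^j$ gives $f\in L^1$ rather than merely $L^1_{\rm loc}$). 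The only imprecise spot is the uniqueness paragraph: the carrier of a competitor $\nu^j$ need \emph{not} coincide with $\Theta_\mu$ up to $\hh$-null sets (its density may vanish on part of $\Theta_\mu$, or $\nu^j$ may even be zero), and ``$\nu^c\leq\mu^c$'' is not meaningful for signed measures; the clean argument is simply that any $\nu^c$ satisfying (i) vanishes on the $\hh$-$\sigma$-finite set $\Theta_\mu$ while any $\nu^j$ satisfying (iii) is concentrated on it, so restricting $\mu^s=\nu^j+\nu^c$ to $\Theta_\mu$ yields $\nu^j=\mu^s\res\Theta_\mu=\mu^j$ and hence $\nu^c=\mu^c$.
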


\subsection{Functions of bounded variation}
\label{ss:BV}

We say that \(u\in L^1(\Omega)\) is a \textsl{function of bounded variation} in 
\(\Omega\)
if the distributional derivative \(Du\) of \(u\) is a finite Radon measure in 
\(\Omega\).
The vector space of all functions of bounded variation in \(\Omega\)
will be denoted by \(BV(\Omega)\).
Moreover, we will denote by \(BV_{{\rm loc}}(\Omega)\) the set of functions
\(u\in L^1_{{\rm loc}}(\Omega)\) that belongs to 
\(BV(A)\) for every open set \(A\Subset\Omega\)
(i.e., the closure \(\overline{A}\) of \(A\) is a compact
subset of \(\Omega\)).

If \(u\in BV(\Omega)\), then \(Du\) can be decomposed as
the sum of the absolutely continuous and the singular part with respect
to the Lebesgue measure, i.e.\
\[
Du = D^a u + D^s u,
\qquad D^a u = \nabla u \, \LLN,
\]
where \(\nabla u\) is the \textsl{approximate gradient} of \(u\),
defined \(\LLN\)-a.e.\ in \(\Omega\)
(see \cite[Section~3.9]{AFP}).
The jump set $J_u$ has the following properties:
it is countably $\H^{N-1}$--rectifiable
and 
$\H^{N-1}(S_u  \setminus J_u) = 0$
(see \cite[Definition~2.57 and Theorem~3.78]{AFP});
it is contained in the set $\Theta_{Du}$ defined in
Proposition~\ref{p:decmu}(ii) with $\mu = Du$,
and $\Haus{N-1}(\Theta_{Du}\setminus J_u) = 0$
(see \cite[Proposition~3.92(b)]{AFP}).
By Proposition~\ref{p:decmu},
the singular part \(D^s u\) can be further decomposed
as the sum of its Cantor and jump part, i.e.
$D^s u = D^c u + D^j u$,
$D^c u := D^s u \res (\Omega\setminus S_u)$,
and
\[
D^j u := D^s u \res J_u = (\prec[i]{u} - \prec[e]{u}) \, \nu_u\, \Haus{N-1}\res J_u.
\]
We denote by \(D^d u := D^a u + D^c u\) the diffuse part of the measure 
\(Du\).

At every point $x\in J_u$ we have that
$-\infty < \umeno(x) < \upiu(x) < +\infty$ and
\[
\umeno(x) =\min\{\uint(x),\uext(x)\}, \qquad 
\upiu(x) =\max\{\uint(x),\uext(x)\},
\qquad x\in J_u.
\]
Moreover, we can always choose an orientation on $J_u$
such that $\uint = \upiu$ on $J_u$
(see \cite[\S 4.1.4, Theorem~2]{GMS1}).
In the following we shall always extend the functions $\uint,\uext$ to
\(\Omega\setminus(S_u\setminus J_u)\) by setting
\[
\uint=\uext=\widetilde{u}\quad \text{in}\ \Omega\setminus S_u.
\]

Given a Borel function $\lambda \colon \Omega \to [0,1]$, the {\sl 
$\lambda$--representative} of $u\in BV_{\rm loc}(\Omega)$ is defined by
\begin{equation}\label{f:pr}
\prec{u}(x):=
\begin{cases}
\tilde{u}(x), & x\in \Omega \setminus S_u, \\
(1-\lambda(x))\umeno(x)+\lambda(x)\upiu(x), & x\in J_u.
\end{cases}
\end{equation} 
When $\lambda(x) = 1/2$ for every $x\in\Omega$,
the $\lambda$--representative coincides with the {\sl precise representative} $u^* := (\upiu + \umeno)/2$ of $u$.

\smallskip
Let \(E\) be an \(\LLN\)-measurable subset of \(\R^N\).
For every open set \(\Omega\subset\R^N\) the \textsl{perimeter} \(P(E, \Omega)\)
is defined by
\[
P(E, \Omega) := \sup\left\{
\int_E \Div \varphi\, dx:\ \varphi\in C^1_c(\Omega, \R^N),\ 
\|\varphi\|_\infty\leq 1
\right\}.
\]
We say that \(E\) is of \textsl{finite perimeter} in \(\Omega\) if \(P(E, \Omega) < 
+\infty\).

Denoting by \(\chi_E\) the characteristic function of \(E\),
if \(E\) is a set of finite perimeter in \(\Omega\), then
\(D\chi_E\) is a finite Radon measure in \(\Omega\) and
\(P(E,\Omega) = |D\chi_E|(\Omega)\).

If \(\Omega\subset\R^N\) is the largest open set such that \(E\)
is locally of finite perimeter in \(\Omega\),
we call \textsl{reduced boundary} \(\partial^* E\) of \(E\) the set of all points
\(x\in \Omega\) in the support of \(|D\chi_E|\) such that the limit
\[
\nuint_E(x) := \lim_{\rho\to 0^+} 
\frac{D\chi_E(B_\rho(x))}{|D\chi_E|(B_\rho(x))}
\]
exists in \(\R^N\) and satisfies \(|\nuint_E(x)| = 1\).
The function \(\nuint_E\colon\partial^* E\to S^{N-1}\) is called
the \textsl{measure theoretic unit interior normal} to \(E\).

A fundamental result of De Giorgi (see \cite[Theorem~3.59]{AFP}) states that
\(\partial^* E\) is countably \((N-1)\)-rectifiable
and \(|D\chi_E| = \hh\res \partial^* E\).
If \(E\) has finite perimeter in \(\Omega\), Federer's structure theorem
states that
\(\partial^* E\cap\Omega \subset E^{1/2} \subset \partial^e E\)
and \(\H^{N-1}(\Omega\setminus(E^0\cup \partial^e E \cup E^1)) = 0\)
(see \cite[Theorem~3.61]{AFP}).

\subsection{Divergence--measure fields }
\label{ss:div}

We will denote by \(\DM(\Omega)\) the space of all
vector fields 
\(\A\in L^\infty(\Omega, \R^N)\)
whose divergence in the sense of distributions is a finite Radon measure in 
\(\Omega\), acting as
\[
\int_\Omega \varphi\, d\Div\A = -\int_{\Omega} \A\cdot\nabla\varphi\, dx \qquad
\forall \varphi\in\cinftio.
\]
Similarly, \(\DMlocloc[\Omega]\) will denote the space of
all vector fields \(\A\in L^\infty_{{\rm loc}}(\Omega, \R^N)\)
whose divergence in the sense of distributions is a Radon measure in 
\(\Omega\). 

The basic properties of these vector fields are collected in the following 
proposition.

\begin{proposition}\label{p:basicVF}
Let \(\A\) be a vector field belonging to \(\DMloc[\Omega]\), and let 
$\jump{\A}$ be the jump set of the measure $\mu = |\Div\A|$,
defined in Proposition~\ref{p:decmu}(ii).
Then the following hold.
\begin{itemize}
\item[(i)] \(|\Div\A| \ll \hh\);
\item[(ii)] $\jump{\A}$ is a Borel set, \(\sigma\)-finite with respect to 
\(\hh\);
\item[(iii)]\(
\Div\A = \Div^a\A + \Div^c\A + \Div^j\A,
\)
where \(\Div^a\A\) is absolutely continuous with respect to \(\LLN\),
\(\Div^c\A (B) = 0\) for every set \(B\) with \(\hh(B) < +\infty\),
and there exists $f\in L^1(\jump{\A}, \Haus{N-1}\res \jump{\A})$
such that
\(
\Div^j\A = f\, \hh\res\jump{\A}
\).
\end{itemize}
\end{proposition}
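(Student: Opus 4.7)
The strategy is to reduce (ii) and (iii) to direct applications of Proposition~\ref{p:decmu}, so the real content of the statement is the absolute continuity assertion (i), which is the classical Chen--Frid result. I would invoke (i) directly from \cite{ChenFrid} (see also \cite{Silh}): for any $\A \in \DMlocloc[\Omega]$ one has $|\Div\A| \ll \hh$. The underlying argument is a covering one. After localization, given a Borel set $E \subset \Omega$ with $\hh(E)=0$ and $\eta>0$, one covers $E$ by countably many balls $B_{r_i}(x_i) \Subset \Omega$ with $\sum_i r_i^{N-1}<\eta$, and then controls $|\Div\A|$ on each such ball via the duality $\int \varphi\, d\Div\A = -\int \A\cdot\nabla\varphi\, dx$ tested against suitable radial cut-offs. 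The key cancellation, which rescues the naive bound $|\int\A\cdot\nabla\varphi|\le\|\A\|_\infty\|\nabla\varphi\|_{L^1}$ (too weak in general, since $\|\nabla\varphi\|_{L^1}$ is unbounded over $|\varphi|\le 1$), comes from choosing $\varphi$ monotone in the radial variable so that its gradient concentrates on a thin spherical shell of $\LLN$-measure $\sim r_i^{N-1}\eps$ and magnitude $\sim 1/\eps$, producing the correct $r_i^{N-1}$ scaling; summing gives $|\Div\A|(E)\le C_N\|\A\|_\infty\,\eta$, and letting $\eta\to 0^+$ proves (i).

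Once (i) is in hand, (ii) and (iii) become immediate consequences of Proposition~\ref{p:decmu} applied to $\mu := \Div\A$. The Lebesgue decomposition $\mu = \mu^a + \mu^s$ with respect to $\LLN$ yields the absolutely continuous part $\Div^a\A := \mu^a$. By (i), $\mu^s \ll \hh$, so the hypothesis of Proposition~\ref{p:decmu} is satisfied. Its item (ii) gives the Borel measurability and $\hh$-$\sigma$-finiteness of $\jump{\A} = \jump{\mu}$, and its item (iii) provides the splitting $\mu^s = \Div^c\A + \Div^j\A$ together with the representation
\[
\Div^j\A = f\, \hh\res\jump{\A} \qquad \text{for some } f \in L^1(\jump{\A}, \hh\res\jump{\A}),
\]
which is precisely the content of (iii).

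The main obstacle is (i) itself: the ball estimate behind it is subtle because one cannot pass $|\cdot|$ through the test-function duality without losing the correct scaling, and a truly sharp $(N-1)$-dimensional density bound requires exploiting the radial geometry of the cut-offs (or, equivalently, the weak normal trace theory for $\DM$-fields on spheres). However, this is a standard prerequisite for the theory of divergence-measure fields and is carried out in full detail in \cite{ChenFrid} and \cite{Silh}, so I would simply invoke it by citation rather than reproving it, since the novel ingredients of the present paper lie in the material of the subsequent sections.
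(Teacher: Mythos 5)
Your proposal is correct and follows exactly the paper's own route: the authors likewise obtain (i) by citing \cite[Proposition 3.1]{ChenFrid} and then deduce (ii) and (iii) by applying Proposition~\ref{p:decmu} to $\Div\A$, whose singular part is $\ll\hh$ by (i). Your additional sketch of the covering/radial cut-off argument behind the Chen--Frid estimate is a faithful account of that proof, but, as you note, it is invoked by citation in the paper as well.
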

\begin{proof}
The main property (i) is proved in \cite[Proposition 3.1]{ChenFrid}.
The decomposition then follows from Proposition \ref{p:decmu}.
\end{proof}

\subsection{Weak normal traces}
\label{distrtraces}
In what follows, we
will deal with the traces of the normal component of a vector field \(\A\in 
\DMloc\) on a countably \(\H^{N-1}\)--rectifiable set
\(\Sigma\subset\Omega\).
In order to fix the notation, we briefly recall the construction given in 
\cite{AmbCriMan} (see Propositions~3.2, 3.4 and Definition~3.3).

Given a domain \(\Omega'\Subset\Omega\) of class \(C^1\), 
the trace of the normal component of \(\A\) on \(\partial\Omega'\) 
is the distribution defined by
\begin{equation}\label{f:disttr}
\pscal{\Trace[]{\A}{\partial\Omega'}}{\varphi}
:= \int_{\Omega'} \A\cdot \nabla\varphi\, dx + \int_{\Omega'} \varphi\, d\Div\A,
\qquad
\forall\varphi\in C^\infty_c(\Omega).
\end{equation}
It turns out that this distribution is induced by an \(L^\infty\) function on 
\(\partial\Omega'\),
still denoted by \(\Trace[]{\A}{\partial\Omega'}\), and
\begin{equation}\label{f:infestrace}
\|\Trace[]{\A}{\partial\Omega'}\|_{L^\infty(\partial\Omega', \Haus{N-1}\res \partial\Omega')}
\leq \|\A\|_{L^\infty(\Omega')}.
\end{equation}

Given a countably $\hh$--rectifiable set \(\Sigma\), there exist a covering
\((\Sigma_i)_{i\in\N}\) of \(\Sigma\) and  
Borel sets $N_i\subseteq \Sigma_i$ with the following properties:
\begin{itemize}
\item[(R1)]  \(\Sigma_i\) is an \textsl{oriented} \(C^1\) hypersurface,
with (classical) normal vector field \(\nu_{\Sigma_i}\);
\item[(R2)] \(N_i\subseteq \Sigma_i\) are pairwise disjoint Borel sets
such that \(\hh(\Sigma\setminus \bigcup_i N_i) = 0\); 
\item[(R3)] for every $i\in\N$, there exist two open bounded sets \(\Omega_i, \Omega'_i\) with 
\(C^1\) boundary
and exterior normal vectors \(\nu_{\Omega_i}\) and \(\nu_{\Omega_i'}\) 
respectively,
such that
\(N_i\subseteq \partial\Omega_i \cap \partial\Omega'_i\),
and
\[
\nu_{\Sigma_i}(x) = \nu_{\Omega_i}(x) = -\nu_{\Omega'_i}(x)
\qquad \forall x\in N_i.
\]
\end{itemize} 

We can fix an orientation on \(\Sigma\), given by
\[
\nu_{\Sigma}(x) := \nu_{\Sigma_i}(x), \qquad \hh-\text{a.e. on}\  N_i
\]
and the normal traces of \(\A\) on \(\Sigma\) 
are defined by
\[
\Trm{\A}{\Sigma} := \Tr(\A, \partial\Omega_i),
\quad
\Trp{\A}{\Sigma} := -\Tr(\A, \partial\Omega'_i),
\qquad
\hh-\text{a.e.\ on}\ N_i.
\]
By a deep localization property proved in \cite[Proposition 3.2]{AmbCriMan},
these definitions are independent of the choice
of $\Sigma_i$ and $N_i$. 
In what follows, the pair $(\Sigma, \nu_\Sigma)$ (or, simply, $\Sigma)$ will be called and oriented countably $\hh$-rectifiable set.

We remark that, the normal traces belong to
\(L^{\infty}(\Sigma, \H^{N-1}\res\Sigma)\) 
and
\begin{equation}\label{f:trA}
\Div \A \res\Sigma =
\left[\Trp{\A}{\Sigma} - \Trm{\A}{\Sigma}\right]
\, {\mathcal H}^{N-1} \res\Sigma
\end{equation}
(see \cite[Proposition~3.4]{AmbCriMan}).
In particular, by \eqref{f:infestrace},
$|\Div\A|(\Sigma) \leq \|\A\|_\infty \Haus{N-1}(\Sigma)$.

\begin{remark}\label{r:diffnt}
We observe that, if $\Sigma$ is oriented by a normal vector field
$\nu$ and $\Sigma'$ is the same set oriented by $\nu' := -\nu$,
then
\[
\Trm{\A}{\Sigma'} = -\Trp{\A}{\Sigma},
\quad
\Trp{\A}{\Sigma'} := -\Trm{\A}{\Sigma},
\]
so that the difference
$\Trp{\A}{\Sigma} - \Trm{\A}{\Sigma}$
is independent of the choice of the orientation on $\Sigma$.
\end{remark}

The following result is a consequence of \eqref{f:trA} and will be used in the study of
the semicontinuity of the generalized pairing
(see Theorem~\ref{t:lsc}).

\begin{theorem}
\label{t:density}
Let $\A\in\DMloc$, 
let $\Div\A = \psi_{\A} |\Div\A|$ be the polar decomposition
of the measure $\Div\A$,
and let $\Sigma\subset\Omega$ be an oriented
countably $\hh$-rectifiable set.
Then
\begin{gather}
\Trp{\A}{\Sigma}(x) - \Trm{\A}{\Sigma}(x)
=
\lim_{r\searrow 0} \frac{\Div\A\, (B_r(x))}{\omega_{N-1} r^{N-1}}\,,
\qquad
\text{for $\hh$-a.e.}\ x\in\Sigma\,,
\label{f:density}\\
\Trp{\A}{\Sigma}(x) - \Trm{\A}{\Sigma}(x)
=
\psi_{\A}(x)\,\lim_{r\searrow 0} \frac{|\Div\A|(B_r(x))}{\omega_{N-1} r^{N-1}}\,,
\quad
\text{for $\hh$-a.e.}\ x\in\Sigma\,.
\label{f:density2}
\end{gather}
\end{theorem}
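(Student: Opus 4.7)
The starting point is formula \eqref{f:trA}, which already gives
\[
\Div\A\res\Sigma = [\Trp{\A}{\Sigma} - \Trm{\A}{\Sigma}]\,\hh\res\Sigma,
\]
so the difference of traces is precisely the density of $\Div\A\res\Sigma$ with respect to $\hh\res\Sigma$. The plan is therefore to split
\[
\Div\A = \Div\A\res\Sigma + \Div\A\res(\Omega\setminus\Sigma)
\]
and show that, at $\hh$-a.e.\ point of $\Sigma$, the first summand contributes exactly $\Trp{\A}{\Sigma}(x) - \Trm{\A}{\Sigma}(x)$ to the limit in \eqref{f:density}, while the second summand gives $o(r^{N-1})$.

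For the contribution of $\Div\A\res\Sigma$, I would combine two standard facts. First, since $\Sigma$ is countably $\hh$-rectifiable, at $\hh$-a.e.\ $x\in\Sigma$ one has $\hh(\Sigma\cap B_r(x)) = \omega_{N-1} r^{N-1}(1+o(1))$ as $r\searrow 0$. Second, the function $\Trp{\A}{\Sigma} - \Trm{\A}{\Sigma}$ belongs to $L^\infty(\Sigma, \hh\res\Sigma)$ by the bound in Subsection~\ref{distrtraces}, so $\hh\res\Sigma$-a.e.\ $x\in\Sigma$ is a Lebesgue point of this function with respect to $\hh\res\Sigma$. Writing
\[
\frac{\Div\A\res\Sigma(B_r(x))}{\omega_{N-1}r^{N-1}}
= \frac{\hh(\Sigma\cap B_r(x))}{\omega_{N-1}r^{N-1}}\cdot
\mean{\Sigma\cap B_r(x)} [\Trp{\A}{\Sigma} - \Trm{\A}{\Sigma}]\, d\hh
\]
and passing to the limit gives the desired value at $\hh$-a.e.\ $x\in\Sigma$.

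The contribution of $\Div\A\res(\Omega\setminus\Sigma)$ is handled by the property \eqref{f:opn} of Radon measures. Applying it to the Radon measure $\mu := |\Div\A|\res(\Omega\setminus\Sigma)$ and to the set $E=\Sigma$, which satisfies $\mu(E)=0$ by construction, yields
\[
|\Div\A|(B_r(x)\setminus\Sigma) = \mu(B_r(x)) = o(r^{N-1})
\quad\text{for $\hh$-a.e.\ } x\in\Sigma,
\]
so this second term does not affect the limit. Putting the two pieces together gives \eqref{f:density}. I expect this to be the main (though not deep) obstacle, since one has to be careful that \eqref{f:opn} is invoked with a measure vanishing on $\Sigma$, not with $|\Div\A|$ itself.

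For \eqref{f:density2}, I would apply exactly the same scheme to $|\Div\A|$ in place of $\Div\A$: formula \eqref{f:trA} together with its consequence $|\Div\A|\res\Sigma = |\Trp{\A}{\Sigma} - \Trm{\A}{\Sigma}|\,\hh\res\Sigma$, the rectifiability of $\Sigma$, Lebesgue differentiation for the $L^\infty$ density, and \eqref{f:opn} for the complementary part, to conclude that
\[
\lim_{r\searrow 0} \frac{|\Div\A|(B_r(x))}{\omega_{N-1}r^{N-1}} = |\Trp{\A}{\Sigma}(x) - \Trm{\A}{\Sigma}(x)|
\]
at $\hh$-a.e.\ $x\in\Sigma$. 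Now \eqref{f:density2} follows either by directly combining this with \eqref{f:density} and the relation $\Div\A = \psi_{\A}|\Div\A|$ (distinguishing the points where the right-hand side is zero, where both sides of \eqref{f:density2} vanish, from those where it is nonzero, at which $\psi_{\A}$ is recovered as the ratio of the two limits), or by invoking Lemma~\ref{r:density} with $\mu = \Div\A$ and noting that its exceptional set $Z$, having $|\Div\A|(Z)=0$, can be absorbed into the exceptional set already discarded on $\Sigma$ via \eqref{f:opn}.
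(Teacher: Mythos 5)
Your proposal is correct and takes essentially the same route as the paper: decompose $\Div\A$ into $\Div\A\res\Sigma$ plus its restriction to the complement, handle the complementary part via \eqref{f:opn} (applied, as you rightly insist, to the measure $|\Div\A|\res(\Omega\setminus\Sigma)$ with $E=\Sigma$) and the part on $\Sigma$ via \eqref{f:trA} together with differentiation of the $L^\infty$ density. For \eqref{f:density2} the paper likewise splits $\Sigma$ into the points where $\lim_r |\Div\A|(B_r(x))/r^{N-1}$ vanishes (where both sides are zero) and those where it is positive, invoking Lemma~\ref{r:density} on the latter and absorbing its exceptional set exactly as in your second alternative.
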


\begin{proof}
From \eqref{f:opn} with $\mu := |\Div\A|\res\Sigma$
and $E:= \Omega\setminus\Sigma$,
we have that
\[
\lim_{r\searrow 0} \frac{|\Div\A|\res(\R^N\setminus\Sigma)\, (B_r(x))}{\omega_{N-1} r^{N-1}} = 0\,,
\qquad
\text{for $\Haus{N-1}$-a.e.\ $x\in\Sigma$}.
\]
On the other hand, by \eqref{f:trA}
\[
\lim_{r\searrow 0} \frac{\Div\A\res\Sigma\, (B_r(x))}{\omega_{N-1} r^{N-1}}
= \Trp{\A}{\Sigma}(x) - \Trm{\A}{\Sigma}(x)\,,
\qquad
\text{for $\Haus{N-1}$-a.e.\ $x\in\Sigma$}\,,
\]
and hence \eqref{f:density} holds.

Let us define the sets
\begin{gather*}
\Sigma' :=
\left\{
x\in \Sigma:\ \exists
\lim_{r\searrow 0} \frac{|\Div\A|(B_r(x))}{\omega_{N-1} r^{N-1}} = 0
\right\}\,,\\
\Sigma'' :=
\left\{
x\in \Sigma:\ \exists
\lim_{r\searrow 0} \frac{|\Div\A|(B_r(x))}{\omega_{N-1} r^{N-1}} > 0
\right\}\,.
\end{gather*}
By Proposition~\ref{p:basicVF}(i) and \cite[Theorems~2.22 and~2.83]{AFP}, 
we infer that
$\Haus{N-1}(\Sigma\setminus(\Sigma'\cup\Sigma'')) = 0$.
From  \eqref{f:density} and Lemma~\ref{r:density}
we deduce that the equality in \eqref{f:density2} holds 
for $\Haus{N-1}$-a.e.\  $x\in\Sigma''$.
On the other hand, from \eqref{f:density} we deduce that
$\Trp{\A}{\Sigma}(x) - \Trm{\A}{\Sigma}(x) = 0$ for
$\Haus{N-1}$-a.e.\ $x\in\Sigma'$,
hence \eqref{f:density2} follows.
\end{proof}

For later use, we recall here a result
proved in \cite[Proposition~3.1]{CD3}.

\begin{proposition}\label{p:tracesb}
	Let \(\A\in\DMloc\),  \(u\in\BVLloc\) and let \(\Sigma\subset\Omega\) 
	be an oriented countably \(\hh\)--rectifiable set.
	Then \(u\A\in\DMloc\) and the normal traces of \(u\A\) on \(\Sigma\) are 
	given by
\[
	\begin{split}
		\Trm{u\A}{\Sigma} = 
		& \begin{cases}
			\uext \Trm{\A}{\Sigma},
			& \hh-\text{a.e.\ in}\ J_u\cap\Sigma,\\
			\ut\, \Trm{\A}{\Sigma},
			& \hh-\text{a.e.\ in}\ \Sigma\setminus J_u. 	
		\end{cases} \\ 
		\Trp{u\A}{\Sigma} =
		&\begin{cases}
					\uint \Trp{\A}{\Sigma},
					& \hh-\text{a.e.\ in}\ J_u\cap\Sigma,\\
					\ut\, \Trp{\A}{\Sigma},
					& \hh-\text{a.e.\ in}\ \Sigma\setminus J_u. 	
				\end{cases} 
	\end{split}
\]
\end{proposition}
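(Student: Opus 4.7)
The plan is to first verify that $u\A \in \DMloc$, then compute the normal traces by a localization to $C^1$ pieces of $\Sigma$ where a product rule for traces applies.

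First, to show $u\A \in \DMloc$, I would regularize by convolution: for $u_\varepsilon := u * \rho_\varepsilon \in C^\infty \cap L^\infty_{\mathrm{loc}}$, the classical product rule gives $\Div(u_\varepsilon \A) = u_\varepsilon \Div \A + \A \cdot \nabla u_\varepsilon$, and the right-hand side is a locally finite Radon measure with total variation bounded on compacts (uniformly in $\varepsilon$) by $\|u\|_\infty |\Div\A| + \|\A\|_\infty |Du_\varepsilon|$. Passing to the limit in the distributional identity against test functions, using $L^1_{\mathrm{loc}}$ convergence of $u_\varepsilon$ to $u$ and the uniform total variation bound, yields that $\Div(u\A)$ is a Radon measure on $\Omega$.

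Next, fix a covering $(\Sigma_i, N_i, \Omega_i, \Omega_i')$ of $\Sigma$ as in (R1)--(R3). By the localization property \cite[Proposition~3.2]{AmbCriMan}, it suffices to show, for each $i$ and $\hh$-a.e.\ $x \in N_i$, the pointwise product formula
\[
\Tr(u\A, \partial\Omega_i)(x) = u^{\mathrm{int}}_{\partial\Omega_i}(x)\, \Tr(\A, \partial\Omega_i)(x),
\]
where $u^{\mathrm{int}}_{\partial\Omega_i}$ denotes the standard BV interior trace of $u$ on the $C^1$ boundary $\partial\Omega_i$, and analogously for $\Omega_i'$. To prove this I would approximate: by BV trace theory there exist $u_k \in C^\infty(\overline{\Omega_i}) \cap L^\infty(\Omega_i)$, bounded in $L^\infty$, such that $u_k \to u$ in $L^1(\Omega_i)$ and $u_k|_{\partial\Omega_i} \to u^{\mathrm{int}}_{\partial\Omega_i}$ in $L^1(\partial\Omega_i, \hh\res\partial\Omega_i)$. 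For each smooth $u_k$, the classical Gauss--Green formula combined with the definition \eqref{f:disttr} gives
\[
\pscal{\Tr(u_k \A, \partial\Omega_i)}{\varphi} = \int_{\partial\Omega_i} u_k\, \Tr(\A, \partial\Omega_i)\, \varphi\, d\hh, \qquad \varphi \in C^\infty_c(\Omega).
\]
Passing to the limit in $k$ in the defining identity \eqref{f:disttr} applied to $u\A$ (using dominated convergence in $\int_{\Omega_i} u_k \A \cdot \nabla \varphi$, Chen--Frid type stability of $\Div(u_k\A) \to \Div(u\A)$ tested against $\varphi$, and the $L^\infty$ bound on $\Tr(\A,\partial\Omega_i)$ together with the $L^1$ convergence of boundary values) yields the product formula.

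Finally, I identify the orientations. By (R3), $\nu_{\Omega_i} = \nu_\Sigma$ on $N_i$, so $\Omega_i$ lies locally on the $-\nu_\Sigma$ side of $\Sigma$; hence for $\hh$-a.e.\ $x \in N_i$ the interior trace $u^{\mathrm{int}}_{\partial\Omega_i}(x)$ coincides with the approximate limit of $u$ from the $-\nu_\Sigma$ half-ball, which is $\uext(x)$ on $J_u \cap N_i$ and $\tilde{u}(x)$ on $N_i \setminus J_u$ (recall the extension $\uint = \uext = \tilde u$ outside $S_u$ and the convention $\uint = u^+$). Symmetrically, $\nu_{\Omega_i'} = -\nu_\Sigma$, so the interior trace from $\Omega_i'$ yields $\uint$ on $J_u \cap N_i$ and $\tilde u$ elsewhere. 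Combining with $\Trm{u\A}{\Sigma} = \Tr(u\A, \partial\Omega_i)$ and $\Trp{u\A}{\Sigma} = -\Tr(u\A, \partial\Omega_i')$, and the corresponding identities $\Trm{\A}{\Sigma} = \Tr(\A,\partial\Omega_i)$, $\Trp{\A}{\Sigma} = -\Tr(\A,\partial\Omega_i')$, gives the four displayed formulas.

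The main obstacle is the product formula on $C^1$ pieces: while it is intuitive and agrees with the smooth case, rigorously passing to the limit in the distributional trace requires controlling both the bulk term $\int_{\Omega_i} u_k\A\cdot\nabla\varphi$ and the measure $\Div(u_k\A)$ against $\varphi$ simultaneously, using the uniform $L^\infty$ bound on $u_k$ and $\A$ together with the BV-trace convergence of the boundary values. Once this is established, the orientation bookkeeping is routine.
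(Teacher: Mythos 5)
First, a point of reference: the paper does not prove this proposition at all --- it is quoted verbatim from \cite[Proposition~3.1]{CD3}, whose proof is a blow--up argument based on the representation (from \cite{AmbCriMan}) of $\Trp{\A}{\Sigma}$, $\Trm{\A}{\Sigma}$ as limits of averages of $\A(y)\cdot\frac{y-x}{r}$ over half--balls $B_r^{i}(x)$, $B_r^{e}(x)$; the product rule then drops out because $u$ has one--sided approximate limits on exactly those half--balls, so $\mean{B_r^{i}(x)}|u-\uint(x)|\,|\A|\,dy\to 0$. Your architecture (mollification to get $u\A\in\DMloc$, localization to the $C^1$ pieces $\partial\Omega_i$ via (R1)--(R3), a product rule there, then orientation bookkeeping) is sound, and the first and last steps are fine; but your route through smooth approximation of $u$ runs into a real difficulty at the step you yourself flag, and your stated justification for it does not close the argument.

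The exact identity $\pscal{\Tr(u_k\A,\partial\Omega_i)}{\varphi}=\int_{\partial\Omega_i}u_k\,\Tr(\A,\partial\Omega_i)\,\varphi\,d\hh$ for smooth $u_k$ is correct, and its right--hand side converges to the desired quantity. The gap is in showing that the left--hand side converges to $\pscal{\Tr(u\A,\partial\Omega_i)}{\varphi}$, i.e.\ that $\int_{\Omega_i}\varphi\,d\Div(u_k\A)\to\int_{\Omega_i}\varphi\,d\Div(u\A)$ for $\varphi$ which does \emph{not} vanish on $\partial\Omega_i$. This does not follow from the properties you impose on $(u_k)$ ($L^1(\Omega_i)$ convergence, uniform $L^\infty$ bounds, $L^1(\partial\Omega_i)$ convergence of traces). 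Indeed, on $\Omega_i$ one has $\Div(u_k\A)=u_k\,\Div\A+\A\cdot\nabla u_k\,\LLN$: the term $\int_{\Omega_i}u_k\varphi\,d\Div\A$ is governed by the values of $u_k$ on the (possibly $\LLN$--negligible) support of $\Div^s\A$, about which $L^1$ convergence says nothing; and ``Chen--Frid type stability'' of $\Div(u_k\A)$ is a weak${}^*$ statement against $C_c(\Omega_i)$ only, so it cannot be invoked for a $\varphi$ whose support meets $\partial\Omega_i$ --- mass of $\A\cdot\nabla u_k\,\LLN$ may leak onto $\partial\Omega_i$, which is precisely where the trace lives. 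The step can be repaired: take the standard trace--preserving approximation so that in addition $u_k\to u$ \emph{strictly} in $BV(\Omega_i)$. Then (i) $u_k\A\to u\A$ in $L^1(\Omega_i)$ together with $\sup_k|Du_k|(K)<+\infty$ for $K\Subset\Omega_i$ gives $\int\psi\,d\Div(u_k\A)\to\int\psi\,d\Div(u\A)$ for every $\psi\in C_c(\Omega_i)$, and (ii) on the layer $L_\delta:=\{x\in\Omega_i:\ \operatorname{dist}(x,\partial\Omega_i)<\delta\}$ one has $|\Div(u_k\A)|(L_\delta)\leq\|u_k\|_\infty|\Div\A|(L_\delta)+\|\A\|_\infty|Du_k|(L_\delta)$, and strict convergence yields $\limsup_k|Du_k|(L_\delta)\leq|Du|(\overline{L_\delta}\cap\Omega_i)\to 0$ as $\delta\to 0$; a cutoff $\varphi=\varphi\eta_\delta+\varphi(1-\eta_\delta)$ then closes the limit. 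Without this no--concentration estimate the passage to the limit is unjustified, and with only the hypotheses you list the argument can produce a wrong boundary value (the identity forces the answer $\int_{\partial\Omega_i}g\,\Tr(\A,\partial\Omega_i)\varphi$ where $g$ is whatever the traces of $u_k$ converge to). The blow--up proof of \cite{CD3} avoids all of this, which is why it is the more economical route.
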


\section{Some remarks on $L^1(\Omega, |\Div\A|)$}
\label{s:luno}

In this section we analyze the properties of the functional spaces
needed to define the pairing \(\pair{\A, Du}\) 
introduced in \eqref{f:genp}.

\begin{definition}\label{d:BVA}
Given \(\A\in\DM(\Omega)\), let
us define the spaces:
\begin{gather*}
\BVA := \left\{
u\in BV(\Omega):\ u^* \in L^1(\Omega, |\Div\A|)
\right\}\,,
\\
\BVAlocloc := \left\{
u\in BV_{\rm loc}(\Omega):\ u^* \in L^1_{\rm loc}(\Omega, |\Div\A|)
\right\}\,.
\end{gather*}
\end{definition}
Notice that $|\Div\A| \ll \Haus{N-1}$ and $u^*$ is defined
$\Haus{N-1}$-a.e.\ in $\Omega$,
hence the definitions are well-posed.

The following lemma shows that if  $u\in\BVAloc$ then any representative
$\prec[\lambda]{u}$ of $u$ defined in \eqref{f:pr} (in particular $\upiu$, $\umeno$) is summable with 
respect to the measure $|\Div \A|$,
hence the definitions of the spaces
$\BVA$ and $\BVAloc$ are independent of the choice of the pointwise
representative. 

\begin{lemma}\label{l:l1}
Let $\A\in\DMloc$
and let $u\in BV_{{\rm loc}}(\Omega)$.
Given two Borel selections $\lambda, \mu \colon\Omega\to [0,1]$,
then it holds:
\begin{itemize}
\item[(i)]
\(
\prec{u}\in L^1_{\rm{loc}}(\Omega,|\Div \A|)
\)
if and only if
\(
\prec[\mu]{u}\in L^1_{\rm{loc}}(\Omega,|\Div \A|)
\);
\item[(ii)]
for every countably
$\Haus{N-1}$-rectifiable set
$\Sigma\subset\Omega$,
\(
\prec{u}\in L^1_{\rm{loc}}(\Sigma,\Haus{N-1}\res\Sigma)
\)
if and only if
\(\prec[\mu]{u}\in L^1_{\rm{loc}}(\Sigma,\Haus{N-1}\res\Sigma)\).
\end{itemize}
\end{lemma}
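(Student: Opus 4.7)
The plan is to reduce both statements to an estimate on the difference $\prec{u} - \prec[\mu]{u}$, which is supported where $\lambda \neq \mu$ and $u^+ \neq u^-$, that is, on the jump set $J_u$ (up to an $\Haus{N-1}$-negligible set). First I would compute, directly from the definition \eqref{f:pr},
\[
\prec{u}(x) - \prec[\mu]{u}(x) = \bigl(\lambda(x) - \mu(x)\bigr)\bigl(\upiu(x) - \umeno(x)\bigr), \qquad x\in J_u,
\]
while $\prec{u} = \prec[\mu]{u} = \tilde u$ on $\Omega\setminus S_u$. Since $\Haus{N-1}(S_u\setminus J_u) = 0$ and both $|\Div\A|$ and $\Haus{N-1}\res\Sigma$ are absolutely continuous with respect to $\Haus{N-1}$, the difference between the two representatives is concentrated (modulo null sets for the relevant measures) on $J_u$, where it is controlled by $\upiu - \umeno$.

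The heart of the argument is then to show that $(\upiu - \umeno)\chi_{J_u}$ is locally integrable with respect to the two measures in play. For \textbf{(i)}, fix a compact set $K\subset\Omega$ and an open set $K'$ with $K\subset K'\Subset\Omega$. Since $J_u$ is countably $\Haus{N-1}$-rectifiable, by \eqref{f:trA} and \eqref{f:infestrace} we have
\[
|\Div\A|(B\cap J_u) \leq 2\|\A\|_{L^\infty(K')} \Haus{N-1}(B\cap J_u)
\]
for every Borel set $B\subset K'$. Combining this with the fundamental identity
\[
|D^j u|(K) = \int_{K\cap J_u} (\upiu - \umeno)\, d\Haus{N-1} < +\infty,
\]
valid since $u\in BV_{\rm loc}(\Omega)$, we obtain
\[
\int_K \bigl|\prec{u} - \prec[\mu]{u}\bigr|\, d|\Div\A|
\leq 2\|\A\|_{L^\infty(K')}\, |D^j u|(K) < +\infty,
\]
which proves the equivalence in \textbf{(i)}.

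For \textbf{(ii)}, the estimate is even more direct: since $\Sigma\cap J_u$ is countably $\Haus{N-1}$-rectifiable and contained in $J_u$,
\[
\int_{K\cap\Sigma} \bigl|\prec{u} - \prec[\mu]{u}\bigr|\, d\Haus{N-1}
\leq \int_{K\cap J_u} (\upiu - \umeno)\, d\Haus{N-1} = |D^j u|(K) < +\infty,
\]
so the $L^1_{\rm loc}$ property on $(\Sigma, \Haus{N-1}\res\Sigma)$ transfers from one representative to the other. The only genuine point to be careful about is the reduction of the support of $\prec{u} - \prec[\mu]{u}$ from $S_u$ to $J_u$, which relies on $\Haus{N-1}(S_u\setminus J_u) = 0$ together with the absolute continuity of $|\Div\A|$ with respect to $\Haus{N-1}$ from Proposition~\ref{p:basicVF}(i); once this is observed, the remainder is bookkeeping using the rectifiability bound on $|\Div\A|$ derived from the trace theory of Section~\ref{distrtraces}.
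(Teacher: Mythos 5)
Your proposal is correct and follows essentially the same route as the paper: both reduce the question to the integrability of $(\upiu-\umeno)\chi_{J_u}$ via the identity $\prec{u}-\prec[\mu]{u}=(\lambda-\mu)(\upiu-\umeno)$, and both bound $|\Div\A|\res J_u$ by $2\|\A\|_{L^\infty}\,\hh\res J_u$ using \eqref{f:trA} and \eqref{f:infestrace} to conclude with $|D^j u|(K)<+\infty$. The only cosmetic difference is that you state the trace bound as a measure inequality and spell out part (ii), which the paper dismisses as ``entirely similar''.
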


\begin{proof}
We prove only (i), being the proof of (ii) entirely similar.
By the representation \eqref{f:trA}
of $\Div\A\res J_u$
and the estimate \eqref{f:infestrace},
for every compact set $K\Subset \Omega$ we have 
\[
\begin{split}
\int_{J_u \cap K} (\upiu-\umeno)\, d |\Div\A| & =
\int_{J_u \cap K} (\upiu-\umeno) |\Trp{\A}{J_u}-\Trm{\A}{J_u}|\, d \hh \\
& \leq 
2 \|\A\|_{L^\infty(K)}
|D^j u|(K).
\end{split}
\]
Recalling that $\upiu - \umeno = 0$ in $\Omega\setminus S_u$,
i.e.\ $\hh$-a.e.\ in $\Omega\setminus J_u$,
it follows that $\upiu-\umeno \in L^1_{\rm{loc}}(\Omega,|\Div \A|)$.
The result now follows by observing that
$
\prec{u} = \prec[\mu]{u} + (\lambda-\mu) (u^+-u^-).
$
\end{proof}

We underline that, 
for every $\A\in\DM(\Omega)$
and every $u\in BV(\Omega)$, it holds
\[
\int_{J_u} |\Trace[i,e]{\A}{J_u}|\, (\prec[+]{u} - \prec[-]{u})\, d\Haus{N-1}
\leq {\|\A\|}_\infty |D^j u| (\Omega) < +\infty.
\]
Nevertheless, in general the functions
$|\Trace[i,e]{\A}{J_u}|\,  u^{\pm}$ are not summable
with respect to $\Haus{N-1}\res J_u$,
even under the additional assumption $u\in \BVA[\Omega]$,
as it is shown in the following example.

\begin{example}
Let $\Omega = B_1(0) \subset \R^2$.
Let us show that there exist a vector field $\A\in\DM(\Omega)$
and a function $u\in\BVA$ such that
\[
\int_{J_u} |\Trace[i,e]{\A}{J_u}|\,  u^{\pm} \, d\Haus{1} = +\infty.
\]
Let $1 = r_0 > r_1 > \cdots > r_n > \cdots $ be a decreasing sequence
converging to $0$, such that 
\[
\sum_j r_j < +\infty,
\qquad
\sum_j j\, r_j = +\infty,
\]
and let
$u\colon \Omega\to \R$ be defined by
$u(x) = j$, if $r_j \leq |x| < r_{j-1}$, $j\in\N$.
Since
\[
\int_\Omega u\, dx = \pi \sum_{j=1}^{\infty}r_j^2 < \infty,
\quad
Du = \sum_{j=1}^\infty \Haus{1}\res\partial B_{r_j}(0)\,,
\quad
|Du|(\Omega) = 2\pi \sum_{j=1}^\infty r_j < \infty,
\]
then $u\in BV(\Omega)$.
We choose on the jump set $J_u = \bigcup_{j=1}^\infty \partial B_{r_j}(0)$
the orientation such that $u^i = u^+ = j+1$ and $u^e = u^- = j$
on $\partial B_{r_j}(0)$.

Let $(a_j)\subset\R$ be a bounded sequence, and let
\[
\A(x) := a(|x|)\, \frac{x}{|x|}\,,
\qquad
\text{with}\quad
a(\rho) :=  \sum_{j=1}^{\infty} a_j\, \chi_{[r_j, r_{j-1})}(\rho),
\
\rho \in (0,1).
\]
We have that $\A\in L^\infty(\Omega, \R^2)$, 
$\Trace[i]{\A}{J_u} = a_{j+1}$,
$\Trace[e]{\A}{J_u} = a_j$
on $\partial B_{r_j}$, and
\begin{gather*}
\Div\A = \frac{a(|x|)}{|x|}\,\Leb{2} + \sum_{j=1}^{\infty} (a_{j+1}-a_j)\Haus{1}\res\partial B_{r_j}\,,
\\
|\Div\A|(\Omega) \leq {\|\A\|}_\infty \int_\Omega \frac{1}{|x|}\, dx
+ 2\pi \sum_{j=1}^{\infty} |a_{j+1}-a_j|\, r_j < +\infty,
\end{gather*}
so that $\A\in\DM(\Omega)$.
On the other hand, if we choose a sequence $(a_j)$
such that $|a_j| \geq c > 0$ for every $j\in\N$, we have that
\[
\int_{J_u} u^- \, |\Trace[i,e]{\A}{J_u}|\, d\Haus{1}
\geq
2\pi c \sum_{j=1}^{\infty} j\, r_j = +\infty.
\]
We conclude this example observing that, with the choice
$a_j = (-1)^j$, we have also that
\[
\int_{J_u} u^\pm \, |\Trace[i]{\A}{J_u} - \Trace[e]{\A}{J_u}|\, d\Haus{1}
= +\infty.
\]
\end{example}

\medskip
We collect here the main features of the truncation operator
that will be useful to generalize
to $u\in\BVA[\Omega]$ properties valid in 
$\BVL[\Omega]$.

\begin{proposition}[Properties of the truncated functions]
\label{p:trunc}
For every $k>0$, let
\begin{equation}\label{gtrun}
T_k(s) := \max\{\min\{s, k\}, -k\}
\,,
\qquad s\in\R.
\end{equation}
Let $u\in BV(\Omega)$
and let $\lambda\colon\Omega\to[0,1]$ be a Borel function.
Then the following hold.
\begin{itemize}
\item[(i)]
$T_k(u^\pm) = [T_k(u)]^\pm \to u^\pm$,
$\prec{[T_k(u)]}\to u^\lambda$,
$\Haus{N-1}$-a.e.\ in $\Omega$;

\item[(ii)]
$|D T_k(u)| \leq |Du|$ in the sense of measures, for every $k>0$;

\item[(iii)]
$|[T_k(u)]^\pm| \leq |u^\pm|$ for every $k>0$, hence
\[
|T_k(\prec{u})| \leq (1-\lambda) |u^-| + \lambda\, |u^+|
\qquad
\forall k>0;
\]

\item[(iv)] if $u\in\BVA[\Omega]$, then
$T_k(\prec{u}) \to \prec{u}$ in $L^1(\Omega, |\Div\A|)$.
\end{itemize}
\end{proposition}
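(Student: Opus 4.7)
My plan is to treat the four items in order. The crux is the elementary identity $[T_k u]^\pm = T_k(u^\pm)$, from which (i) and (iii) follow immediately; (ii) is the classical BV chain rule; and (iv) will follow by dominated convergence once (i), (iii), and Lemma~\ref{l:l1} are combined.

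For (i), I will fix $x\in\Omega$ with $u^+(x)\in\R$ and examine the superlevel sets of $T_k u$: for $t\geq k$ one has $\{T_k u > t\}=\emptyset$ (density $0$), for $-k\leq t < k$ one has $\{T_k u > t\}=\{u > t\}$ (density $0$ at $x$ iff $t\geq u^+(x)$), and for $t<-k$ one has $\{T_k u > t\}=\Omega$ (density $1$). Taking the infimum over density-$0$ levels gives $[T_k u]^+(x)=T_k(u^+(x))$; the symmetric argument on density-$1$ levels yields $[T_k u]^-(x)=T_k(u^-(x))$. Since $u\in BV(\Omega)$ forces $u^\pm\in\R$ at $\hh$-a.e.\ point and $T_k(s)\to s$ for every $s\in\R$, the pointwise $\hh$-a.e.\ convergence $[T_k u]^\pm\to u^\pm$ follows, and taking the convex combination with weights $1-\lambda$ and $\lambda$ gives $\prec{[T_k u]}\to \prec u$ $\hh$-a.e.

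For (ii), I will invoke the standard BV chain rule: $T_k$ is $1$-Lipschitz with $T_k(0)=0$ and $|T_k(u)|\leq |u|\in L^1(\Omega)$, so $T_k(u)\in BV(\Omega)$ with $|D(T_k u)|\leq |Du|$ as measures. Item (iii) then combines $|T_k(s)|\leq |s|$ with the identity from (i): $|[T_k u]^\pm|=|T_k(u^\pm)|\leq |u^\pm|$, and applying $|T_k(s)|\leq |s|$ at $s=\prec u$ together with convexity of $|\cdot|$ yields $|T_k(\prec u)|\leq |\prec u|\leq (1-\lambda)|u^-|+\lambda|u^+|$.

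Finally for (iv), the pointwise convergence $T_k(\prec u)\to \prec u$ holds $|\Div\A|$-a.e.\ by (i) together with $|\Div\A|\ll \hh$ (Proposition~\ref{p:basicVF}(i)). Lemma~\ref{l:l1}(i), applied with the two Borel selections $\lambda\equiv 0$ and $\lambda\equiv 1$, shows that the hypothesis $u\in\BVA$ forces $u^+,\, u^-\in L^1(\Omega,|\Div\A|)$, so the bound from (iii) provides a summable dominant; the dominated convergence theorem then delivers the $L^1(\Omega,|\Div\A|)$ convergence. The only mildly delicate step in the whole argument is the superlevel-set computation underlying the identity $[T_k u]^\pm = T_k(u^\pm)$; once that identity is in hand, the remaining items are direct consequences of it, of the BV chain rule, and of Lemma~\ref{l:l1}.
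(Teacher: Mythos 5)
Your proof is correct and follows essentially the same route as the paper's: item (ii) via the $1$-Lipschitz chain rule, (iii) from $|T_k(s)|\leq|s|$ together with (i), and (iv) by dominated convergence using the bound in (iii), Lemma~\ref{l:l1}, and $|\Div\A|\ll\hh$. The only difference is that for (i) the paper simply cites \cite[Theorem~4.34(a)]{AFP}, whereas you supply the underlying superlevel-set computation $\{T_k u>t\}=\{u>t\}$ for $t\in[-k,k)$ (empty for $t\geq k$, all of $\Omega$ for $t<-k$) yielding $[T_k u]^\pm=T_k(u^\pm)$ directly from the definitions of the approximate upper and lower limits; that computation is accurate.
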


\begin{proof}
The proof of (i) can be found in \cite[Theorem~4.34(a)]{AFP}.

The inequality in (ii) is a consequence of the fact that $T_k$ is a
$1$-Lipschitz function
(see the first part of the proof of Theorem~3.96 in \cite{AFP}).

The inequalities in (iii) follow from $|T_k(s)| \leq |s|$
and the equalities in (i), 
whereas (iv) follows from (iii), Lemma~\ref{l:l1}, 
and Lebesgue's Dominated Convergence Theorem.
\end{proof}

\section{Definition and basic properties of pairings
}
\label{s:genpair}

\begin{definition}[Generalized pairing]\label{d:pair}
Given a vector field $\A\in \DMloc$ and a Borel function 
$\lambda \colon \Omega \to [0,1]$, 
for every $u\in\BVAloc$
the {\sl $\lambda$--pairing} between $\A$ 
and $Du$
is the distribution
$\pair{\A,Du}\colon C^\infty_c(\Omega) \to \R$ acting as
\begin{equation}\label{f:gpairing}
\pscal{\pair{\A,Du}}{\varphi}:= -\intA{\prec{u}\,\varphi}- 
\intom{u\A\cdot\nabla\varphi}\,,
\qquad
\varphi\in C^\infty_c(\Omega)\,.
\end{equation}
\end{definition}

\begin{remark}
The standard pairing
\[
\pscal{\spair{\A, Du}}{\varphi} :=
-\int_\Omega u^*\varphi\, d \Div \A - \int_\Omega u \, \A\cdot \nabla\varphi\, 
dx, 
\]
introduced in \cite{Anz}, and deeply studied in recent years (see e.g.\
\cite{ChenFrid}, \cite{CD3} and the references therein),
is the $\lambda$-pairing corresponding to the
constant selection
$\lambda(x)=\frac{1}{2}$ for every $x\in \Omega$.
\end{remark}

\begin{remark}
The definition of generalized pairing and the properties proved in the
rest of the paper can be extended straightforwardly to
vector fields $\A\in \DMlocloc$ and functions $u\in\BVAlocloc$.
\end{remark}

Clearly, the change of pointwise values of $u$ may just affect the behavior of
the pairing on the jump set $J_u$ of $u$. More precisely, the following basic
properties hold.

\begin{proposition}\label{p:pairing0}
Let $\A\in \DMloc$, $u\in\BVAloc$,  and $\lambda \colon \Omega \to [0,1]$ be a 
Borel function. Then $\pair{\A,Du}$ is a Radon measure in $\Omega$, and 
the equations
\begin{equation}\label{f:servi}
\Div(u\A)=\prec{u}\Div\A+\pair{\A,Du}\,,
\end{equation}
\begin{equation}\label{f:resto}
\pair{\A,Du}=\spair{\A,Du}+\left(\frac{1}{2}-\lambda\right)(\upiu-\umeno) 
\Div\A\res{J_u}\,,
\end{equation}
hold in the sense of measures in $\Omega$.
Moreover, $\pair{\A,Du}$ is absolutely continuous
with respect to $|Du|$, and  
\begin{equation}\label{f:astim}
|\pair{\A,Du}| \leq 
\|\A\|_\infty |Du|.
\end{equation}  
In what follows we will write
\begin{equation}\label{f:polar}
\pair{\A,Du}=\polar(\A,Du,x)|Du|,
\end{equation}
where $\polar(\A,Du,\cdot)$ denotes the Radon--Nikod\'ym derivative of
$\pair{\A,Du}$ with respect to $|Du|$.
\end{proposition}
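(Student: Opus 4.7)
The core of the plan is to regard the generalized pairing as a perturbation of the standard one $\spair{\A, Du}$, whose properties are already established in \cite{CD3}. First I compare the two distributional definitions: for every $\varphi\in C^\infty_c(\Omega)$,
\[
\pscal{\pair{\A,Du} - \spair{\A,Du}}{\varphi} = \int_\Omega (u^* - \prec{u})\, \varphi\, d\Div\A.
\]
A direct computation gives $u^* - \prec{u} = (1/2-\lambda)(\upiu-\umeno)$, which vanishes $\Haus{N-1}$-a.e.\ outside $J_u$, hence also $|\Div\A|$-a.e.\ outside $J_u$ since $|\Div\A|\ll\Haus{N-1}$ by Proposition~\ref{p:basicVF}. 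Moreover, using the representation \eqref{f:trA} and the bound \eqref{f:infestrace}, the correction term $(1/2-\lambda)(\upiu-\umeno)\Div\A\res J_u$ has total variation at most $\|\A\|_\infty |D^j u|(\Omega)<+\infty$, so \eqref{f:resto} holds as an identity of finite signed Radon measures.

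Since $\spair{\A,Du}$ is known to be a Radon measure, so is $\pair{\A,Du}$, which establishes the first assertion. Equation \eqref{f:servi} then follows immediately by rearranging the definition \eqref{f:gpairing}, noting that $\prec{u}\Div\A$ is a well-defined Radon measure thanks to $\prec{u}\in L^1_{\rm loc}(\Omega,|\Div\A|)$ from Lemma~\ref{l:l1}.

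For the bound \eqref{f:astim} and the absolute continuity with respect to $|Du|$, I split $\Omega = J_u \cup (\Omega\setminus J_u)$. On $\Omega\setminus J_u$, identity \eqref{f:resto} reduces to $\pair{\A,Du} = \spair{\A,Du}$, so both properties are inherited from \cite{CD3}. On $J_u$, I combine \eqref{f:servi} with the trace representations: \eqref{f:trA} expresses $\Div\A\res J_u$ in terms of $\Trp{\A}{J_u}-\Trm{\A}{J_u}$, while Proposition~\ref{p:tracesb} (applied after truncation via Proposition~\ref{p:trunc} when $u\notin L^\infty$) yields $\Div(u\A)\res J_u = [\uint\,\Trp{\A}{J_u} - \uext\,\Trm{\A}{J_u}]\Haus{N-1}\res J_u$. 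Choosing the orientation so that $\uint=\upiu$ and $\uext=\umeno$, elementary algebra gives the explicit formula
\[
\pair{\A,Du}\res J_u = (\upiu-\umeno)\bigl[(1-\lambda)\Trp{\A}{J_u} + \lambda\,\Trm{\A}{J_u}\bigr]\,\Haus{N-1}\res J_u.
\]
Since $\lambda\in[0,1]$ and $|\Trp{\A}{J_u}|,|\Trm{\A}{J_u}|\leq\|\A\|_\infty$, taking total variations and recalling that $|Du|\res J_u=(\upiu-\umeno)\Haus{N-1}\res J_u$ yields both the bound \eqref{f:astim} on $J_u$ and the absolute continuity there. The representation \eqref{f:polar} is then a direct consequence of the Radon--Nikod\'ym theorem.

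The main technical obstacle lies in the reduction to Proposition~\ref{p:tracesb}, whose statement is formulated for $u\in\BVLloc$. To cover the full class $\BVAloc$, I would apply the result to the truncations $T_k(u)$ and pass to the limit: the convergences $T_k(u)\to u$ in $L^1_{\rm loc}(\Omega)$ and $T_k(\prec{u})\to\prec{u}$ in $L^1(\Omega,|\Div\A|)$ provided by Proposition~\ref{p:trunc} yield $\pair{\A,DT_k(u)}\to\pair{\A,Du}$ in the sense of distributions, and uniform boundedness of total variations upgrades this to weak-$*$ convergence of Radon measures, permitting the passage to the limit in the identity on $J_u$.
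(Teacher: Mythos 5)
Your proof is correct, and its skeleton --- establishing \eqref{f:resto} by comparing the two distributional definitions (using $u^*-\prec{u}=(\tfrac12-\lambda)(\upiu-\umeno)$, $|\Div\A|\ll\hh$, and the integrability of $(\upiu-\umeno)$ on $J_u$ from \eqref{f:trA} and \eqref{f:infestrace}), deducing the Radon-measure property and \eqref{f:servi}, and then truncating to reach all of $\BVAloc$ --- coincides with the paper's. The one step where you genuinely diverge is the estimate \eqref{f:astim}: the paper writes $\pair{\A,Du}=(1-\lambda)\pair[0]{\A,Du}+\lambda\,\pair[1]{\A,Du}$ and invokes the bounds $|\pair[0]{\A,Du}|,\,|\pair[1]{\A,Du}|\leq\|\A\|_\infty|Du|$ from Proposition~3.5 of \cite{SchSch}, whereas you compute the jump part explicitly via \eqref{f:trA} and Proposition~\ref{p:tracesb}, arriving at $\pair{\A,Du}\res J_u=(\upiu-\umeno)\bigl[(1-\lambda)\Trp{\A}{J_u}+\lambda\Trm{\A}{J_u}\bigr]\hh\res J_u$, and read off the bound from \eqref{f:infestrace} together with $|Du|\res J_u=(\upiu-\umeno)\,\hh\res J_u$. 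That computation is correct --- it is precisely the content of Proposition~\ref{p:pairing}, which the paper proves afterwards by the same trace identities --- so your route proves that proposition en passant and avoids the external reference to \cite{SchSch}, at the price of front-loading the trace machinery. Two points to make explicit in a final write-up: off $J_u$ you still need $|\spair{\A,Du}|\leq\|\A\|_\infty|Du|$ for the standard pairing of a possibly unbounded $u\in\BVAloc$, which is safest obtained (as your last paragraph indicates) by running the whole argument for $T_k(u)$ and using $|DT_k(u)|\leq|Du|$ with the lower semicontinuity of total variation on open sets under weak${}^*$ convergence; and the passage from distributional convergence of $\pair{\A,DT_k(u)}$ to weak${}^*$ convergence of measures indeed requires exactly the uniform total-variation bound you invoke.
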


\begin{proof}
Assume, in addition, that $u\in\BVLloc$.
In this case the fact that $\pair{\A,Du}$ is a Radon measure, and the 
validity of \eqref{f:servi} are 
straightforward 
consequences of the fact that the distribution
\[
\pscal{\Div(u\A)}{\varphi}= \int_\Omega u \, \A\cdot \nabla\varphi\, 
dx, \qquad \varphi\in\cinfc
\] 
is a Radon measure in $\Omega$ (see 
\cite{ChenFrid}). 
Moreover, we have that
\[
\begin{split}
\pair{\A,Du} & =-\ust 
\Div\A+\Div(u\A)+\left(\frac{1}{2}-\lambda\right)(\upiu-\umeno) 
\Div\A\res{J_u}  
\\
& =\spair{\A,Du}+\left(\frac{1}{2}-\lambda\right)(\upiu-\umeno) 
\Div\A\res{J_u}\,.
\end{split}
\]
From \cite[Proposition~3.5]{SchSch} (see in particular formula (3.9) there),
we have that
\[
\left|\pair[0]{\A,Du}\right|\,, \
\left|\pair[1]{\A,Du}\right|
\leq \|\A\|_\infty |Du|.
\]
Since $\pair{\A,Du} = (1-\lambda)\pair[0]{\A,Du} + \lambda\pair[1]{\A,Du}$,
\eqref{f:astim} follows.

\medskip
Consider now the general case $u\in\BVA[\Omega]$.
Let $u_k := T_k(u)$ be the sequence of truncated functions.
By Proposition~\ref{p:trunc}(i) and (iv), we have that
$\prec{(u_k)} \to u^\lambda$
$\Haus{N-1}$-a.e.\ in $\Omega$
and in $L^1(\Omega, |\Div\A|)$.
Hence, we can pass to the limit in
\[
\pscal{\pair{\A,Du_k}}{\varphi} = -\intA{\prec{(u_k)}\,\varphi}- 
\intom{u_k\A\cdot\nabla\varphi}
\]
and obtain 
that $\pair{\A,Du}$ is the weak${}^*$ limit
of $\pair{\A,Du_k}$ in the sense of measures,
so that \eqref{f:servi} follows.
Since, by the estimate \eqref{f:astim} and Proposition~\ref{p:trunc}(ii), 
we have that
\[
|\pair{\A, Du_k}|(\Omega) \leq \|\A\|_\infty |Du_k|(\Omega)
\leq \|\A\|_\infty |Du|(\Omega),
\qquad
\forall k\in\N,
\]
we conclude that  
\eqref{f:servi}, \eqref{f:resto} and \eqref{f:astim}
hold in the sense of measures.
\end{proof}

\begin{remark}
\label{r:trunc}
In the last part of the proof of Proposition~\ref{p:pairing0}
we have shown that,
for every $\A\in\DMloc$ and every $u\in \BVAloc$,
the pairing $\pair{\A,Du}$
is the weak${}^*$ limit, in the sense of measures,
of the sequence $\pair{\A,DT_k(u)}$.
\end{remark}

\begin{remark}\label{r:convcomb} 
Since $(u+v)^+ \leq u^+ + v^+$ and
$(u+v)^- \geq u^- + v^-$,
with possibly strict inequalities,
the map $u\mapsto \pair{\A,Du}$ is not linear, in general.
On the other hand, the map $u\mapsto u^*$ is linear,
hence the standard pairing is linear
with respect to $u$.
More precisely, the $\lambda$-pairing is linear
if and only if 
$\pair{\A,Du} = \spair{\A,Du}$ for every $u\in\BVL$.
Indeed, for every $u\in\BVL$ we have that
\[
\begin{split}
\pair{\A, Du} + \pair{\A, D(-u)}
& = \left(\frac{1}{2}-\lambda\right)
[\upiu-\umeno +(-u)^+ - (-u)^-] \Div\A \res J_u
\\ & =
2 \left(\frac{1}{2}-\lambda\right)(\upiu-\umeno) \Div\A \res J_u.
\end{split}
\]
Hence, if there exists $u\in\BVL$ such that
$\pair{\A,Du} \neq \spair{\A,Du}$, 
then the claim follows from \eqref{f:resto}.
\end{remark}

Using \eqref{f:resto}, and the results of Theorem 3.3 in \cite{CD3}, 
we are able to 
compute explicitly the diffuse part $\pair{\A,Du}^d$, the absolutely continuous part 
$\pair{\A,Du}^a$, and the jump part $\pair{\A,Du}^j$ of the generalized
pairing.

\begin{proposition}\label{p:pairing}
Let \(\A\in\DMloc\) and \(u\in\BVAloc\).
Then the diffuse, the absolutely continuous and the jump part of the measure 
\(\pair{\A, Du}\) are respectively
\begin{gather*}
\pair{\A, Du}^d = \spair{\A, Du}^d\,, \qquad
\pair{\A, Du}^a = \A \cdot \nabla u\, \LLN, 
\\
\pair{\A, Du}^j = 
\left[(1-\lambda)\Trp{\A}{J_u} +\lambda \Trm{\A}{J_u}
\right]\, (\upiu-\umeno) \, 
\hh 
\res J_u\,,
\end{gather*}
where 
$\Trp{\A}{J_u}$ and $\Trm{\A}{J_u}$ 
are the normal traces corresponding to
the orientation of $J_u$ such that $\upiu=\uint$.
\end{proposition}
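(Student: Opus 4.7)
The plan is to leverage the decomposition \eqref{f:resto}, which expresses the generalized pairing as the standard pairing plus a correction term concentrated on $J_u$, together with the known structure of the standard pairing recalled from \cite[Theorem 3.3]{CD3}.

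First, I would rewrite the correction term in \eqref{f:resto} using the representation \eqref{f:trA} for the restriction $\Div\A\res J_u$. Since $J_u$ is countably $\Haus{N-1}$-rectifiable, oriented so that $\uint = \upiu$ (and hence $\uext = \umeno$), formula \eqref{f:trA} yields
\[
\Div\A\res J_u = \left[\Trp{\A}{J_u} - \Trm{\A}{J_u}\right]\,\Haus{N-1}\res J_u,
\]
so that
\[
\left(\tfrac{1}{2}-\lambda\right)(\upiu-\umeno)\,\Div\A\res J_u
= \left(\tfrac{1}{2}-\lambda\right)(\upiu-\umeno)\left[\Trp{\A}{J_u} - \Trm{\A}{J_u}\right]\Haus{N-1}\res J_u.
\]
This correction is absolutely continuous with respect to $\Haus{N-1}\res J_u$, hence it is singular with respect to $\LLN$ and vanishes on every Borel set with finite $\Haus{N-1}$ measure outside $J_u$; in particular its diffuse part (in the sense of Proposition~\ref{p:decmu} applied to $\mu = \pair{\A,Du}$) is zero, and it contributes only to the jump part.

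Next, I would invoke \cite[Theorem 3.3]{CD3} to read off, for the standard pairing $\spair{\A,Du}$, the identities
\[
\spair{\A,Du}^d = \spair{\A,Du}^d, \qquad \spair{\A,Du}^a = \A\cdot\nabla u\,\LLN,
\]
\[
\spair{\A,Du}^j = \tfrac{1}{2}\left[\Trp{\A}{J_u} + \Trm{\A}{J_u}\right](\upiu-\umeno)\,\Haus{N-1}\res J_u.
\]
From the observation above and \eqref{f:resto}, the diffuse and absolutely continuous parts of $\pair{\A, Du}$ coincide with those of $\spair{\A, Du}$, yielding the first two claims.

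For the jump part, a short algebraic computation combining the two contributions gives
\[
\pair{\A,Du}^j = \left\{\tfrac{1}{2}\left[\Trp{\A}{J_u}+\Trm{\A}{J_u}\right] + \left(\tfrac{1}{2}-\lambda\right)\left[\Trp{\A}{J_u} - \Trm{\A}{J_u}\right]\right\}(\upiu-\umeno)\,\Haus{N-1}\res J_u,
\]
and the bracket simplifies to $(1-\lambda)\Trp{\A}{J_u} + \lambda\,\Trm{\A}{J_u}$, which is the desired formula. The only real subtlety is the bookkeeping with the orientation of $J_u$: one must fix $\uint = \upiu$ consistently so that the traces $\Trp{\A}{J_u}, \Trm{\A}{J_u}$ pair correctly with $\upiu, \umeno$. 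By Remark~\ref{r:diffnt} the difference $\Trp{\A}{J_u}-\Trm{\A}{J_u}$ is orientation-independent, which keeps the identity \eqref{f:trA} unambiguous, while the individual traces entering the final formula require the chosen orientation — this is the main point to handle carefully, but it is dictated unambiguously by the convention adopted in Section~\ref{ss:BV}.
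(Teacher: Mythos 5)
Your proposal is correct and follows essentially the same route as the paper: both start from \eqref{f:resto}, observe that the correction term is concentrated on $J_u$ (so the diffuse and absolutely continuous parts are inherited from the standard pairing), rewrite $\Div\A\res J_u$ via \eqref{f:trA}, and combine with the formula for $\spair{\A,Du}^j$ from \cite[Theorem~3.3]{CD3} by the same algebraic simplification $\tfrac{1}{2}(\alpha^i+\alpha^e)+(\tfrac{1}{2}-\lambda)(\alpha^i-\alpha^e)=(1-\lambda)\alpha^i+\lambda\alpha^e$. The only cosmetic difference is that the paper cites \cite[Theorem~3.2]{ChenFrid} for $\spair{\A,Du}^a=\A\cdot\nabla u\,\LLN$, while you read it off from \cite{CD3}; this is immaterial.
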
 

\begin{proof}
By \eqref{f:resto}, \(\pair{\A, Du}\) and \(\spair{\A, Du}^d\)
may differ only on $J_u$,
hence \(\pair{\A, Du}^d=\spair{\A, 
Du}^d\). Moreover, by Theorem 3.2 in \cite{ChenFrid}, \(\pair{\A, 
Du}^a=\spair{\A, 
Du}^a=\A \cdot \nabla u\, \LLN\).

Concerning the jump part \(\pair{\A, Du}^j\), by \eqref{f:astim}, we already 
know that it is 
concentrated on $J_u$. 
Denoting by
$\alpha^i := \Trp{\A}{J_u}$ and $\alpha^e := \Trm{\A}{J_u}$, 
by Theorem 3.3 in \cite{CD3} we already know that
\[
\spair{\A, Du}^j = \frac{\alpha^i+\alpha^e}{2}
	\, (\upiu-\umeno) \, \hh \res J_u\,.
\]
Finally, by \eqref{f:resto}
and \eqref{f:trA}, we conclude that 
\[
\begin{split}
\pair{\A, Du}^j & = \spair{\A, Du}^j+
\left(\frac{1}{2}-\lambda\right)(\upiu-\umeno) \Div\A \res J_u \\
& =
\frac{\alpha^i+\alpha^e}{2}
	\, (\upiu-\umeno) \, \hh \res J_u 
+ \left(\frac{1}{2}-\lambda\right)(\upiu-\umeno) (\alpha^i-\alpha^e)
	\, {\mathcal H}^{N-1} \res J_u \\
& = [(1-\lambda)\alpha^i +\lambda \alpha^e]\, (\upiu-\umeno) \, 
\hh 
\res J_u.
\qedhere
\end{split}
\]
\end{proof}

\begin{remark}
[The pairing trivializes on $W^{1,1}$]
From Proposition~\ref{p:pairing}, we have that
\[
\pair{\A, Du} =
\spair{\A, Du}
= \A\cdot \nabla u\, \LLN,
\qquad
\forall u\in W^{1,1}(\Omega)\cap L^\infty(\Omega).
\]
\end{remark}

\begin{remark}
[\(BV\) vector fields]	
If \(\A \in BV(\Omega, \R^N) \cap L^\infty(\Omega, \R^N)\),
then clearly \(\A \in \DMloc\) and
\[
\Trace[i,e]{\A}{J_u} = \A^{i,e}_{J_u}\cdot \nu_u\,, 
\qquad
\text{\(\hh\)-a.e.\ in}\ J_u,
\]
where \(\A^{i,e}_{J_u}\) are the traces of \(\A\) on \(J_u\)
in the sense of $BV$
(see \cite[Theorem~3.77]{AFP}).
Hence, the jump part of \(\pair{\A, Du}\) can be written as
\[
\pair{\A, Du}^j = \left[(1-\lambda)\A^i_{J_u} 
+ \lambda\, \A^e_{J_u}\right]\, \cdot D^j u.
\]
\end{remark}

\begin{remark}
[The pairing trivializes for continuous vector fields]
If $\A\in C(\Omega, \R^N)$, then
by \cite[Theorem~3.3]{CD3} and
\cite[Theorem~3.7]{ComiPayne}
it holds
\[
\pair{\A, Du} =
\A\cdot Du,
\qquad
\forall u\in \BVAloc.
\]
\end{remark}

The following result is an improvement of
Proposition~4.15 in \cite{CD3}, Theorem~1.2 in \cite{ChenFrid}
and Lemma~2.2 in \cite{Anz}. 

\begin{proposition}[Approximation by \(C^\infty\) fields]\label{p:approxinf}
	Let \(\A\in\DM(\Omega)\).
	Then there exists a sequence \((\A_k)_k\) in 
	\(C^\infty(\Omega,\R^N)\cap L^\infty(\Omega, \R^N)\) 
	satisfying the following properties.
	\begin{itemize}
		\item[(i)] 
		\(\A_k - \A\to 0\) in \(L^1(\Omega,\R^N)\), 
		\(\int_\Omega |\Div \A_k|\, dx \to |\Div \A|(\Omega)\),
and \((\A_k)_k\) is uniformly bounded.
		\item[(ii)] 
		\(\Div\A_k \stackrel{*}{\rightharpoonup} \Div\A\)
		in the weak${}^*$ sense of measures in \(\Omega\).
		\item[(iii)]
		For every oriented countably \(\hh\)-rectifiable set \(\Sigma\subset\Omega\) it holds
		\[
		\lim_{k\to +\infty} \pscal{\Trace[i,e]{\A_k}{\Sigma}}{\varphi} =
		\pscal{\Trace[*]{\A}{\Sigma}}{\varphi}
		\qquad \forall \varphi\in C_c(\Omega),
		\]
		where \(\Trace[*]{\A}{\Sigma} := [\Trp{\A}{\Sigma} + \Trm{\A}{\Sigma}]/2\).
\end{itemize}
Moreover, for every $u\in \BVL[\Omega]$, it holds
\begin{itemize}
	\item[(iv)] 
	\(\spair{\A_k, Du} \stackrel{*}{\rightharpoonup} \spair{\A, Du}\)
	locally in the weak${}^*$ sense of measures in \(\Omega\);
	\item[(v)]
	the sequence 
	\(\theta(\A_k, Du; \cdot)\) 
	weakly${}^*$ converges in $L^\infty(\Omega, |Du|)$ to
	\(\theta(\A, Du; \cdot)\),
	where $\theta(\A, Du; \cdot)$ is the Radon--Nikod\'ym derivative of $\spair{\A, Du}$
with respect to $|Du|$.
\end{itemize}
\end{proposition}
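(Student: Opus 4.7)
I would construct the approximations by mollification: set $\A_k := \widetilde{\A} * \rho_{\epsilon_k}$, where $\rho_{\epsilon_k}$ is a radially symmetric mollifier with $\epsilon_k\to 0^+$ and $\widetilde\A$ is a suitable extension of $\A$ (e.g.\ via cut-offs along an exhaustion $\Omega_k\Subset\Omega$, in the spirit of \cite[Proposition~4.15]{CD3}). Standard properties of convolution then yield (i) and (ii) at once: the convergence $\A_k\to\A$ in $L^1(\Omega,\R^N)$, the bound $\|\A_k\|_\infty\leq\|\A\|_\infty$, the identity $\Div\A_k=(\Div\A)*\rho_{\epsilon_k}$ on interior subdomains, the convergence $\int_\Omega|\Div\A_k|\, dx\to|\Div\A|(\Omega)$, and the weak${}^*$ convergence $\Div\A_k\weakstarto\Div\A$ tested against $C_c(\Omega)$.

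The crucial new ingredient is (iii). Fix $\varphi\in C_c(\Omega)$. Using the countable rectifiability of $\Sigma$ and the decomposition $N_i\subset\Sigma_i=\partial\Omega_i\cap\partial\Omega'_i$ recalled in Subsection~\ref{distrtraces}, I would localize and assume that $\varphi$ is supported in a small open set $U$ meeting $\Sigma$ only inside a single piece $N_i$ (up to a $\Haus{N-1}$-negligible set). Since $\A_k$ is smooth, both $\Trp{\A_k}{\Sigma}$ and $\Trm{\A_k}{\Sigma}$ coincide with the pointwise product $\A_k\cdot\nu_\Sigma$, and the distributional definition \eqref{f:disttr} applied to $\Omega_i$ reads
\begin{equation*}
\int_{N_i}\A_k\cdot\nu_\Sigma\,\varphi\,d\Haus{N-1}=\int_{\Omega_i}\A_k\cdot\nabla\varphi\,dx+\int_{\Omega_i}\varphi\,d\Div\A_k.
\end{equation*}
The first integral on the right passes to $\int_{\Omega_i}\A\cdot\nabla\varphi\,dx$ by (i). The second is delicate: the radial symmetry of $\rho_{\epsilon_k}$ forces the singular mass that $\Div\A$ carries on $\Sigma$ to be split asymptotically into equal halves between $\Omega_i$ and $\Omega'_i$, so that
\begin{equation*}
\int_{\Omega_i}\varphi\,d\Div\A_k\longrightarrow\int_{\Omega_i}\varphi\,d\Div\A+\frac{1}{2}\int_\Sigma\varphi\,d\Div\A\,.
\end{equation*}
Combining these limits with \eqref{f:trA} and the distributional definition of $\Trm{\A}{\Sigma}$ yields exactly $\pscal{\Trace[*]{\A}{\Sigma}}{\varphi}$. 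The main obstacle is the rigorous justification of the equal-splitting limit; the natural route is to cover $N_i$ by $C^1$ graphs (by rectifiability), blow up at a $\Haus{N-1}$-a.e.\ point to reduce to a flat piece of hypersurface, and there use the symmetry identity $\int_{\{y_N>0\}}\rho_{\epsilon_k}(x-y)\,dx=1/2$ for $y$ on the hyperplane.

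For (iv), I would expand $\pscal{\spair{\A_k,Du}}{\varphi}$ via its definition: the term $-\int u\,\A_k\cdot\nabla\varphi\,dx$ converges to $-\int u\,\A\cdot\nabla\varphi\,dx$ by dominated convergence, while the term $-\int u^*\varphi\,d\Div\A_k$ reduces to $-\int u\,\varphi\,d\Div\A_k$ since $\Div\A_k\ll\LLN$ and $u^*=u$ $\LLN$-a.e.; the limit of this latter integral is controlled by decomposing $\Div\A$ into its absolutely continuous, Cantor and jump parts and applying (iii) to the jump set of $\Div\A$, which precisely recovers the factor $u^*=(u^++u^-)/2$ appearing in the standard pairing (consistently with Proposition~\ref{p:pairing} for $\lambda=1/2$). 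Statement (v) then follows from (iv) together with the uniform bound $\|\theta(\A_k,Du;\cdot)\|_{L^\infty(\Omega,|Du|)}\leq\|\A\|_\infty$: by Banach--Alaoglu every subsequence admits a weak${}^*$ cluster point in $L^\infty(\Omega,|Du|)$, and testing against $\varphi\in C_c(\Omega)$ and invoking (iv) together with the density of $C_c(\Omega)$ in $L^1(\Omega,|Du|)$ identifies each such cluster point with $\theta(\A,Du;\cdot)$, forcing the whole sequence to converge weakly${}^*$.
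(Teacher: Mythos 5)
Your construction and the broad strategy match the paper's: the \(\A_k\) are the Chen--Frid localized mollifications (a partition of unity with piecewise mollification parameters, which is what your ``cut-offs along an exhaustion'' amounts to), (i)--(ii) are routine, the factor \(1/2\) in (iii) comes from the symmetric splitting of the mollified singular mass across a rectifiable set, and (v) follows from (iv) plus the uniform \(L^\infty(\Omega,|Du|)\) bound via Banach--Alaoglu and uniqueness of cluster points. The difference is the order of deduction. The paper isolates a single lemma,
\[
\lim_{k}\int_\Omega u\,\varphi\,\Div\A_k\,dx=\int_\Omega u^*\varphi\,d\Div\A
\qquad\text{for every }u\in BV(\Omega)\cap L^\infty(\Omega),
\]
proved by moving the mollifier onto \(u\varphi\) and invoking the classical \(\Haus{N-1}\)-a.e.\ convergence of mollifications of a \(BV\) function to its precise representative; both (iii) (take \(u=\chi_\omega\), so that \(u^*=1/2\) on \(\partial\omega\)) and (iv) then drop out at once. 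You instead prove (iii) first, by blow-up at \(\Haus{N-1}\)-a.e.\ point of \(\Sigma\) together with the symmetry of \(\rho_\eps\) --- a correct, self-contained re-proof of the special case \(u=\chi_{\Omega_i}\) of the lemma above --- and then want to deduce (iv) from (iii).

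That last implication is the one step that is not justified as written. Statement (iii) controls the normal traces of \(\A_k\) on \(\Sigma\), whereas the factor \(u^*\) in (iv) is produced by the jump of \(u\) across \(J_u\) interacting with the spread-out measure \(\Div\A_k\,\LLN\); ``applying (iii) to the jump set of \(\Div\A\)'' does not by itself yield \(\int u^*\varphi\,d\Div\A\). What is actually needed is the equal-splitting statement for \(u\) itself, namely \(\rho_\eps\ast(u\varphi)\to u^*\varphi\) \(\Haus{N-1}\)-a.e., used on \(J_u\cap\Theta^r_{\A}\) (where the approximate tangent planes of \(J_u\) and of the rectifiable part of \(\Theta_{\A}\) coincide \(\Haus{N-1}\)-a.e., so your blow-up picture still applies). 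Since your blow-up machinery proves exactly this when run on \(u\) instead of on \(\A\), the hole is one of bookkeeping rather than of substance; but you should either prove the displayed lemma directly (as the paper does) or redo the blow-up on \(J_u\), rather than cite (iii).
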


\begin{remark}
	It is not difficult to show that a similar approximation result holds also
	for \(\A\in\DMlocloc\) with a sequence \((\A_k)\) in
	\(C^\infty(\Omega,\R^N)\). 
\end{remark}

\begin{proof}
(i) This part is proved in \cite[Theorem 1.2]{ChenFrid}.
We just recall, for later use, that
for every $k$
the vector field \(\A_k\) is of the form
\begin{equation}\label{f:Akk}
\A_k = \sum_{i=1}^\infty \rho_{\eps_i} \ast (\A \varphi_i),
\end{equation}
where \((\varphi_i)\) is a partition of unity subordinate to
a locally finite covering of \(\Omega\)
depending on $k$ and, for every $i$,
\(\eps_i \in (0, 1/k)\) 
is chosen in such a way that
\begin{equation}
\label{f:epsi}
\int_\Omega \left|
\rho_{\eps_i} \ast (\A\cdot\nabla\varphi_i) - \A\cdot \nabla \varphi_i
\right|\, dx\leq \frac{1}{k\, 2^i}
\end{equation}	
(see \cite{ChenFrid}, formula (1.8)).	

\smallskip
(ii) From (i) we have that 
\[
\lim_{k\to +\infty}\int_\Omega \A_k\cdot\nabla\varphi\, dx 
= \int_\Omega \A\cdot\nabla\varphi\, dx
\qquad
\forall \varphi\in C^1_c(\Omega),
\]
hence (ii) follows from  
the density of \(C^1_c(\Omega)\) in \(C_0(\Omega)\)
in the norm of \(L^\infty(\Omega)\)
and the bound
\(\sup_k \int_\Omega |\Div \A_k|\, dx < +\infty\).

\smallskip	
(iii)
As a first step we prove that,
for every $u\in\BVL[\Omega]$,
\begin{equation}
\label{f:ivp0}
\lim_{k\to +\infty}\int_{\Omega} u \, \varphi \, \Div\A_k\, dx
= \int_{\Omega} \prec[*]{u} \,\varphi \, d \Div\A,
\qquad \forall \varphi\in C_c(\Omega).
\end{equation}
Specifically, from the definition \eqref{f:Akk} of $\A_k$ 
and the identity $\sum_i \nabla\varphi_i = 0$
we have that
\[
\Div \A_k = \sum_i \rho_{\eps_i}\ast (\varphi_i \Div\A)
+ \sum_i \left[
\rho_{\eps_i} \ast (\A \cdot \nabla\varphi_i) - \A \cdot \nabla\varphi_i
\right].
\]
From the estimate \eqref{f:epsi} we have that
\[
\left|
\sum_i \int_\Omega u\, \varphi \left[\rho_{\eps_i} \ast (\A \cdot \nabla\varphi_i) - \A \cdot \nabla\varphi_i\right]\, dx
\right| < \frac{1}{k}\, \|\varphi\|_\infty\,\|u\|_\infty\,,
\]
and hence, to prove \eqref{f:ivp0}, it is enough to show that
\begin{equation}
\label{f:limkapp}
\lim_{k\to +\infty}
\sum_i \int_\Omega u\, \varphi \, \rho_{\eps_i} \ast (\varphi_i \Div \A)
=  
\int_{\Omega} \prec[*]{u} \,\varphi \, d \Div\A.
\end{equation}
On the other hand,
\[
\sum_i \int_\Omega u\, \varphi \, \rho_{\eps_i} \ast (\varphi_i \Div \A)
=
\sum_i \int_\Omega \rho_{\eps_i}\ast ( u\, \varphi) \, \varphi_i \, d \Div \A\,,
\]
hence \eqref{f:limkapp} follows by observing that
the functions $\rho_{\eps_i}\ast(u\,\varphi)$
converge pointwise
$\H^{N-1}$--a.e.\ in $\Omega$
to $\prec[*]{u}\varphi$,
so that
\[
\prec[*]{u}\varphi - \sum_i \varphi_i \rho_{\eps_i}\ast(u\, \varphi)
= \sum_i \varphi_i \left[
\prec[*]{u}\varphi -
 \rho_{\eps_i}\ast(u\,\varphi)
\right]
\to 0,
\qquad
\text{$|\Div\A|$-a.e.~in $\Omega$}.
\]

We remark that, as a consequence of \eqref{f:ivp0}, if 
\(E\Subset\Omega\) is a set of finite perimeter,
then
\begin{equation}
\label{f:ivp}
\lim_{k\to +\infty}\int_{\Omega} \chi_{E} \, \varphi \, \Div\A_k\, dx
= \int_{\Omega} \chi^*_{E} \,\varphi \, d \Div\A,
\qquad \forall \varphi\in C_c(\Omega).
\end{equation}

\smallskip	
Let us now prove (iii).
	Let \(\mset\Subset \Omega\) be a set of class \(C^1\).
	By the definition \eqref{f:disttr} of normal traces,
	by (i), (ii) and \eqref{f:ivp},
	for every \(\varphi\in C^\infty_c(\Omega)\) we have that
\[
\begin{split}
	\pscal{\Trace[]{\A_k}{\partial\mset}}{\varphi}
	& = \int_{\mset} \A_k \cdot \nabla\varphi\, dx
	+ \int_{\mset} \varphi \, \Div\A_k\, dx
	\\ & = 
	\int_{\Omega} \chi_{\mset} \, \A_k \cdot \nabla\varphi\, dx
	+ \int_{\Omega} \chi_{\mset} \, \varphi \, \Div\A_k\, dx\,,
\end{split}
\]
so that
\[
\begin{split}
\lim_{k\to +\infty} \pscal{\Trace[]{\A_k}{\partial\mset}}{\varphi}
& =
	\int_{\Omega} \chi_{\mset} \,\A \cdot \nabla\varphi\, dx
	+ \int_{\Omega} \chi^*_{\mset} \,\varphi \, d \Div\A
	\\ & =
	\int_{\mset} \A \cdot \nabla\varphi\, dx
	+ \int_{\mset} \varphi \, d\Div\A
	+ \frac{1}{2} \int_{\partial\mset} \varphi\, d\Div\A
\\ & =
\pscal{\Trace[]{\A}{\partial\mset}}{\varphi}
	+ \frac{1}{2} \pscal{\Div\A\res\partial\mset}{\varphi} 
\,.
	\end{split}
	\]
Hence, by \eqref{f:trA}, we have proved that
\[
\lim_{k\to +\infty} \Trm{\A_k}{\partial\mset}
= \Trm{\A}{\partial\mset}
+\frac{1}{2}\left[\Trp{\A}{\partial\mset}-\Trm{\A}{\partial\mset}\right]
= \Trace[*]{\A}{\partial\mset}\,,
\]
in the sense of distributions.
Using the arguments of Section~\ref{distrtraces}, this relation
can be extended to the countably $\Haus{N-1}$-rectifiable set $\Sigma$.
By a density argument as in (ii), this relation hold for every
$\varphi\in C_c(\Omega)$, hence (iii) holds true for $\Trm{\A_k}{\Sigma}$.
Finally, a similar computation holds for \(\Trp{\A_k}{\Sigma}\).

\smallskip
(iv)	
Using the passage to the limit
in \eqref{f:ivp0} we obtain straightforwardly
\[
\begin{split}
\lim_{k\to +\infty}\pscal{\spair{\A_k, Du}}{\varphi}
& = 
\lim_{k\to +\infty} \left[-\int_\Omega \prec[*]{u}\, \varphi\, \Div\A_k \, dx
- \int_\Omega u\, \A_k\cdot \nabla\varphi\, dx\right]
\\
& =
-\int_\Omega \prec[*]{u}\, \varphi\, d\Div\A
- \int_\Omega u\, \A\cdot \nabla\varphi\, dx
\\
& =
\pscal{\spair{\A, Du}}{\varphi}
\end{split}
\]
for every $\varphi\in C^1_c(\Omega)$.
The validity of this relation for $\varphi\in C_c(\Omega)$
follows from \eqref{f:astim} and the fact that 
the sequence $(\A_k)$ is
bounded in $L^\infty(\Omega, \R^N)$.

\smallskip
(v)
Using the definition \eqref{f:polar} of the density \(\theta\),
we have that, for every \(\varphi\in C_c(\Omega)\),
\[
\begin{split}
\lim_{k\to +\infty}\int_\Omega \theta(\A_k, Du, x) \varphi(x)\, d|Du|
& =
\lim_{k\to +\infty}\pscal{\spair{\A_k, Du}}{\varphi}
\\ & =
\pscal{\spair{\A, Du}}{\varphi}
=
\int_\Omega \theta(\A, Du, x) \varphi(x)\, d|Du|\,.
\end{split}
\]
Since, by \eqref{f:astim} and \eqref{f:polar}, 
the sequence $(\theta(\A_k, Du, \cdot))$ is
bounded in $L^\infty(\Omega, |Du|)$,
then (v) follows.
\end{proof}

\section{Coarea formula for generalized pairings}
\label{s:coarea}

This section is devoted to the proof of the coarea formula
for the $\lambda$-pairing,
and a related slicing result for its density $\theta_\lambda$.

\begin{theorem}[Coarea formula]\label{t:coarea}
Let $\A\in\DMloc[\Omega]$ and let $u\in\BVAloc$.
Then $\chiut\in BV(\Omega)$ for $\LLU$-a.e.\ $t\in\R$, and
	\begin{equation}\label{f:gsplit}
	\pscal{\pair{\A, Du}}{\varphi} = \int_{\R} \pscal{\pair{\A, 
	D\chiut}}{\varphi}\, dt,
	\qquad\forall \varphi\in C_0(\Omega)\,.
	\end{equation}
\end{theorem}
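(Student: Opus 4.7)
The plan is to reduce \eqref{f:gsplit}, via the decomposition \eqref{f:resto}, to two ingredients: the coarea formula for the standard pairing (already established in \cite{CD3}) and a direct coarea identity for the jump remainder
\[
R(u):=\left(\tfrac{1}{2}-\lambda\right)(\upiu-\umeno)\,\Div\A\res J_u\,.
\]
Indeed, \eqref{f:resto} reads $\pair{\A,Du}=\spair{\A,Du}+R(u)$, and applied to $\chiut$ it gives $\pair{\A,D\chiut}=\spair{\A,D\chiut}+R(\chiut)$; combining the standard pairing coarea formula from \cite{CD3} with the identity
\[
\pscal{R(u)}{\varphi}=\int_\R \pscal{R(\chiut)}{\varphi}\,dt,\qquad\varphi\in C_0(\Omega),
\]
yields \eqref{f:gsplit}.

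First I would reduce to bounded $u$ by truncation. Setting $u_k:=T_k(u)$ we have $\chi_{\{u_k>t\}}=\chiut$ for $|t|<k$ and constant otherwise, so by Remark~\ref{r:trunc} the measures $\pair{\A,Du_k}$ converge weakly${}^*$ to $\pair{\A,Du}$. It then suffices to verify \eqref{f:gsplit} for bounded $u$: the passage to the limit on the right-hand side is controlled by the pointwise bound $|\pair{\A,D\chi_{\{u_k>t\}}}|(\Omega)\le\|\A\|_\infty P(\{u>t\},\Omega)$ coming from \eqref{f:astim} and $P(\{u_k>t\})\le P(\{u>t\})$, combined with the classical coarea identity $\int_\R P(\{u>t\},\Omega)\,dt=|Du|(\Omega)$, which supplies $L^1(\R)$ domination for Lebesgue's theorem.

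For $u\in\BVLloc$, I would fix on $J_u$ the orientation with $\upiu=\uint$ and analyse the structure of $\chiut$. For $\Haus{N-1}$-a.e.\ $x\in J_u$, classical BV theory gives $x\in J_{\chiut}$ precisely when $\umeno(x)<t<\upiu(x)$; on that range the induced orientation $\nu_{\chiut}(x)$ coincides with $\nu_u(x)$ and $\chiut[i](x)=1$, $\chiut[e](x)=0$, so the localization property of normal traces recalled in Section~\ref{distrtraces} yields
\[
\Trp{\A}{J_{\chiut}}(x)=\Trp{\A}{J_u}(x),\qquad \Trm{\A}{J_{\chiut}}(x)=\Trm{\A}{J_u}(x).
\]
Inserting into \eqref{f:trA} and applying Fubini in $t$, together with the elementary identity $\int_\R\chi_{(\umeno(x),\upiu(x))}(t)\,dt=\upiu(x)-\umeno(x)$, one obtains
\[
\int_\R R(\chiut)\,dt=\left(\tfrac{1}{2}-\lambda\right)(\upiu-\umeno)\,[\Trp{\A}{J_u}-\Trm{\A}{J_u}]\,\Haus{N-1}\res J_u,
\]
which, by \eqref{f:trA} again, is exactly $R(u)$.

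The main obstacle is the clean Fubini justification for the measure-valued family $t\mapsto\pair{\A,D\chiut}$. This requires joint Borel measurability of $(x,t)\mapsto\chi_{\{\umeno(x)<t<\upiu(x)\}}$ on $J_u\times\R$ (immediate, since $\umeno$ and $\upiu$ are Borel) and joint $\Haus{N-1}\otimes\Leb{1}$-measurability of the normal-trace functions $\Trp{\A}{J_{\chiut}}$, $\Trm{\A}{J_{\chiut}}$, which can be extracted from the rectifiable-covering construction in Section~\ref{distrtraces} together with the $L^\infty$-estimate \eqref{f:infestrace}; global $L^1(\R)$ integrability is supplied by \eqref{f:astim} and the classical coarea formula, as already used in the truncation step.
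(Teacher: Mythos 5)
Your proof is correct, but it follows a genuinely different route from the paper's. You split $\pair{\A,Du}=\spair{\A,Du}+R(u)$ via \eqref{f:resto}, quote the coarea formula for the \emph{standard} pairing from \cite{CD3}, and then verify a coarea identity for the jump remainder $R$ by hand. The paper instead works directly from Definition~\ref{d:pair}: it writes $\int_\R\pscal{\pair{\A,D\chiut}}{\varphi}\,dt$ as two layer-cake integrals and uses \cite[Lemma~2.2]{DCFV2} --- the fact that for $\Leb{1}$-a.e.\ $t$ one has $\chiut[\pm]=\chi_{\{u^\pm>t\}}$ outside an $\hh$-null set, hence $|\Div\A|$-a.e.\ --- to get $\int_0^{+\infty}\pchiut\,dt=\prec{u}$ and recombine everything into the definition of $\pair{\A,Du}$. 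Note that the level-set description you attribute to ``classical BV theory'' is precisely this lemma, so both arguments hinge on the same essential ingredient; your version is more modular (the genuinely new content is isolated in $R$) at the price of importing the \cite{CD3} coarea theorem as a black box, while the paper's is self-contained and avoids any discussion of $J_{\chiut}$ and its traces. Your truncation step is essentially identical to the paper's. One simplification worth making: the measurability concerns you raise about $t\mapsto\Trace[i,e]{\A}{J_{\chiut}}$ are avoidable, since \eqref{f:resto} applied to $\chiut$ gives directly $R(\chiut)=\bigl(\tfrac12-\lambda\bigr)\chi_{\{u^-\le t< u^+\}}\,\Div\A\res J_u$ as measures, and Fubini for the product measure $(|\Div\A|\res J_u)\otimes\Leb{1}$ --- legitimate because $\int_{J_u}(u^+-u^-)\,d|\Div\A|<+\infty$ by (the proof of) Lemma~\ref{l:l1} --- yields $\int_\R R(\chiut)\,dt=R(u)$ with no reference to normal traces at all.
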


\begin{proof}
Since 
$\pair{\A, Du}$ and $\pair{\A, D\chiut}$
are measures in \(\Omega\) for $\LLU$-a.e.\ $t\in\R$,
it is enough to prove \eqref{f:gsplit} for $\varphi\in C^\infty_c(\Omega)$.

	Let us first consider the case \(u\in L^\infty(\Omega)\).
	By possibly replacing \(u\) with \(u +\|u\|_\infty\),
	it is not restrictive to assume that \(u\geq 0\).
Given a test function \(\varphi\in C^\infty_c(\Omega)\),
we have that
	\begin{equation}
	\label{f:gI1I2}
	\begin{split}
	\int_{\R} & \pscal{\pair{\A, D\chiut}}{\varphi}\, dt  \\
	& =
	-\int_0^{+\infty} \left(
	\int_{\Omega} \pchiut \varphi\, d\Div\A
	\right)\, dt -
	\int_0^{+\infty} \left(
	\int_{\Omega} \chiut \A\cdot\nabla\varphi\, dx
	\right)\, dt \\
	& =  
	-\int_0^{+\infty} \left(
		\int_{\Omega} \pchiut \varphi\, d\Div\A
		\right)\, dt -
		\int_{\Omega} u\, \A\cdot\nabla\varphi\, dx\,.
	\end{split}
	\end{equation}
	
	Moreover, by \cite[Lemma~2.2]{DCFV2}, we have that,
	for \(\mathcal{L}^1\)-a.e.\ \(t\in\R\),
	there exists a Borel set \(N_t\subset\Omega\), with \(\hh(N_t) = 0\),
	such that
\[
\forall x\in \Omega\setminus N_t:
	\qquad
\chiut[-](x) = \chi_{\{u^- > t\}}(x),
\quad
\chiut[+](x) = \chi_{\{u^+ > t\}}(x),
\]
	so that, since \(|\Div\A| \ll \hh\), we obtain that
\[
	\pchiut(x)=(1-\lambda(x)) \chi_{\{u^- > t\}}(x) 
	+\lambda(x)\chi_{\{u^+ > t\}}(x)\,,
	\qquad
	\text{for \(|\Div\A|\)-a.e.}\ x\in\Omega.
\]
	Hence, we get 
	\begin{equation}
	\label{f:gI1}
	\begin{split}
	& \int_0^{+\infty} \left(
			\int_{\Omega} \pchiut \varphi\, d\Div\A
			\right)\, dt \\
	& = 
	\int_0^{+\infty} \left(\int_{\Omega}
	[(1-\lambda) \chi_{\{u^- > t\}} +\lambda\chi_{\{u^+ > 
		t\}}]\, \varphi\, d\Div\A
	\right)\, dt \\ & 
	=
	\int_\Omega (1-\lambda)\varphi \left(\int_0^{+\infty}\chi_{\{u^- > 
	t\}}\, dt\right)\, d\Div\A 
	+\int_\Omega\lambda\varphi\left(\int_0^{+\infty}\chi_{\{u^+ > 
			t\}}\, dt\right)\, d\Div\A \\
	& = \int_\Omega \prec{u} \, \varphi\, d\Div\A\,.
	\end{split}
	\end{equation}
	As a consequence, from \eqref{f:gI1I2}, \eqref{f:gI1}  and
	the definition \eqref{f:gpairing} of \(\pair{\A, Du}\),
	we conclude that \eqref{f:gsplit} holds
	for every test function \(\varphi\in C^\infty_c(\Omega)\)
and for every $u\in\BVL$.

	Finally, the general case $u\in\BVA$ follows
	applying the previous step to the truncated
	functions \(u_k := T_k(u)\).
Specifically, \eqref{f:gsplit} gives, for every $k>0$,
\begin{equation}\label{f:gsplit3}
\pscal{\pair{\A, Du_k}}{\varphi} = \int_{\R} \pscal{\pair{\A, 
D\chi_{\{u_k > t\}}}}{\varphi}\, dt,
\qquad\forall \varphi\in C^1_c(\Omega).
\end{equation}
By Remark~\ref{r:trunc}, the left-hand side of \eqref{f:gsplit3}
converges to $\pscal{\pair{\A, Du}}{\varphi}$.
On the other hand, since
\begin{gather*}
\{u > t\} = \{u_k > t\},\quad
D\chiut = D\chi_{\{u_k > t\}}\,,
\qquad \forall t\in [-k,k),
\\
D\chi_{\{u_k > t\}} = 0\,,
\qquad \forall t \in \R\setminus [-k, k),
\end{gather*}
the right-hand side in \eqref{f:gsplit3} is equal to
\begin{equation}\label{f:intk}
\int_{-k}^k \pscal{\pair{\A, 
D\chiut}}{\varphi}\, dt\,.
\end{equation}
By the estimate \eqref{f:astim} we have that
\[
\left|\pscal{\pair{\A, D\chiut}}{\varphi}\right|
\leq 2 \|\varphi\|_\infty \|\A\|_\infty |D\chiut|(\Omega)\,,
\]
and hence, by the coarea formula in $BV$ and the
Lebesgue Dominated Convergence Theorem,
the integral in \eqref{f:intk} converges to the right-hand side
of \eqref{f:gsplit} as $k\to +\infty$.
\end{proof}

\begin{proposition}\label{l:theta}
Let $\A\in\DMloc[\Omega]$ and  $u\in \BVLloc[\Omega]$.
Then
\begin{equation}
\label{f:gthetaut}
\text{for $\mathcal{L}^1$-a.e.}\ t\in\R:
\quad
\theta_\lambda(\A, Du, x) =
\theta_\lambda(\A, D\chiut, x)
\quad
\text{for \(|D\chiut|\)-a.e.}\ x\in\Omega\,.
\end{equation}
\end{proposition}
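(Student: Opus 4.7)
The plan is to combine the coarea formula just proved in Theorem~\ref{t:coarea} with the classical coarea formula in $BV$, and then extract a pointwise identity by a density argument. First, I note that by \eqref{f:polar} both $\theta_\lambda(\A, Du, \cdot)$ and $\theta_\lambda(\A, D\chiut, \cdot)$ are Radon--Nikod\'ym derivatives, bounded by $\|\A\|_\infty$ thanks to \eqref{f:astim}. The standard coarea formula for $BV$ functions reads
\[
\int_\Omega f\, d|Du| \;=\; \int_\R \int_\Omega f\, d|D\chiut|\, dt
\]
for every nonnegative Borel $f$; in particular, since $|Du|(N)=0$ implies $|D\chiut|(N)=0$ for $\Leb{1}$-a.e.\ $t$, the function $\theta_\lambda(\A, Du, \cdot)$ is well-defined $|D\chiut|$-a.e.\ for $\Leb{1}$-a.e.\ $t\in\R$.

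Next, I would fix $\varphi\in C_c(\Omega)$ and compute the left-hand side of \eqref{f:gsplit} in two different ways. On one hand, Theorem~\ref{t:coarea} together with \eqref{f:polar} gives
\[
\int_\Omega \varphi\, \theta_\lambda(\A,Du,\cdot)\, d|Du|
=
\int_\R\left(\int_\Omega \varphi\, \theta_\lambda(\A,D\chiut,\cdot)\, d|D\chiut|\right) dt.
\]
On the other hand, the $BV$ coarea formula applied to $f=\varphi\,\theta_\lambda(\A,Du,\cdot)\in L^\infty(\Omega,|Du|)$ yields
\[
\int_\Omega \varphi\, \theta_\lambda(\A,Du,\cdot)\, d|Du|
=
\int_\R\left(\int_\Omega \varphi\, \theta_\lambda(\A,Du,\cdot)\, d|D\chiut|\right) dt.
\]
Subtracting the two identities produces
\[
\int_\R\left(\int_\Omega \varphi\,[\theta_\lambda(\A,Du,\cdot)-\theta_\lambda(\A,D\chiut,\cdot)]\, d|D\chiut|\right) dt = 0.
\]

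The final step is a standard density argument to upgrade this integral identity to the pointwise statement. I would pick a countable family $\{\varphi_n\}\subset C_c(\Omega)$ that is dense in $C_c(\Omega)$ with respect to uniform convergence on compact sets. For each $n$, the above vanishing implies the existence of an $\Leb{1}$-null set $E_n\subset\R$ such that for all $t\notin E_n$
\[
\int_\Omega \varphi_n\,[\theta_\lambda(\A,Du,\cdot)-\theta_\lambda(\A,D\chiut,\cdot)]\, d|D\chiut| = 0.
\]
Setting $E:=\bigcup_n E_n$, for every $t\notin E$ this identity holds simultaneously for all $\varphi_n$, hence, by density and the uniform bound $\|\theta_\lambda\|_\infty\leq\|\A\|_\infty$, for every $\varphi\in C_c(\Omega)$. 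This forces $\theta_\lambda(\A,Du,\cdot)=\theta_\lambda(\A,D\chiut,\cdot)$ $|D\chiut|$-a.e.\ in $\Omega$, proving \eqref{f:gthetaut}. The only mildly delicate point is ensuring that the $BV$ coarea legitimately applies to $f=\varphi\,\theta_\lambda(\A,Du,\cdot)$, which is granted by the $L^\infty(\Omega,|Du|)$ bound on $\theta_\lambda$ combined with the finiteness of $|Du|$ on the support of $\varphi$.
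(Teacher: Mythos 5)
Your overall strategy (compare the pairing coarea formula of Theorem~\ref{t:coarea} with the classical $BV$ coarea identity $|Du| = \int_\R |D\chiut|\,dt$, then extract a pointwise statement) is the right one, and it is essentially the skeleton of the paper's argument. However, there is a genuine gap at the decisive step. After subtracting your two identities you obtain
\[
\int_\R h_\varphi(t)\,dt = 0,
\qquad
h_\varphi(t) := \int_\Omega \varphi\,\bigl[\theta_\lambda(\A,Du,\cdot)-\theta_\lambda(\A,D\chiut,\cdot)\bigr]\, d|D\chiut|,
\]
and you then assert that this forces $h_{\varphi_n}(t)=0$ for $\Leb{1}$-a.e.\ $t$. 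That implication is false: the vanishing of the integral of $h_\varphi$ over all of $\R$ does not imply that $h_\varphi$ vanishes a.e., since the integrand $\theta_\lambda(\A,Du,\cdot)-\theta_\lambda(\A,D\chiut,\cdot)$ has no sign (and $\varphi$ is not even assumed nonnegative). To run a density/differentiation argument in $t$ you need the stronger statement $\int_a^b h_\varphi(t)\,dt = 0$ for \emph{every} interval $(a,b)$.

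The paper obtains exactly this localization by applying both coarea formulas to the two-sided truncation $v := \max\{\min\{u,b\},a\}$, for which $D\chi_{\{v>t\}}=D\chiut$ on $[a,b)$ and $D\chi_{\{v>t\}}=0$ outside. But this is where the real work hides: one must first know that $\theta_\lambda(\A,Dv,x)=\theta_\lambda(\A,Du,x)$ for $|Dv|$-a.e.\ $x$, which is \emph{not} automatic from the definition of $\theta_\lambda$ as a Radon--Nikod\'ym derivative. The paper proves it by combining the identity of polar vectors $\tfrac{dDu}{d|Du|}=\tfrac{dDv}{d|Dv|}$ $|Dv|$-a.e.\ with the smooth approximation $\A_k$ of Proposition~\ref{p:approxinf}: for smooth fields $\theta(\A_k,Du,x)=\A_k(x)\cdot\tfrac{dDu}{d|Du|}(x)$, and the weak${}^*$ convergence in (v) of that proposition transfers the equality to $\A$, after which \eqref{f:resto} handles the jump correction. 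Your proposal omits this ingredient entirely, so as written the argument does not close; supplying the truncation step (and the $\theta_\lambda(\A,Du,\cdot)=\theta_\lambda(\A,Dv,\cdot)$ identity behind it) would turn it into the paper's proof.
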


\begin{proof}
Thanks to Proposition~\ref{p:approxinf}(iv),
the proof can be done following the lines of
\cite[Proposition~2.7(iii)]{Anz}.
For the reader's convenience, we recall here the main points.

Given two real numbers $a < b$, the function $v := \max\{\min\{u, b\}, a\}$ 
satisfies
\begin{equation}\label{f:uv}
\begin{gathered}
\{u > t\} = \{v > t\},\quad
D\chiut = D\chi_{\{v > t\}}\,,
\qquad \forall t\in [a,b),
\\
D\chi_{\{v > t\}} = 0\,,
\qquad \forall t < a, \  t \geq b.
\end{gathered}
\end{equation}
Since
\[
\frac{d Du}{d|Du|} = \frac{d D\chiut}{d|D\chiut|}\,,
\qquad
\text{$|D\chiut|$-a.e.\ in}\ \Omega
\]
(see \cite[\S 4.1.4, Theorem~2(i)]{GMS1}),
we deduce that
\[
\frac{d Du}{d|Du|}
= \frac{d Dv}{d|Dv|}
\qquad
\text{$|Dv|$-a.e.\ in}\ \Omega.
\]
Let $(\A_k)\subset C^\infty(\Omega,\R^N)\cap L^\infty(\Omega,\R^N)$ be the sequence of
smooth vector fields approximating $\A$ as in
Proposition~\ref{p:approxinf}. 
Since, by \cite[Proposition~2.3]{Anz}, we have
\[
\theta(\A_k, Du, x) =
\A_k(x)\cdot\frac{d Du}{d|Du|}(x)
=
\A_k(x)\cdot\frac{d Dv}{d|Dv|}(x)
=\theta(\A_k, Dv, x)
\qquad
\text{$|Dv|$-a.e.\ in}\ \Omega,
\]
then, from Proposition~\ref{p:approxinf}(v)
and by the uniqueness of the limit in the $L^\infty(\Omega, |Dv|)$
weak${}^*$ topology, we obtain that
\[
\theta(\A, Du, x) = \theta(\A, Dv, x)
\qquad
\text{$|Dv|$-a.e.\ in}\ \Omega.
\]
Recalling the definition \eqref{f:polar} of $\theta_\lambda$
and the relation \eqref{f:resto}, we conclude that
\begin{equation}\label{f:thetauv}
\theta_\lambda(\A, Du, x) = \theta_\lambda(\A, Dv, x)
\qquad
\text{$|Dv|$-a.e.\ in}\ \Omega.
\end{equation}
Specifically, $\theta_\lambda(\A, Du, x) = \theta(\A, Du, x)
= \theta(\A, Dv, x) = \theta_\lambda(\A, Dv, x)$
for $|D^d v|$-a.e.\ $x\in\Omega$,
whereas, by Proposition~\ref{p:pairing}
(and using the notations therein)
and the inclusion $J_v \subset J_u$,
$\theta_\lambda(\A, Du, x) 
= (1-\lambda)\Trp{\A}{J_u} + \lambda\Trm{\A}{J_u}
= \theta_\lambda(\A, Dv, x)$
for $|D^j v|$-a.e.\ $x\in\Omega$.

Given $\varphi\in C^\infty_c(\Omega)$,
let us compute \(\pscal{\pair{\A, Dv}}{\varphi}\).
By the definition of $\theta_\lambda(\A, Dv, x)$, 
equality \eqref{f:thetauv}, the coarea formula in BV
(see \cite[Theorem~3.40]{AFP})
and \eqref{f:uv} it holds
\begin{equation}
\label{f:eqv1}
\begin{split}
\pscal{\pair{\A, Dv}}{\varphi}
& =
\int_\Omega \theta_\lambda(\A, Dv, x)\, \varphi(x)\, d |Dv|
\\ & =
\int_\Omega \theta_\lambda(\A, Du, x)\, \varphi(x)\, d |Dv|
\\ & =
\int_a^b dt
\int_\Omega \theta_\lambda(\A, Du, x)\, \varphi(x)\, d |D\chiut|\,.
\end{split}
\end{equation}
On the other hand, by the coarea formula \eqref{f:gsplit}
and \eqref{f:uv},
it holds
\begin{equation}
\label{f:eqv2}
\begin{split}
\pscal{\pair{\A, Dv}}{\varphi}
& =
\int_\R 
\pscal{\pair{\A, D\chi_{\{v > t\}}}}{\varphi}\, dt
\\ & =
\int_a^b
\pscal{\pair{\A, D\chi_{\{u > t\}}}}{\varphi}\, dt
\\ & =
\int_a^b dt
\int_\Omega \theta_\lambda(\A, D\chiut, x)\, \varphi(x)\, d |D\chiut|\,.
\end{split}
\end{equation}
Comparing \eqref{f:eqv1} with \eqref{f:eqv2},
we finally conclude that, for every $a < b$,
\[
\int_a^b dt
\int_\Omega \theta_\lambda(\A, Du, x)\, \varphi(x)\, d |D\chiut|
=
\int_a^b dt
\int_\Omega \theta_\lambda(\A, D\chiut, x)\, \varphi(x)\, d |D\chiut|\,,
\]
so that \eqref{f:gthetaut} follows.
\end{proof}

\section{Chain rule, Leibniz and Gauss--Green formulas for generalized pairings}
\label{s:chain}

In this section we show that some relevant formulas, proved in \cite{CD3} for
the standard pairing, remain valid for general $\lambda$--pairings.

\begin{proposition}[Chain Rule]\label{p:comp} 
Let $\A\in\DMloc[\Omega]$ and let $u\in \BVLloc[\Omega]$. 
Let $h:\R\to\R$ be a Lipschitz function.
Then it holds:
\begin{itemize}
	\item[(i)]
\(\pair{\A, D[h(u)]}^d = \spair{\A, D[h(u)]}^d\),
and
	\(\pair{\A, D[h(u)]}^a = h'(\ut)\, \A \cdot \nabla u \, \LLN\).
\end{itemize}
Moreover, if \(h\) is non-decreasing, then
\begin{itemize}	
	\item[(ii)]
	\(\displaystyle
	\pair{\A, D[h(u)]}^j = \frac{h(u^+) - h(u^-)}{u^+-u^-} \pair{\A, Du}^j;\)
	\item[(iii)]
	$\theta_\lambda(\A,D[h(u)],x)=\theta_\lambda(\A,Du,x)$,
	for \(|D[h(u)]|\)-a.e.\ $x\in\Omega$.
	
\end{itemize}
The same characterization holds if
$u\in\BVLlocloc$ and
$h\colon I \to\R$ is a locally Lipschitz function
such that $u(\Omega)\Subset I$.
\end{proposition}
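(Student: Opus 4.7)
The plan is to derive everything from Proposition~\ref{p:pairing} combined with the decomposition \eqref{f:resto}, the chain rule for the standard pairing proved in \cite[Theorem~3.3]{CD3}, and the classical BV chain rule. For part (i), since $h$ is Lipschitz and $u\in\BVLloc$, the composition $h(u)$ belongs to $\BVLloc$, so Proposition~\ref{p:pairing} applies directly with $h(u)$ in place of $u$. This yields at once $\pair{\A, D[h(u)]}^d = \spair{\A, D[h(u)]}^d$ and $\pair{\A, D[h(u)]}^a = \A\cdot\nabla[h(u)]\,\Leb{N}$. The BV chain rule $\nabla[h(u)] = h'(\ut)\nabla u$ $\Leb{N}$-a.e.\ in $\Omega$ completes the identification of the absolutely continuous part.

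For part (ii), assume $h$ non-decreasing. Then classical BV theory yields $J_{h(u)}\subseteq J_u$ (up to $\hh$-null sets), with $[h(u)]^\pm = h(u^\pm)$ and the same orientation $\nu_{h(u)} = \nu_u$ on $J_{h(u)}$. By the localization property of the weak normal traces recalled in Section~\ref{distrtraces}, $\Trace[i]{\A}{J_{h(u)}} = \Trace[i]{\A}{J_u}$ and $\Trace[e]{\A}{J_{h(u)}} = \Trace[e]{\A}{J_u}$ $\hh$-a.e.\ on $J_{h(u)}$. Applying the jump formula in Proposition~\ref{p:pairing} to $h(u)$ gives
\[
\pair{\A, D[h(u)]}^j = \bigl[(1-\lambda)\Trp{\A}{J_u} + \lambda\Trm{\A}{J_u}\bigr]\,(h(u^+)-h(u^-))\,\hh\res J_{h(u)},
\]
which, compared with the corresponding expression for $\pair{\A, Du}^j$, equals $\tfrac{h(u^+)-h(u^-)}{u^+-u^-}\pair{\A,Du}^j$ once the ratio is extended by zero on $J_u\setminus J_{h(u)}$, where $h(u^+)=h(u^-)$.

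For part (iii), decompose $|D[h(u)]|$ into its absolutely continuous, Cantor and jump components. On the diffuse part, part (i) combined with the chain rule for the standard pairing in \cite[Theorem~3.3]{CD3} yields $\theta(\A, D[h(u)], x) = \theta(\A, Du, x)$ $|D^d[h(u)]|$-a.e.; since $\pair{\A,\cdot}^d = \spair{\A,\cdot}^d$ by Proposition~\ref{p:pairing}, the identity transfers to $\theta_\lambda$. On the jump part, combining part (ii) with $|D[h(u)]|\res J_{h(u)} = (h(u^+)-h(u^-))\hh\res J_{h(u)}$ shows that $\theta_\lambda(\A, D[h(u)], x) = (1-\lambda)\Trp{\A}{J_u} + \lambda\Trm{\A}{J_u}$ $\hh$-a.e.\ on $J_{h(u)}$, which by Proposition~\ref{p:pairing} agrees with $\theta_\lambda(\A, Du, x)$ on that set. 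Finally, the locally Lipschitz case with $u(\Omega)\Subset I$ reduces to the Lipschitz setting by extending $h$ outside a compact neighborhood of $u(\Omega)$ in $I$ to a globally Lipschitz (and, when needed, still non-decreasing) function on $\R$.

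The main obstacle lies in the Cantor-part analysis of part (iii): the BV chain rule $D^c[h(u)] = h'(\ut)D^c u$ requires a choice of $h'(\ut)$ that is well-defined only $|D^c u|$-a.e., and the monotonicity of $h$ is needed to guarantee that the sign of the Radon--Nikod\'ym direction is preserved, so that $dD^c[h(u)]/d|D^c[h(u)]|$ coincides with $dD^c u/d|D^c u|$ on the support of $|D^c[h(u)]|$. This is precisely what allows the density identification to be transferred from the standard pairing (where it is obtained in \cite{CD3} via the smooth approximation of Proposition~\ref{p:approxinf}) to the generalized one, after correcting by the jump term in \eqref{f:resto}.
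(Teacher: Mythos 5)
Your proof is correct, and parts (i)--(ii) follow essentially the same path as the paper (apply Proposition~\ref{p:pairing} to $h(u)$, invoke the $BV$ chain rule, use $[h(u)]^{\pm}=h(u^{\pm})$ for non-decreasing $h$ together with locality of the normal traces on $J_{h(u)}\subseteq J_u$). Where you genuinely diverge is part (iii). The paper proves the density identity globally: for strictly increasing $h$ it uses the level-set identity $\{u>t\}=\{h(u)>h(t)\}$ together with the slicing result of Proposition~\ref{l:theta} (itself a consequence of the coarea formula of Theorem~\ref{t:coarea}), and then reduces the merely non-decreasing case to the strictly increasing one by perturbing to $h_\varepsilon(t)=h(t)+\varepsilon t$ and letting $\varepsilon\to 0$. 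You instead split $|D[h(u)]|$ into diffuse and jump components, read off the jump part explicitly from Proposition~\ref{p:pairing} and part (ii), and import the diffuse-part identity $\theta(\A,D[h(u)],\cdot)=\theta(\A,Du,\cdot)$ from the chain rule for the \emph{standard} pairing in \cite{CD3}, transferring it to $\theta_\lambda$ via $\pair{\A,\cdot}^d=\spair{\A,\cdot}^d$ and the absolute continuity $|D^d[h(u)]|\ll|D^d u|$. This is a legitimate shortcut: it avoids both the slicing lemma and the $h_\varepsilon$ perturbation, and it makes transparent that the only new content of the generalized statement lives on $J_u$, where everything is explicit. The price is a heavier reliance on the external input from \cite{CD3} -- precisely the Cantor-part density statement, which is the genuinely delicate point and which \cite{CD3} itself obtains by the same coarea/approximation machinery the paper redeploys here; so your argument is shorter but less self-contained. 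One small correction: the standard-pairing chain rule you need is \cite[Proposition~4.5]{CD3}, not Theorem~3.3 of that paper (the latter is the representation of the jump part of the standard pairing, used in Proposition~\ref{p:pairing}). Your reduction of the locally Lipschitz case by extending $h$ off a compact neighbourhood of $u(\Omega)$ is fine.
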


\begin{proof}
Although the proof is essentially the same of \cite[Proposition~4.5]{CD3},
for the sake of completeness we prefer to illustrate it in some detail.

One of the main ingredients is 
the Chain Rule Formula for $BV$ functions (see \cite[Theorem~3.99]{AFP}):
\[
D^d [h(u)] = h'(\ut) D^d u,
\quad
D^a [h(u)] = h'(u) \nabla u\, \LLN,
\quad
D^j [h(u)] = [h(u^i) - h(u^e)]\, \nu_u \, \hh \res J_u.
\]
Statement (i) easily follows from the first two relations above
and Proposition~\ref{p:pairing}.
	
Concerning (ii),
we have that \([h(u)]^{i,e} = h(u^{i,e})\)
(see \cite[Proposition~3.69(c)]{AFP}).
Moreover, since $h$ is non-decreasing,
also the relations \([h(u)]^{\pm} = h(u^{\pm})\) hold true,
and hence (ii) follows
again from Proposition~\ref{p:pairing}. 

Let us prove (iii).
If $h$ is strictly increasing, we can follow the proof
of \cite[Proposition~2.8]{Anz}.
Specifically, \(\{u > t\} = \{h(u) > h(t)\}\) for every $t\in\R$, hence
\[
D\chiut = D\chi_{\{h(u) > h(t)\}}\,,
\qquad \forall t\in\R.
\]
From Proposition~\ref{l:theta},
for $\Leb{1}$-a.e.\ $t\in\R$ it holds
\[
\theta_\lambda(\A, Du, x)
=
\theta_\lambda(\A, D\chiut, x)
=
\theta_\lambda(\A, D\chi_{\{h(u) > h(t)\}}, x)
=
\theta_\lambda(\A, D[h(u)], x)
\]
for $|D\chiut|$-a.e.\ $x\in\Omega$,
and (iii) follows.

If $h$ is non-decreasing, we can adapt the proof of
\cite[Proposition~2.7]{LaSe}.
Specifically, let $h_\varepsilon(t) := h(t) + \varepsilon\, t$,
so that $h_\varepsilon$ is strictly increasing for every
$\varepsilon > 0$.
Since
\[
\prec{[h_\varepsilon(u)]}
= (1-\lambda)h_\varepsilon(u^-) + \lambda\, h_\varepsilon(u^+)
= \prec{[h(u)]} + \varepsilon\, \prec{u},
\]
by the previous step we deduce that
\begin{equation}\label{f:peps}
\pair{\A, D[h(u)]} + \varepsilon\, \pair{\A, Du}
=
\pair{\A, D[h_\varepsilon(u)]}
=
\theta_\lambda(\A, Du, x) \, |D[h_\varepsilon(u)]|.
\end{equation}
On the other hand, 
\[
D[h_\varepsilon(u)] =
[h'(\ut) + \varepsilon]\, D^d u
+ [h(u^i)-h(u^e) + \varepsilon (u^i - u^e)]\, D^j u
=
D[h(u)] + \varepsilon\, Du,
\]
hence, passing to the limit
in \eqref{f:peps} as $\varepsilon\to 0$,
we deduce that
\[
\pair{\A, D[h(u)]} = \theta_\lambda(\A, Du, x) \, |D[h(u)]|
\qquad
\text{as measures in $\Omega$},
\]
and (iii) follows.
\end{proof}

\begin{proposition}[Leibniz formula]
\label{p:guA}
Let \(\A\in\DMloc\) 
and \(u,v\in\BVLloc\).
Then, choosing on $J_u$ the orientation
such that  $\upiu=\uint$,
it holds
\begin{gather}
\pair{v\A, Du}^d = \prec{v}\pair{\A, Du}^d = v^* \spair{\A, 
Du}^d,\label{f:gvADud}\\
\pair{v\A, Du}^j 
=  [(1-\lambda)\Trp{\A}{J_u}\vint+\lambda\Trm{\A}{J_u}\vext]\,
(u^+ - u^-) \, \hh\res 
J_u\,.
\label{f:gvADuj}
\end{gather}
\end{proposition}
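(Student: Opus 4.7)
The plan is to deduce both formulas as immediate consequences of the decomposition in Proposition~\ref{p:pairing} together with the trace identities of Proposition~\ref{p:tracesb} and the known Leibniz rule for the \emph{standard} pairing established in \cite{CD3}. First, since $v\in BV_{\rm loc}(\Omega) \cap L^\infty_{\rm loc}(\Omega)$ and $\A\in\DMloc$, Proposition~\ref{p:tracesb} guarantees that $v\A\in\DMloc$, so that both pairings $\pair{v\A, Du}$ and $\pair{\A, Du}$ are well defined.

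For the diffuse part, I would invoke Proposition~\ref{p:pairing} twice to get $\pair{v\A, Du}^d=\spair{v\A, Du}^d$ and $\pair{\A, Du}^d=\spair{\A, Du}^d$, and then the Leibniz formula for the standard pairing (from \cite[Proposition~4.6]{CD3}) to obtain $\spair{v\A, Du}^d=v^*\spair{\A, Du}^d$. To replace $v^*$ by $\prec{v}$ in this identity, I observe that $v^*$ and $\prec{v}$ coincide off $J_v$, which is $\sigma$-finite with respect to $\hh$; by Proposition~\ref{p:decmu}, the Cantor part $D^c u$ does not charge $\sigma$-finite $\Haus{N-1}$ sets, while $D^a u = \nabla u\,\Leb{N}$ does not charge $J_v$ either. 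Since $\pair{\A, Du}^d \ll |D^d u|$, the set $J_v$ is $\pair{\A, Du}^d$-negligible, and therefore $v^*\pair{\A, Du}^d = \prec{v}\pair{\A, Du}^d$, yielding \eqref{f:gvADud}.

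For the jump part I apply Proposition~\ref{p:pairing} directly to the vector field $v\A$ in place of $\A$, obtaining
\[
\pair{v\A, Du}^j = \bigl[(1-\lambda)\Trp{v\A}{J_u} + \lambda\,\Trm{v\A}{J_u}\bigr](u^+-u^-)\,\hh\res J_u.
\]
Then Proposition~\ref{p:tracesb}, applied with $u$ replaced by $v$ and with $\Sigma=J_u$, expresses the traces of $v\A$ on $J_u$ as
\[
\Trp{v\A}{J_u} = \vint\,\Trp{\A}{J_u},\qquad \Trm{v\A}{J_u} = \vext\,\Trm{\A}{J_u} \qquad \hh\text{-a.e.\ on }J_u,
\]
where on $J_u\setminus J_v$ the identities $\vint=\vext=\tilde v$ make the two cases in Proposition~\ref{p:tracesb} collapse into a single formula. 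Substituting gives \eqref{f:gvADuj}.

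The only genuinely delicate point is the bookkeeping on $J_u$: one must reconcile the orientation chosen so that $u^+=\uint$ with the orientation used in Proposition~\ref{p:tracesb} to define the one-sided traces, and verify that the extensions $\vint=\vext=\tilde v$ on $\Omega\setminus(S_v\setminus J_v)$ allow a single formula to describe the traces of $v\A$ on all of $J_u$ (including on $J_u\setminus J_v$). Once this is checked, both assertions follow by direct substitution with no further computation.
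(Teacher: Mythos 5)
Your argument is correct, and for the jump part it takes a genuinely shorter route than the paper. The paper derives \eqref{f:gvADuj} by starting from the jump part of the \emph{standard} pairing $\spair{v\A,Du}^j=\tfrac12\bigl(\Trp{\A}{J_u}\vint+\Trm{\A}{J_u}\vext\bigr)(\upiu-\umeno)\,\hh\res J_u$ (from \cite{CD3}), then adding the correction term $\bigl(\tfrac12-\lambda\bigr)(\upiu-\umeno)\Div(v\A)\res J_u$ from \eqref{f:resto} and computing $\Div(v\A)\res J_u$ via \eqref{f:trA} and Proposition~\ref{p:tracesb}; the convex combination $(1-\lambda)\Trp{\A}{J_u}\vint+\lambda\Trm{\A}{J_u}\vext$ then emerges from an explicit algebraic rearrangement. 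You instead apply the already-established jump formula of Proposition~\ref{p:pairing} directly to the field $v\A$ (legitimate, since Proposition~\ref{p:tracesb} gives $v\A\in\DMloc$ and $u\in\BVLloc$ makes $u^*$ summable against $|\Div(v\A)|$) and then substitute the trace identities $\Trp{v\A}{J_u}=\vint\Trp{\A}{J_u}$, $\Trm{v\A}{J_u}=\vext\Trm{\A}{J_u}$; this skips the rearrangement entirely, at the price of the orientation bookkeeping you correctly flag (the traces $\vint,\vext$ in \eqref{f:gvADuj} are taken with respect to the orientation of $J_u$, not of $J_v$, and the convention $\vint=\vext=\widetilde v$ off $S_v$ merges the two cases of Proposition~\ref{p:tracesb} up to the $\hh$-null set $S_v\setminus J_v$). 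For the diffuse part your route coincides with the paper's, and you additionally justify the replacement of $v^*$ by $\prec{v}$ — namely that $J_v$ is $|D^d u|$-negligible because it is Lebesgue-null and $\sigma$-finite with respect to $\hh$, so Proposition~\ref{p:decmu} applies to $D^c u$ — a point the paper leaves implicit. The only blemish is the citation (the relevant Leibniz rule for the standard pairing is Proposition~4.9 of \cite{CD3}, not 4.6), which does not affect the argument.
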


\begin{proof}
By \cite[Proposition~4.9]{CD3},  
denoting $\alpha^i := \Trp{\A}{J_u}$ and $\alpha^e := \Trm{\A}{J_u}$ 
we have that
\begin{gather}
\spair{v\A,Du}^d = v^* \spair{\A, Du}^d\,,
\label{f:pppd}\\
\spair{v\A,Du}^j =
\frac{\alpha^i\vint+\alpha^e\vext}{2} (\upiu-\umeno)\, \hh\res J_u\,,
\label{f:pppj}
\end{gather}
hence \eqref{f:gvADud} follows from \eqref{f:pppd}
and Proposition~\ref{p:pairing}.

From the representation formulas \eqref{f:trA} and Proposition~\ref{p:tracesb}, we get
\[
\Div(v\A)\res J_u =\left[\Trp{v\A}{J_u}-\Trm{v\A}{J_u} \right] \hh\res 
J_u 
= (\vint\alpha^i-\vext\alpha^e) \hh\res 
J_u\,,
\]
hence, from \eqref{f:pppj},
we obtain
\[
\begin{split}
\pair{v\A,Du}^j = {} &
\left[ 
\frac{\alpha^i\vint+\alpha^e\vext}{2}
+\left(\frac{1}{2}-\lambda\right) 
(\vint\alpha^i-\vext\alpha^e)\right]\,  (\upiu-\umeno)\,\hh\res J_u,
\end{split}
\] 
that is \eqref{f:gvADuj} holds.
\end{proof}

\bigskip
In the last part of this section we will prove a generalized Gauss--Green formula
for vector fields \(\A\in \DMloc[\R^N]\) on a set \(E\subset\R^N\) of finite 
perimeter, generalizing the analogous result for the standard pairing 
proved in \cite[Theorem~5.1]{CD3}.

Using the conventions of Section~\ref{distrtraces},
we will assume that the generalized normal vector on \(\partial^* E\) coincides
\(\hh\)-a.e.\ on \(\partial^* E\) with the measure--theoretic 
\textsl{interior} unit normal vector to \(E\).

\begin{theorem}[Gauss-Green]
\label{t:gGG}
Let $\A \in \DMloc[\R^N]$ and $u\in\BVAloc[\R^N]$.
Let \(E \subset \R^N\) be a bounded set with finite perimeter,
and assume that the traces $\uext,\uint$
of $u$ on $\partial^* E$ belong to $L^1(\partial^* E, \Haus{N-1}\res\partial^* E)$.
Then 
the following Gauss--Green formulas hold:
\begin{gather}
\int_{E^1} \prec{u} \, d\Div\A + \int_{E^1} \pair{\A, Du} = -
\int_{\partial ^*E} \Trp{\A}{\partial^* E}\,\uint \ d\mathcal H^{N-1}\,,\label{gGreenIB}
\\
\int_{E^1\cup \partial ^*E} \prec{u} \, d\Div\A + \int_{E^1\cup \partial ^*E} 
\pair{\A, Du} = -
\int_{\partial ^*E} \Trm{\A}{\partial^* E}\,\uext \ d\mathcal H^{N-1}\,,\label{gGreenIB2}
\end{gather}
where $E^1$ is the measure theoretic interior of $E$ and 
\(\partial^* E\) is oriented
with respect to the interior unit normal vector. 
\end{theorem}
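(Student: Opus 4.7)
The plan is to reduce the generalized Gauss--Green formulas to the corresponding formulas for the standard pairing, established in [CD3, Theorem~5.1], by exploiting the fact that their left-hand sides are invariant under the choice of $\lambda$.

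First, I would observe that applying \eqref{f:servi} to both the $\lambda$--pairing and the standard pairing (corresponding to $\lambda \equiv 1/2$) and subtracting yields the measure identity
\begin{equation*}
(\prec{u} - u^*)\,\Div\A + (\pair{\A, Du} - \spair{\A, Du}) = 0\,,
\end{equation*}
which is consistent with \eqref{f:resto}, since $\prec{u} - u^* = (\lambda - 1/2)(\upiu - \umeno)$ is supported on $J_u$. Consequently, for every bounded Borel set $A \subset \R^N$,
\begin{equation*}
\int_A \prec{u}\, d\Div\A + \int_A \pair{\A, Du} = \int_A u^*\, d\Div\A + \int_A \spair{\A, Du}\,.
\end{equation*}
Integrability of $\prec{u}$ with respect to $|\Div\A|$ on the bounded sets $E^1$ and $E^1\cup\partial^* E$ follows from $u\in\BVAloc[\R^N]$ via Lemma~\ref{l:l1}, while \eqref{f:astim} ensures that $|\pair{\A, Du}| \leq \|\A\|_\infty |Du|$ is a finite measure on bounded sets.

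Next, I would apply [CD3, Theorem~5.1] to $A = E^1$ and to $A = E^1 \cup \partial^* E$, which rewrite the corresponding right-hand sides as $-\int_{\partial^* E} \Trp{u\A}{\partial^* E}\, d\Haus{N-1}$ and $-\int_{\partial^* E} \Trm{u\A}{\partial^* E}\, d\Haus{N-1}$, respectively, with $\partial^* E$ oriented by the interior unit normal to $E$. Proposition~\ref{p:tracesb} then identifies these normal traces with $\uint\, \Trp{\A}{\partial^* E}$ and $\uext\, \Trm{\A}{\partial^* E}$ $\Haus{N-1}$--a.e.\ on $\partial^* E$, using the convention $\uint = \uext = \ut$ off $S_u$ and the fact that $\Haus{N-1}(S_u \setminus J_u) = 0$. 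Combining with the invariance identity above yields \eqref{gGreenIB} and \eqref{gGreenIB2}.

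There is no substantive technical obstacle in this argument: its content is the invariance identity, followed by the trace identification via Proposition~\ref{p:tracesb}. The main care is in verifying that all integrals are well defined---the right-hand side by the integrability hypothesis on $\uint, \uext$ together with the $L^\infty$ bound \eqref{f:infestrace}, and the left-hand side by Lemma~\ref{l:l1} and \eqref{f:astim}. If needed, the case of unbounded $u \in \BVAloc[\R^N]$ can be recovered by truncating via $T_k(u)$, applying Proposition~\ref{p:trunc} and Remark~\ref{r:trunc}, and passing to the limit using the dominated convergence theorem together with the uniform bound $|\pair{\A, DT_k(u)}| \leq \|\A\|_\infty |Du|$.
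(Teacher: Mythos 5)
Your proposal is correct and follows essentially the same route as the paper: both arguments exploit the fact that $\prec{u}\,\Div\A + \pair{\A,Du} = u^*\,\Div\A + \spair{\A,Du}$ (the paper derives this from \eqref{f:resto} together with the identity $\prec{u}-u^* = -(\tfrac12-\lambda)(\upiu-\umeno)$, you derive it by subtracting the two instances of \eqref{f:servi}, which is the same cancellation), and then both conclude by invoking the Gauss--Green formula for the standard pairing from \cite[Theorem~5.1]{CD3}. The extra details you supply (the trace identification via Proposition~\ref{p:tracesb} and the integrability checks via Lemma~\ref{l:l1} and \eqref{f:astim}) are consistent with, and slightly more explicit than, what the paper records.
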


\begin{proof}
We recall that, by Lemma~\ref{l:l1}, 
$\prec{u}\in L^1_{\rm{loc}}(\R^N,|\Div \A|)$.
Recalling \eqref{f:resto}, we have that
\[
\int_{E^1} \pair{\A, Du}= \int_{E^1} \spair{\A, Du}+
\int_{E^1} \left(\frac{1}{2}-\lambda\right)
(\prec[+]{u} - \prec[-]{u})\, d\Div\A\,.
\]
On the other hand, by the definition \eqref{f:pr} of $\prec{u}$,
it holds
\[
\begin{split}
\int_{E^1} \prec{u} \, d\Div\A & = \int_{E^1} u^* d\Div\A
-\int_{E^1} \left(\frac{1}{2}-\lambda\right)(\prec[+]{u} - \prec[-]{u})\, d\Div\A 
\end{split}
\]
so that \eqref{gGreenIB} 
follows from the Gauss--Green formula for the standard pairing
proved in \cite[Theorem~5.1]{CD3}.
The validity of \eqref{gGreenIB2} can be checked in a very similar way.
\end{proof}

\section{Semicontinuity results}
\label{s:sc}

In this section we consider the pairing as a function in $BV$
\[
\BVA[\Omega] \ni u \mapsto \pair{\A,Du} \in \mathcal{M}_b(\Omega),
\]
where $\mathcal{M}_b(\Omega)$ denotes the space of finite Borel measures
on $\Omega$
(see \eqref{f:astim}).

Our aim is to characterize the selections $\lambda\colon\Omega\to [0,1]$
such that 
the above map is lower (resp.\ upper) semicontinuous,
meaning that, if $(u_n)\subset \BVA[\Omega]$ is a sequence converging
to a function $u\in \BVA[\Omega]$ (in a suitable way), then
\begin{gather*}
\pscal{\pair{\A,Du}}{\varphi}\leq\liminf_n \pscal{\pair{\A,Du_n}}{\varphi}
\qquad
\forall\varphi\in C^\infty_c(\Omega),\
\varphi\geq 0
\\
\left(
\text{resp.}\quad
\pscal{\pair{\A,Du}}{\varphi}\geq\limsup_n \pscal{\pair{\A,Du_n}}{\varphi}
\qquad
\forall\varphi\in C^\infty_c(\Omega),\
\varphi\geq 0
\right).
\end{gather*}
Since \(\pair{\A,Du}\) is affected by the pointwise value of
$u$, the correct notion of convergence in $BV$ seems to be
the strict one (see e.g.\ \cite[Definition~3.14]{AFP}).

\begin{definition} 
The sequence $(u_n)\subset BV(\Omega)$ {\sl strictly converges} to $u\in BV(\Omega)$ if $(u_n)$ converges to $u$ in $L^1(\Omega)$ and the 
total variations $|Du_n|(\Omega)$ converge to $|Du|(\Omega)$.
\end{definition}

We recall a recent result concerning the pointwise behavior of strictly
converging sequences.

\begin{proposition}\label{p:lathi}
Every sequence $(u_n)$ strictly convergent in 
$BV(\Omega)$ to $u$ admits a subsequence $(u_{n_k})$
such that for $\hh$-a.e.\ $x\in\Omega$
\begin{equation}\label{f:lahti}
\umeno(x) \leq \liminf_k u_{n_k}^-(x)\leq \limsup_k u_{n_k}^+(x)
\leq \upiu(x).
\end{equation}
In particular, $\lim_k \widetilde{u}_{n_k}(x)=\ut(x)$ for $\hh$-a.e.\ 
$x\in\Omega\setminus J_u$.
\end{proposition}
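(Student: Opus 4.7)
The plan is to reduce the statement to a pointwise convergence result for the level sets of $u_n$ and $u$ via the coarea formula and a layer--cake representation of $u^\pm$, and then invoke a Lahti--type lemma for sets of finite perimeter. The middle inequality $\liminf_k u_{n_k}^-(x)\leq\limsup_k u_{n_k}^+(x)$ is trivial since $u_n^-\leq u_n^+$, so the genuine content is the lower semicontinuity of $u\mapsto u^-$ and the upper semicontinuity of $u\mapsto u^+$ with respect to strict convergence in $BV$.

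First I would reduce to uniformly bounded, nonnegative $u_n,u$: for any $M>0$ the truncations $T_M(u_n)$ strictly converge to $T_M(u)$, as follows from the coarea identity $|DT_M(u_n)|(\Omega)=\int_{-M}^M P(\{u_n>t\},\Omega)\,dt$ combined with Fatou on $\{|t|>M\}$ and the lower semicontinuity of perimeter, giving $\limsup_n|DT_M(u_n)|(\Omega)\leq|DT_M(u)|(\Omega)$; letting $M\to\infty$ and using $T_M(u)^\pm\to u^\pm$ reduces the problem to the bounded case, after which an additive shift makes $u_n,u\geq 0$. For such $u\geq 0$ the layer--cake identities
\[
u^-(x)=\int_0^{+\infty}\chi_{\{u>t\}^1}(x)\,dt,\qquad u^+(x)=\int_0^{+\infty}\chi_{\R^N\setminus\{u>t\}^0}(x)\,dt
\]
hold pointwise in $\Omega$, directly from the definition of $u^\pm$ as the supremum/infimum of the levels at which $\{u>t\}$ has density $1$, resp.\ $0$, at $x$.

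Next I would produce strict convergence of the level sets: from $|Du_n|(\Omega)=\int_\R P(\{u_n>t\},\Omega)\,dt$, the $L^1$-convergence of $u_n$ (which yields $\chi_{\{u_n>t\}}\to\chi_{\{u>t\}}$ in $L^1$ for $\Leb{1}$-a.e.\ $t$ along a subsequence), and $|Du_n|(\Omega)\to|Du|(\Omega)$, equality in Fatou together with the lower semicontinuity of perimeter forces, along a further subsequence $(u_{n_k})$, $P(\{u_{n_k}>t\},\Omega)\to P(\{u>t\},\Omega)$ for $\Leb{1}$-a.e.\ $t$. The core step is then the characteristic--function version of the proposition: if $(E_j)$ is a sequence of sets of finite perimeter in $\Omega$ with $\chi_{E_j}\to\chi_E$ in $L^1$ and $P(E_j,\Omega)\to P(E,\Omega)$, then along a subsequence, $\hh$-a.e.\ in $\Omega$,
\[
\chi_{E^1}(x)\leq\liminf_m\chi_{E_{j_m}^1}(x),\qquad
\limsup_m\chi_{E_{j_m}^1\cup\partial^e E_{j_m}}(x)\leq\chi_{E^1\cup\partial^e E}(x).
\]
Applying this simultaneously for a countable dense set of levels $t$ via diagonal extraction, substituting into the layer cake, and concluding by Fatou (for $u^-$) and reverse Fatou (for $u^+$, justified by the uniform bound $\chi\leq 1$ on the integrand over a compact range of $t$) delivers \eqref{f:lahti}. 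The final assertion about $\widetilde u_{n_k}\to\widetilde u$ on $\Omega\setminus J_u$ follows because there $u^+(x)=u^-(x)=\widetilde u(x)$, so the sandwich $u_{n_k}^-(x)\leq\widetilde u_{n_k}(x)\leq u_{n_k}^+(x)$ forces the claim.

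The principal obstacle is the characteristic--function lemma. Strict convergence of $\chi_{E_j}$ does not imply any pointwise convergence of $\chi_{E_j}$ itself (any $L^1$ modification is allowed), yet it must force convergence of the density--$1$ kernels $E_j^1$ to $E^1$ off an $\hh$-negligible set. The standard route is via the $BV$ (equivalently $1$-)capacity: strict convergence upgrades to convergence in capacity along a subsequence, and since the measure--theoretic interior $E^1$ agrees with the quasi--interior of $\chi_E$ up to a set of capacity zero, which is in turn $\hh$-negligible, pointwise convergence $\hh$-a.e.\ follows. This capacity--theoretic step is where the strength of strict convergence is genuinely used, and explains why the conclusion fails under weak${}^*$ convergence alone.
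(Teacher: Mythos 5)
The paper's own ``proof'' of this proposition is just the citation of Lahti \cite{La} (Theorem~3.2 and Corollary~3.3), so any self-contained argument is necessarily a different route; the question is whether yours closes the gap, and it does not. Your reduction scheme --- truncation to the bounded nonnegative case, the layer--cake identities for $u^\pm$ in terms of $\{u>t\}^1$ and $\R^N\setminus\{u>t\}^0$, the converse--Fatou argument giving $P(\{u_{n_k}>t\},\Omega)\to P(\{u>t\},\Omega)$ for a.e.\ $t$, and Fatou/reverse Fatou in $t$ --- is sound, modulo routine care with the $t$- and $M$-dependent exceptional sets and the monotonicity in $t$ needed to pass from a countable dense set of levels to almost every level. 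But this only reduces the proposition to its special case $u=\chi_E$: the ``characteristic--function lemma'' you isolate is exactly the statement of the proposition for $u_n=\chi_{E_n}$, so all of the content is concentrated there, and that is precisely where your argument breaks.

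The proposed mechanism --- strict convergence upgrades to convergence in $1$-capacity along a subsequence, hence $\chi_{E_j^1}\to\chi_{E^1}$ pointwise $\hh$-a.e.\ --- is false. Strict convergence controls $\|u_n-u\|_{L^1}$ and $|Du_n|(\Omega)-|Du|(\Omega)$ but \emph{not} $|D(u_n-u)|(\Omega)$, which is what a capacity bound of the form $\mathrm{cap}_1(\{|u_n-u|^+>\eps\})\lesssim \eps^{-1}\|u_n-u\|_{BV}$ would require. Concretely, in $\Omega=(-2,2)\subset\R$ take $E=[0,1]$ and $E_j=[-1/j,1]$: then $\chi_{E_j}\to\chi_E$ strictly, yet $E_j^1\,\triangle\, E^1\supset(-1/j,0)$ contains an interval for every $j$, and in dimension one points already have positive $1$-capacity (positive $\Haus{0}$-measure), so there is no convergence in capacity and $\chi_{E_j^1}(0)=1\not\to 0=\chi_{E^1}(0)$; the same failure occurs for mollifications of a jump function in any dimension. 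This is not an accident: the conclusion of the proposition is only a one--sided sandwich precisely because two--sided (quasi-everywhere pointwise) convergence fails on the jump set, so any correct proof of the set lemma must be genuinely one--sided. That one--sided argument is the substance of Lahti's theorem, and it is missing from your sketch; as it stands, your ``standard route via capacity'' would prove a false statement.
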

\begin{proof}
See \cite{La}, Theorem 3.2, and Corollary 3.3.
\end{proof}

Combining Proposition~\ref{p:lathi} with Theorem 3.3 in \cite{CDLP},
we obtain the following approximation result. 

\begin{proposition}
\label{p:approx}
Let $\Omega\subset\R^N$ be a bounded open set,
and let $u\in BV(\Omega)$.
Then there exist two sequences $(u_n), (v_n) \subset W^{1,1}(\Omega)$
such that:
\begin{itemize}
\item [(a)]
for every $n\in\N$,
$\preciso{v}_n \leq\prec[-]{u}$ and $\prec[+]{u} \leq \preciso{u}_n$
$\Haus{N-1}$-a.e.\ in $\Omega$;
\item [(b)]
$u_n\to u$, $v_n\to u$ strictly in $BV$;
\item[(c)]
$\preciso{u}_n(x) \to u^+(x)$ and $\preciso{v}_n(x) \to u^-(x)$
for $\hh$-a.e.\ $x\in\Omega$.
\end{itemize}
If, in addition, $u\in L^\infty(\Omega)$, then the above sequences
are bounded in $L^\infty(\Omega)$.
\end{proposition}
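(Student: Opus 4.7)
The plan is to combine the one-sided approximation theorem of~\cite{CDLP} with the pointwise control on strictly convergent $BV$ sequences given by Proposition~\ref{p:lathi}.

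First, I would invoke Theorem~3.3 of~\cite{CDLP}, which, given $u\in BV(\Omega)$, directly provides two sequences $(u_n),(v_n)\subset W^{1,1}(\Omega)$ strictly converging to $u$ in $BV(\Omega)$ and satisfying the one-sided pointwise inequalities $\preciso{u}_n\geq \prec[+]{u}$ and $\preciso{v}_n\leq \prec[-]{u}$ $\hh$-a.e.\ in $\Omega$. This immediately yields (a) and (b).

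To prove (c), I would then apply Proposition~\ref{p:lathi} to each of the two sequences separately. Since every $u_n\in W^{1,1}(\Omega)$ has an absolutely continuous distributional derivative, its jump set is $\hh$-negligible, so $u_n^+=u_n^-=\preciso{u}_n$ holds $\hh$-a.e.\ in $\Omega$. Extracting a subsequence (not relabeled), Proposition~\ref{p:lathi} gives
\[
\limsup_n \preciso{u}_n(x)=\limsup_n u_n^+(x)\leq \prec[+]{u}(x)
\qquad \text{for $\hh$-a.e.\ }x\in\Omega,
\]
which together with the opposite inequality in (a) forces $\preciso{u}_n(x)\to \prec[+]{u}(x)$ $\hh$-a.e. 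The same scheme applied to $(v_n)$, using $\liminf_n v_n^-(x)\geq \prec[-]{u}(x)$ from Proposition~\ref{p:lathi}, yields $\preciso{v}_n(x)\to \prec[-]{u}(x)$ $\hh$-a.e. If in addition $u\in L^\infty(\Omega)$ with $M:=\|u\|_\infty$, one replaces $u_n,v_n$ by their truncations $T_M(u_n), T_M(v_n)$: by Proposition~\ref{p:trunc} these stay in $W^{1,1}(\Omega)\cap L^\infty(\Omega)$ with $|DT_M(u_n)|\leq |Du_n|$, still satisfy the one-sided bounds (since $|\prec[\pm]{u}|\leq M$ $\hh$-a.e.), strictly converge to $u=T_M(u)$ in $BV$, and satisfy the pointwise convergence in (c) by the continuity of $T_M$.

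The main obstacle is conceptual rather than technical: the CDLP sequence supplies a \emph{wrong-direction} inequality $\preciso{u}_n\geq \prec[+]{u}$ that can be saturated in the limit only through a matching $\limsup$ bound, and this is exactly the content of Lahti's semicontinuity lemma. Pairing these two ingredients, and the symmetric pair for $(v_n)$, is the decisive step; the remaining verifications (subsequence extraction, truncation, preservation of strict convergence) are routine.
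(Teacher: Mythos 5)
Your proof is correct and follows essentially the same route as the paper: apply the one-sided approximation of Theorem~3.3 in \cite{CDLP} to get (a) and (b), then squeeze $\preciso{u}_n$ between the one-sided bound and Lahti's $\limsup$ estimate from Proposition~\ref{p:lathi} (after a subsequence extraction) to get (c). The only divergence is in the bounded case, where the paper simply notes that the \cite{CDLP} construction already yields an $L^\infty$-bounded sequence when $u\in L^\infty(\Omega)$, so your extra truncation step, while valid, is not needed.
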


\begin{proof}
From Theorem 3.3 in \cite{CDLP}, there exists a sequence
$(u_n)\subset W^{1,1}(\Omega)$,
strictly convergent to $u$,
and such that $\preciso{u}_n \geq u^+$ 
$\Haus{N-1}$-a.e.\ in $\Omega$, for every $n\in\N$.
Moreover, if $u$ is bounded, then this sequence
is bounded in $L^\infty(\Omega)$.
By Proposition~\ref{p:lathi}, we can extract a subsequence
(not relabeled) such that
\[
\limsup_n \prec[+]{u_n}(x) \leq\prec[+]{u}(x),
\qquad \text{for $\Haus{N-1}$-a.e.}\ x\in\Omega.
\]
On the other hand, the inequality $\preciso{u}_n \geq u^+$
gives
\[
\liminf_n \prec[+]{u_n}(x) \geq\prec[+]{u}(x),
\qquad \text{for $\Haus{N-1}$-a.e.}\ x\in\Omega,
\]
hence the assertion for $(u_n)$ follows.
The construction of $(v_n)$ can be done in a similar way.
\end{proof}

\medskip

In order to state the semicontinuity results,
a more piece of notation is needed.
Given a vector field $\A\in\DM(\Omega)$,
let us denote by $\Omega_{\A}$ the set of points $x\in\Omega$
such that
$x$ belongs to the support of $\Div\A$
(i.e.\ $|\Div\A|(B_r(x)\cap\Omega) > 0$ for every $r>0$),
and the limit
\[
\psi_{\A}(x) := \lim_{r\to 0} \frac{\Div\A (B_r(x))}{|\Div\A|(B_r(x))}
\]
exists in $\R$, with $|\psi_{\A}(x)|=1$.
If we extend $\psi_{\A} = 0$ in $\Omega\setminus\Omega_{\A}$,
we have that $\psi_{\A}\in L^1(\Omega,|\Div \A|)$ and the
polar decomposition $\Div\A = \psi_{\A} |\Div\A|$ holds.
Moreover, if we define the sets
\begin{equation}\label{f:Omegapm}
\Omega^+_{\A} :=
\{x\in\Omega_{\A}:\ \psi_{\A}(x) = 1\},
\quad
\Omega^-_{\A} :=
\{x\in\Omega_{\A}:\ \psi_{\A}(x) = -1\},
\end{equation}
then $(\Div\A)^+ = \Div\A\res\Omega^+_{\A}$
and $(\Div\A)^- = -\Div\A\res\Omega^-_{\A}$.

Let $\Theta_{\A}$ be the jump set of the measure $|\Div\A|$
(see Proposition~\ref{p:basicVF}).
Since $\Theta_{\A}$ is $\sigma$-finite with respect to $\hh$,
then there exists a countably $\hh$-rectifiable Borel set 
$\Theta^r_{\A} \subseteq \Theta_{\A}$
such that $\Theta^u_{\A} := \Theta_{\A} \setminus \Theta^r_{\A}$
is purely $\hh$-unrectifiable 
(i.e.\ $\hh(\Theta^u_{\A} \cap \Sigma) = 0$
for every countably $\hh$-rectifiable set $\Sigma$,
see \cite[Definition~2.64 and Proposition~2.76]{AFP}).

Let us define the families of selections
\begin{gather*}
\Lambda_{{\rm lsc}} :=
\left\{
\lambda\colon \Omega\to [0,1]\ \text{Borel:}\
\lambda = 0\
\text{$\hh$-a.e.\ in}\ \Theta^r_{\A}\cap\Omega^-_{\A},
\lambda = 1\
\text{$\hh$-a.e.\ in}\ \Theta^r_{\A}\cap\Omega^+_{\A}
\right\}
\\
\Lambda_{{\rm usc}} :=
\left\{
\lambda\colon \Omega\to [0,1]\ \text{Borel:}\
\lambda = 1\
\text{$\hh$-a.e.\ in}\ \Theta^r_{\A}\cap\Omega^-_{\A},
\lambda = 0\
\text{$\hh$-a.e.\ in}\ \Theta^r_{\A}\cap\Omega^+_{\A}
\right\}.
\end{gather*}

These families satisfy the following extremality properties.

\begin{lemma}
\label{l:lsc}
Given \(\A\in\DM(\Omega)\), 
$u\in\BVAloc$,
$\varphi\in C_0(\Omega)$, $\varphi\geq 0$,
then for every Borel function $\lambda\colon\Omega\to [0,1]$
it holds
\begin{equation}\label{f:dispm}
\int_{\Omega^{+}_{\A}} \prec{u} \varphi \, d\, \Div \A
\leq
\int_{\Omega^{+}_{\A}} \prec[+]{u} \varphi \, d\, \Div \A\,,
\qquad
\int_{\Omega^{-}_{\A}} \prec{u} \varphi \, d\, \Div \A
\leq
\int_{\Omega^{-}_{\A}} \prec[-]{u} \varphi \, d\, \Div \A\,,
\end{equation}
with equality if $\lambda\in\Lambda_{{\rm lsc}}$.

Similarly,
\begin{equation}\label{f:disusc}
\int_{\Omega^{+}_{\A}} \prec{u} \varphi \, d\, \Div \A
\geq
\int_{\Omega^{+}_{\A}} \prec[-]{u} \varphi \, d\, \Div \A\,,
\qquad
\int_{\Omega^{-}_{\A}} \prec{u} \varphi \, d\, \Div \A
\geq
\int_{\Omega^{-}_{\A}} \prec[+]{u} \varphi \, d\, \Div \A\,,
\end{equation}
with equality if $\lambda\in\Lambda_{{\rm usc}}$.
\end{lemma}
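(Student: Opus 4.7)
The plan is to reduce both the inequalities and the equality cases to a careful analysis of the measure $|\Div\A|$ restricted to the set $\{u^+\neq u^-\}$. Once the inequalities $\prec[-]{u}\leq\prec{u}\leq\prec[+]{u}$ and the sign of $\Div\A$ on $\Omega^\pm_{\A}$ are understood, the inequalities \eqref{f:dispm}--\eqref{f:disusc} are immediate; the real content of the lemma is identifying the minimal set on which the selection $\lambda$ must be prescribed in order to force equality, which is precisely $\Theta^r_{\A}$.

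First I would prove the following auxiliary fact: for every $u\in BV_{\rm loc}(\Omega)$,
\begin{equation}\label{f:aux}
|\Div\A|\bigl(\{u^+\neq u^-\}\setminus \Theta^r_{\A}\bigr)=0.
\end{equation}
Indeed, $\{u^+\neq u^-\}\subset S_u$, which is $\sigma$-finite with respect to $\hh$ and $\Leb{N}$-negligible; hence $|\Div^a\A|$ and $|\Div^c\A|$ both vanish on it by Proposition~\ref{p:basicVF}. The jump part $\Div^j\A$ is concentrated on $\Theta_{\A}=\Theta^r_{\A}\cup \Theta^u_{\A}$; since $S_u$ is countably $\hh$-rectifiable and $\Theta^u_{\A}$ is purely $\hh$-unrectifiable, $\hh(S_u\cap\Theta^u_{\A})=0$, and because $\Div^j\A\ll\hh\res\Theta_{\A}$ the contribution of $\Theta^u_{\A}$ drops out as well. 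This gives \eqref{f:aux}.

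Next I would split each integral as
\[
\int_{\Omega^{\pm}_{\A}} \prec{u}\,\varphi\,d\Div\A
=\int_{\Omega^{\pm}_{\A}\setminus\{u^+\neq u^-\}}\!\!\!\prec{u}\,\varphi\,d\Div\A
+\int_{\Omega^{\pm}_{\A}\cap\{u^+\neq u^-\}}\!\!\!\prec{u}\,\varphi\,d\Div\A,
\]
and analogously for $\prec[\pm]{u}$. On the first piece $\prec{u}=\prec[+]{u}=\prec[-]{u}=\widetilde u$ $|\Div\A|$-a.e., so the two sides coincide trivially. For the second piece, on $\Omega^+_{\A}$ we have $\Div\A=|\Div\A|$, so the pointwise estimates $\prec[-]{u}\leq\prec{u}\leq\prec[+]{u}$, together with $\varphi\geq 0$, yield at once both inequalities in \eqref{f:dispm} and \eqref{f:disusc} relative to $\Omega^+_{\A}$; on $\Omega^-_{\A}$ we have $\Div\A=-|\Div\A|$, and the same pointwise estimates now produce the reverse integral inequalities, matching the statement.

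Finally, to obtain equality when $\lambda\in\Lambda_{\rm lsc}$ I invoke \eqref{f:aux}: the whole discrepancy $\{u^+\neq u^-\}$ is contained, up to a $|\Div\A|$-null set, in $\Theta^r_{\A}$. On $\Theta^r_{\A}\cap\Omega^+_{\A}$ the hypothesis gives $\lambda=1$ $\hh$-a.e., hence $|\Div\A|$-a.e. (since $|\Div\A|\ll\hh$), so $\prec{u}=\prec[+]{u}$ there; on $\Theta^r_{\A}\cap\Omega^-_{\A}$ analogously $\lambda=0$ forces $\prec{u}=\prec[-]{u}$. This gives equality in both relations of \eqref{f:dispm}. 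The argument for $\Lambda_{\rm usc}$ in \eqref{f:disusc} is symmetric, with the roles of $\prec[+]{u}$ and $\prec[-]{u}$ interchanged on $\Omega^+_{\A}$ and $\Omega^-_{\A}$. The only nontrivial step is \eqref{f:aux}; once it is in place, the remainder is a short bookkeeping of signs.
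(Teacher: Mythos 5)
Your proof is correct and follows essentially the same route as the paper's: the inequalities come from $\prec[-]{u}\leq\prec{u}\leq\prec[+]{u}$ together with the sign of $\Div\A$ on $\Omega^{\pm}_{\A}$, and the equality case rests on the decomposition of $\Div\A$ from Proposition~\ref{p:basicVF}, the rectifiability of $S_u$ versus the pure unrectifiability of $\Theta^u_{\A}$, and the prescription of $\lambda$ on $\Theta^r_{\A}\cap\Omega^{\pm}_{\A}$. Isolating the auxiliary fact $|\Div\A|\bigl(\{u^+\neq u^-\}\setminus\Theta^r_{\A}\bigr)=0$ is just a cleaner packaging of the three-set decomposition of $\Omega^{+}_{\A}$ used in the paper's display \eqref{f:disp}.
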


\begin{proof}
Let us prove the claim only for 
the first inequality in \eqref{f:dispm}, the other being similar.

Since, by the very definition of  $\Omega^{+}_{\A}$,
\[
\int_{\Omega^{+}_{\A}} \prec{u} \varphi \, d\, \Div \A
= \int_{\Omega^{+}_{\A}} \prec{u} \varphi \, d\, |\Div \A|
\]
and $\prec{u}\leq\prec[+]{u}$ $\hh$-a.e.\ in $\Omega$,
the first inequality in \eqref{f:dispm} follows.

Let $\lambda\in\Lambda_{{\rm lsc}}$ and let us prove 
that equality holds in the first inequality in \eqref{f:dispm}.
Let us decompose the set $\Omega_{\A}^+$, defined in \eqref{f:Omegapm},
as the union of the disjoint sets
\[
\Omega_{\A}^+ \setminus J_u,
\quad
\Omega_{\A}^+ \cap ( J_u \cap \Theta_{\A}),
\quad
\Omega_{\A}^+ \cap ( J_u \setminus \Theta_{\A}),
\]
that, in turn, coincide up to sets of $\hh$-measure zero
respectively with
\[
\Omega_{\A}^+ \setminus S_u,
\quad
\Omega_{\A}^+  \cap \Theta^r_{\A} \cap J_u,
\quad
(\Omega_{\A}^+\setminus \Theta_{\A})\cap J_u.
\]
Observe that $\prec{u} = \preciso{u}$ 
$\hh$-a.e.\ (hence $|\Div\A|$-a.e.) in $\Omega_{\A}^+ \setminus S_u$,
$\prec{u} = \prec[+]{u}$ 
$\hh$-a.e.\ in $\Omega_{\A}^+  \cap \Theta^r_{\A}$,
and, by Proposition~\ref{p:basicVF},
$|\Div\A|((\Omega_{\A}^+\setminus \Theta_{\A})\cap J_u) = 0$
Hence, 
\begin{equation}\label{f:disp}
\begin{split}
\int_{\Omega^+_{\A}} \prec{u} \varphi \, d\, \Div \A
= {} &
\int_{\Omega^+_{\A}} \prec{u} \varphi \, d\, |\Div \A|
\\ = {} &
\int_{\Omega^+_{\A}\setminus S_u} \preciso{u}\, \varphi \, d\, |\Div \A|
+
\int_{\Omega_{\A}^+ \cap \Theta^r_{\A}\cap J_u } \prec[+]{u}\, \varphi \, d\, |\Div \A|
\\ = {} &
\int_{\Omega^+_{\A}} \prec[+]{u} \varphi \, d\, \Div \A\,.
\qedhere
\end{split}
\end{equation}
\end{proof}

\begin{corollary}
Given \(\A\in\DM(\Omega)\) and $u\in\BVAloc$, it holds:
\begin{gather}
\pair{\A,Du}
= 
-\prec[+]{u} \, (\Div\A)^+
+\prec[-]{u} \, (\Div\A)^-
+ \Div(u\,\A),
\qquad \forall\lambda\in\Lambda_{{\rm lsc}},
\label{f:plsc}
\\
\pair{\A,Du}
= 
-\prec[-]{u} \, (\Div\A)^+
+\prec[+]{u} \, (\Div\A)^-
+ \Div(u\,\A),
\qquad \forall\lambda\in\Lambda_{{\rm usc}}.
\label{f:pusc}
\end{gather}
In particular, 
\begin{gather}
\pair{\A,Du} = \min\{\pair[0]{\A,Du}, \pair[1]{\A,Du}\},
\qquad
\forall\lambda\in\Lambda_{{\rm lsc}},
\label{f:trlsc1}
\\
\pair{\A,Du} = \max\{\pair[0]{\A,Du}, \pair[1]{\A,Du}\},
\qquad
\forall\lambda\in\Lambda_{{\rm usc}}.
\label{f:trusc1}
\end{gather}
Moreover, if the orientation of $J_u$ is chosen in such a way that $\upiu=\uint$,
then, 
\begin{gather}
\pair{\A,Du}^j = \min\{\Trp{\A}{J_u} ,\, \Trm{\A}{J_u}\}
\, (u^+-u^-)\hh\res J_u,
\qquad
\forall\lambda\in\Lambda_{{\rm lsc}}\,,
\label{f:trlsc}
\\
\pair{\A,Du}^j = \max\{\Trp{\A}{J_u} ,\, \Trm{\A}{J_u}\}
\, (u^+-u^-)\hh\res J_u,
\qquad
\forall\lambda\in\Lambda_{{\rm usc}}\,.
\label{f:trusc}
\end{gather}
\end{corollary}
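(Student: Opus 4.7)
The plan is to derive all six identities by combining the base formula $\pair{\A,Du} = -\prec{u}\,\Div\A + \Div(u\A)$ of \eqref{f:servi} with the decomposition $\Div\A = (\Div\A)^+ - (\Div\A)^-$, where $(\Div\A)^+ = \Div\A\res\Omega^+_{\A}$ and $(\Div\A)^- = -\Div\A\res\Omega^-_{\A}$, together with Lemma~\ref{l:lsc}. The lower and upper semicontinuous cases being symmetric, I focus on $\lambda\in\Lambda_{{\rm lsc}}$.

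To establish \eqref{f:plsc}, I observe that for $\lambda\in\Lambda_{{\rm lsc}}$ the equality cases of \eqref{f:dispm} hold for every nonnegative $\varphi\in C_0(\Omega)$, and hence, by splitting $\varphi = \varphi^+ - \varphi^-$, for every $\varphi\in C_0(\Omega)$. This identifies the signed measures
\[
\prec{u}\,\Div\A\res\Omega^+_{\A}=\prec[+]{u}\,\Div\A\res\Omega^+_{\A},
\qquad
\prec{u}\,\Div\A\res\Omega^-_{\A}=\prec[-]{u}\,\Div\A\res\Omega^-_{\A},
\]
which combine to $\prec{u}\,\Div\A = \prec[+]{u}(\Div\A)^+ - \prec[-]{u}(\Div\A)^-$; substituting into \eqref{f:servi} gives \eqref{f:plsc}. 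Formula \eqref{f:pusc} follows identically from the equality cases in \eqref{f:disusc}.

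Next, specializing \eqref{f:servi} to the constant selections $\lambda\equiv 0$ and $\lambda\equiv 1$ gives
\[
\pair[0]{\A,Du}=-\umeno\,\Div\A+\Div(u\A),
\qquad
\pair[1]{\A,Du}=-\upiu\,\Div\A+\Div(u\A),
\]
so that, for any $\lambda\in\Lambda_{{\rm lsc}}$, subtracting \eqref{f:plsc} yields
\[
\pair{\A,Du}-\pair[0]{\A,Du}=-(\upiu-\umeno)(\Div\A)^+,
\qquad
\pair{\A,Du}-\pair[1]{\A,Du}=-(\upiu-\umeno)(\Div\A)^-.
\]
Both right-hand sides are nonpositive measures, concentrated on the disjoint sets $\Omega^+_{\A}$ and $\Omega^-_{\A}$ respectively. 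Hence $\pair{\A,Du}=\pair[1]{\A,Du}$ on $\Omega^+_{\A}$, $\pair{\A,Du}=\pair[0]{\A,Du}$ on $\Omega^-_{\A}$, and $\pair[0]{\A,Du}=\pair[1]{\A,Du}=\pair{\A,Du}$ off $\Omega^+_{\A}\cup\Omega^-_{\A}$ (where $|\Div\A|$ vanishes). A direct check shows that the partition $E_1 := E\cap\Omega^-_{\A}$, $E_2 := E\setminus\Omega^-_{\A}$ attains the infimum defining $\min\{\pair[0]{\A,Du},\pair[1]{\A,Du}\}(E)$, giving \eqref{f:trlsc1}; formula \eqref{f:trusc1} follows in the same way from \eqref{f:pusc}.

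Finally, \eqref{f:trlsc} and \eqref{f:trusc} are immediate corollaries of \eqref{f:trlsc1}--\eqref{f:trusc1}: by Proposition~\ref{p:pairing}, the jump parts of $\pair[0]{\A,Du}$ and $\pair[1]{\A,Du}$ have densities $\Trp{\A}{J_u}(u^+-u^-)$ and $\Trm{\A}{J_u}(u^+-u^-)$ respectively with respect to $\hh\res J_u$, and since $u^+-u^-\geq 0$ on $J_u$ the lattice operation reduces to the pointwise min or max of the normal traces. The only delicate step throughout is the interpretation of $\min$ and $\max$ of signed Radon measures in \eqref{f:trlsc1}--\eqref{f:trusc1}; this is handled by the fact that, by Proposition~\ref{p:pairing}, $\pair[0]{\A,Du}$ and $\pair[1]{\A,Du}$ share the same diffuse part and differ only by $(\upiu-\umeno)\Div\A$, whose Hahn decomposition coincides with $\Omega^+_{\A}\cup\Omega^-_{\A}$, making the extremal partition explicit.
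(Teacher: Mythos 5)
Your proposal is correct and follows essentially the same route as the paper: the identities \eqref{f:plsc}--\eqref{f:pusc} come from the equality cases in Lemma~\ref{l:lsc} substituted into \eqref{f:servi}, the min/max formulas are verified directly from the definition of the lattice operations on measures by exhibiting the extremal partition $E\cap\Omega^-_{\A}$, $E\setminus\Omega^-_{\A}$ (the paper bounds the infimum from below over all partitions, which is the same computation), and \eqref{f:trlsc}--\eqref{f:trusc} then follow from Proposition~\ref{p:pairing}. The only points you gloss over—that the jump part of the minimum is the minimum of the jump parts, and that $|\Div\A|$ vanishes off $\Omega^+_{\A}\cup\Omega^-_{\A}$—are harmless and justified by the coincidence of the diffuse parts and the Besicovitch differentiation theorem.
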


\begin{proof}
The first part is a direct consequence of the equality case
in Lemma~\ref{l:lsc}.

Let us prove \eqref{f:trlsc1}.
To simplify the notation,
let 
\[
\mu := \Div\A, \qquad
\nu := \min\{\pair[0]{\A,Du}, \pair[1]{\A,Du}\}.
\]
Since
$\pair[0]{\A,Du} = -u^- \mu + \Div(u\A)$
and $\pair[1]{\A,Du} = -u^+ \mu + \Div(u\A)$,
by definition of minimum of two measures, for every Borel set 
$E\subset\Omega$ one has
\[
\nu(E) = \Div(u\A)(E) +
\inf\left\{
-u^- \mu^+(E_0) - u^+ \mu^+(E_1) 
+ u^- \mu^-(E_0) + u^+ \mu^-(E_1)
\right\},
\] 
where the infimum is taken over the
pairs $E_0, E_1$ of disjoint Borel sets
such that $E = E_0\cup E_1$.
Setting $E^- :=  E\cap\Omega_{\A}^-$
and $E^+ := E \setminus E^-$,
then $E\cap\Omega_{\A}^+ \subset E^+$ and
\begin{gather*}
-u^- \mu^+(E_0) - u^+ \mu^+(E_1) \geq
-u^+\mu^+(E^+) = -u^+\mu^+(E), 
\\
u^- \mu^-(E_0) + u^+ \mu^-(E_1) \geq
u^-\mu^-(E^-) = u^-\mu^-(E), 
\end{gather*}
for every partition $\{E_0, E_1\}$ of $E$.
Hence,
\[
\nu(E) = \Div(u\A)(E) -u^+\mu^+(E) + u^-\mu^-(E)
= \pair{\A, Du}(E),
\qquad
\forall\lambda\in\Lambda_{{\rm lsc}}\,.
\]
The proof of \eqref{f:trusc1} is similar.
Finally,
\eqref{f:trlsc} and \eqref{f:trusc}
are consequences of \eqref{f:trlsc1} and \eqref{f:trusc1},
respectively, and Proposition~\ref{p:pairing}.
\end{proof}

\bigskip

\begin{theorem}
\label{t:lsc}
Let \(\A\in\DM(\Omega)\),
and let $\lambda\colon \Omega \to [0,1]$ be a Borel function.

Then $\lambda\in \Lambda_{{\rm lsc}}$ if and only if,
for every $u_n, u\in BV(\Omega)$ satisfying
\begin{itemize}
\item[(a)]
$u_n \to u$ strictly in $BV$,
\item[(b)] there exists $g\in L^1(\Omega, |\Div\A|)$
such that, for every $n\in\N$,  $|u_n^\pm| \leq g$ $|\Div\A|$-a.e.\
in $\Omega$, 
\end{itemize}
it holds
\begin{equation}\label{f:lsc}
\pscal{\pair{\A,Du}}{\varphi}
\leq
\liminf_n \pscal{\pair{\A,Du_n}}{\varphi}
\qquad
\forall \varphi\in C^\infty_c(\Omega),
\
\varphi\geq 0.
\end{equation}
Analogously, 
$\lambda\in \Lambda_{{\rm usc}}$ if and only if,
for every $u_n, u\in BV(\Omega)$ satisfying (a), (b)
it holds
\begin{equation}\label{f:usc}
\pscal{\pair{\A,Du}}{\varphi}
\geq
\limsup_n \pscal{\pair{\A,Du_n}}{\varphi}
\qquad
\forall \varphi\in C^\infty_c(\Omega),
\
\varphi\geq 0.
\end{equation} 
\end{theorem}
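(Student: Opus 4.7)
The proof splits into the two equivalences; I describe the characterization of $\Lambda_{\rm lsc}$ in detail, as the statement about $\Lambda_{\rm usc}$ is strictly symmetric (interchanging the roles of $\upiu$, $\umeno$ and of $(\Div\A)^{\pm}$).

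\emph{Sufficiency.} My plan is to exploit the explicit representation~\eqref{f:plsc},
\begin{equation*}
\pscal{\pair{\A,Du}}{\varphi}=-\int_\Omega\upiu\varphi\,d(\Div\A)^+ +\int_\Omega\umeno\varphi\,d(\Div\A)^- -\int_\Omega u\A\cdot\nabla\varphi\,dx,
\end{equation*}
valid whenever $\lambda\in\Lambda_{\rm lsc}$, and to write the analogous identity for each $u_n$. The $L^1$-convergence of $u_n$ to $u$ takes care of the Lebesgue integral. For the other two terms, I will first extract a subsequence along which Proposition~\ref{p:lathi} yields $\limsup_n u_n^+\leq\upiu$ and $\liminf_n u_n^-\geq\umeno$ $\Haus{N-1}$-a.e., hence also $|\Div\A|$-a.e.; the domination hypothesis~(b) will allow me to apply Fatou's lemma to the nonnegative integrands $g-u_n^+$ and $u_n^-+g$, producing the two one-sided inequalities that assemble into~\eqref{f:lsc} along the subsequence. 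A standard subsequence-of-subsequences argument then promotes it to the full sequence.

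\emph{Necessity.} Conversely, assuming~\eqref{f:lsc} for every admissible sequence, I argue by contradiction. By symmetry, I may suppose there is a Borel set $E\subset\Theta_{\A}^r\cap\Omega^+_{\A}$ with $\Haus{N-1}(E)>0$ and $\lambda<1$ on $E$ (the case $E\subset\Theta_{\A}^r\cap\Omega^-_{\A}$ with $\lambda>0$ is handled identically, using the lower approximation in Proposition~\ref{p:approx}). Using the countable $\Haus{N-1}$-rectifiability of $\Theta_{\A}^r$, I carve out a bounded set of finite perimeter $B\Subset\Omega$ with $\Haus{N-1}(\partial^*B\cap E)>0$, oriented so that $u:=\chi_B$ has $\upiu=1$, $\umeno=0$ on $J_u=\partial^*B$. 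Proposition~\ref{p:approx} then supplies an $L^\infty$-bounded sequence $(u_n)\subset W^{1,1}(\Omega)$ converging strictly in $BV$ to $u$ with $\widetilde u_n\to\upiu$ $\Haus{N-1}$-a.e.; since $J_{u_n}=\emptyset$, identity~\eqref{f:servi} reduces to $\pair{\A,Du_n}=\Div(u_n\A)-\widetilde u_n\,\Div\A$, and the dominated convergence theorem gives
\begin{equation*}
\lim_n\pscal{\pair{\A,Du_n}}{\varphi}=\pscal{\Div(u\A)}{\varphi}-\int_\Omega\upiu\,\varphi\,d\Div\A
\end{equation*}
for every $\varphi\in C^\infty_c(\Omega)$, $\varphi\geq 0$. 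Feeding this into~\eqref{f:lsc} and using~\eqref{f:servi} for $u$, I obtain the signed-measure inequality $(\prec{u}-\upiu)\Div\A\geq 0$. Restricting to $J_u\cap E$, where by Theorem~\ref{t:density} and Proposition~\ref{p:basicVF}(iii) the measure $\Div\A$ is a positive multiple of $\Haus{N-1}\res(J_u\cap E)$ (the absolutely continuous and Cantor parts vanishing by the rectifiability and finite $\Haus{N-1}$-measure of $\partial^*B$), this forces $\lambda=1$ $\Haus{N-1}$-almost everywhere on $J_u\cap E$, contradicting $\lambda<1$ on $E$.

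\emph{Main obstacle and usc case.} The subtle step is the construction of the pure-jump test function $u=\chi_B$ whose jump set isolates a prescribed portion of $\Theta_{\A}^r$: this is precisely where countable $\Haus{N-1}$-rectifiability of $\Theta_{\A}^r$ is indispensable, and it also explains why the defining conditions of $\Lambda_{\rm lsc}$ and $\Lambda_{\rm usc}$ are imposed only on the rectifiable part of the jump set of $|\Div\A|$. The upper-semicontinuous case is then obtained by the same scheme, with~\eqref{f:pusc} replacing~\eqref{f:plsc}, reverse Fatou (legitimate thanks to the bound $u_n^+\leq g$) replacing the standard Fatou in the sufficiency, and the approximation $v_n\to u$ with $\widetilde v_n\to\umeno$ (also provided by Proposition~\ref{p:approx}) replacing $u_n$ in the necessity.
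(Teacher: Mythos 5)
Your proof is correct and follows essentially the same route as the paper's: sufficiency via Lahti's pointwise estimates (Proposition~\ref{p:lathi}) combined with Fatou's lemma under the domination hypothesis (b), and necessity via the one-sided $W^{1,1}$ approximation of Proposition~\ref{p:approx} tested on characteristic functions whose reduced boundaries meet $\Theta^r_{\A}\cap\Omega^{\pm}_{\A}$ in positive $\Haus{N-1}$-measure. The only organizational difference is that the paper first proves the measure inequalities $\pair{\A,Du}\leq\pair[0]{\A,Du}$ and $\pair{\A,Du}\leq\pair[1]{\A,Du}$ for every $u$ and then specializes to $C^1$ domains, extending to general rectifiable sets through the normal-trace machinery of Section~\ref{distrtraces}, whereas you localize at once to a single $\chi_B$ by contradiction --- both arguments ultimately rest on the same covering of the rectifiable jump set by $C^1$ hypersurfaces bounding sets of finite perimeter.
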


\begin{proof}
Let us prove only the statement concerning the lower semicontinuity,
the other being similar.

Let $\lambda\in \Lambda_{{\rm lsc}}$,
let $u_n, u\in BV(\Omega)$ satisfy (a), (b),
and let us prove that the semicontinuity property in \eqref{f:lsc} holds.
Let $\varphi\in C^\infty_c(\Omega)$, $\varphi\geq 0$,
and let  $(u_{n_k})$ be a subsequence such that 
\[
\liminf_n \pscal{\pair{\A,Du_n}}{\varphi}=\lim_k 
\pscal{\pair{\A,Du_{n_k}}}{\varphi},
\]
and \eqref{f:lahti} holds true
(here we use (a) and Proposition~\ref{p:lathi}).

From Lemma~\ref{l:lsc}, assumption (b), Fatou's Lemma and 
the pointwise estimates \eqref{f:lahti} we have that
\begin{equation}\label{f:limsup1}
\limsup_k\int_{\Omega_{\A}^+} \prec{u_{n_k}}\, \varphi\, d\Div\A
\leq
\limsup_k\int_{\Omega_{\A}^+} \prec[+]{u_{n_k}}\, \varphi\, d\Div\A
\leq
\int_{\Omega_{\A}^+} \prec[+]{u}\, \varphi\, d\Div\A\,.
\end{equation}
Recalling that
\[
\int_{\Omega_{\A}^-} \prec{u_{n_k}}\, \varphi\, d\Div\A =
-\int_{\Omega_{\A}^-} \prec{u_{n_k}}\, \varphi\, d|\Div\A|\,,
\]
the same argument gives
\begin{equation}\label{f:limsup2}
\limsup_k\int_{\Omega_{\A}^-} \prec{u_{n_k}}\, \varphi\, d\Div\A
\leq
\int_{\Omega_{\A}^-} \prec[-]{u}\, \varphi\, d\Div\A\,.
\end{equation}
Since $|\Div\A|(\Omega\setminus(\Omega_{\A}^- \cup \Omega_{\A}^+)) = 0$,
from \eqref{f:limsup1}, \eqref{f:limsup2} and
the equality case in \eqref{f:dispm} 
we get
\begin{equation}\label{f:limsup3}
\limsup_k\int_{\Omega} \prec{u_{n_k}}\, \varphi\, d\Div\A
\leq
\int_{\Omega_{\A}^+} \prec[+]{u}\, \varphi\, d\Div\A
+
\int_{\Omega_{\A}^-} \prec[-]{u}\, \varphi\, d\Div\A
= \int_\Omega \prec{u}\, \varphi\, d\Div\A\,.
\end{equation}
Finally, 
from \eqref{f:limsup3} and (a)
we conclude that
\[
\begin{split}
& \liminf_n \pscal{\pair{\A,Du_n}}{\varphi} =
-\limsup_k\left(
\int_\Omega \prec{u_{n_k}}\, \varphi\, d\Div A
+ \int_\Omega u_{n_k}\, \A\cdot \nabla\varphi\, dx
\right)
\\ & 
\geq
-\int_{\Omega} \prec{u}\, \varphi\, d\Div A
- \int_\Omega u\, \A\cdot \nabla\varphi\, dx
\\ & =
\pscal{\pair{\A,Du}}{\varphi}\,,
\end{split}
\]
i.e., \eqref{f:lsc} holds true.

\bigskip
Assume now that 
\eqref{f:lsc} holds true
for every $u_n, u\in BV(\Omega)$ satisfying (a), (b),
and let us prove that $\lambda\in\Lambda_{{\rm lsc}}$.
We claim that, under these assumptions,
\begin{equation}
\label{f:ineq}
\pair{\A,Du}\leq \pair[0]{\A, Du}\,,
\quad
\pair{\A,Du}\leq \pair[1]{\A, Du}\,,
\qquad
\forall u\in \BVA,
\end{equation}
in the sense of measures.
By a truncation argument and Remark~\ref{r:trunc}, it is enough to show that
the above inequality holds for every $u\in BV(\Omega)\cap L^\infty(\Omega)$.
Let $u\in BV(\Omega)\cap L^\infty(\Omega)$
and let $(u_n), (v_n) \subset W^{1,1}(\Omega)\cap L^\infty(\Omega)$
be the approximating sequences given by Proposition~\ref{p:approx}.
Observe that these sequences are bounded in $L^\infty(\Omega)$, so that
they satisfy assumption (b).
Since $(\preciso{u}_n)$ converges to $\prec[+]{u}$ $|\Div\A|$-a.e.\ in $\Omega$
and, by (b), also in $L^1(\Omega, |\Div\A|)$,
for every test function $\varphi\in C^\infty_c(\Omega)$ we have that
\[
\begin{split}
\lim_n\pscal{\pair{\A, Du_n}}{\varphi}
& = \lim_n \left(-\int_\Omega \preciso{u}_n \, \varphi\, d\Div\A 
- \int_\Omega u_n\, \A \cdot \nabla\varphi\, dx\right)
\\ & =
-\int_\Omega \prec[+]{u} \, \varphi\, d\Div\A 
- \int_\Omega u\, \A \cdot \nabla\varphi\, dx
= \pscal{\pair[1]{\A, Du}}{\varphi},
\end{split}
\]
hence, by the semicontinuity assumption, if $\varphi\geq 0$,
\[
\pscal{\pair{\A, Du}}{\varphi}
\leq
\liminf_n \pscal{\pair{\A, Du_n}}{\varphi}
=
\pscal{\pair[1]{\A, Du}}{\varphi}\,.
\]
The same argument, using the sequence $(v_n)$, shows that
\[
\pscal{\pair{\A, Du}}{\varphi}
\leq
\liminf_n \pscal{\pair{\A, Dv_n}}{\varphi}
=
\pscal{\pair[0]{\A, Du}}{\varphi}\,,
\]
so that \eqref{f:ineq} follows.

Let $\Omega' \Subset \Omega$ be an open domain with $C^1$ boundary.
From Proposition~\ref{p:pairing} we have that
\[
\pair{\A, D\chi_{\Omega'}} =
\left[(1-\lambda) \Trp{\A}{\partial\Omega'} +
\lambda\, \Trm{\A}{\partial\Omega'}\right]\, \hh\res \partial\Omega',
\]
hence, the inequalities \eqref{f:ineq} give
\begin{equation}
\label{f:tri0}
\begin{cases}
(1-\lambda) \left[\Trp{\A}{\partial\Omega'} - \Trm{\A}{\partial\Omega'}\right] \leq 0,
\\
-\lambda\, \left[\Trp{\A}{\partial\Omega'} - \Trm{\A}{\partial\Omega'}\right] \leq 0,
\end{cases}
\qquad
\text{$\hh$-a.e.\ on}\ \partial\Omega'.
\end{equation}
Let $\Sigma\subset\Omega$ be an oriented countably $\hh$-rectifiable set.
Recalling the definition of normal traces given in
Section~\ref{distrtraces},
from \eqref{f:tri0} we deduce that
\begin{equation}\label{f:tri}
\begin{cases}
(1-\lambda) \left[\Trp{\A}{\Sigma} - \Trm{\A}{\Sigma}\right] \leq 0,
\\
-\lambda\, \left[\Trp{\A}{\Sigma} - \Trm{\A}{\Sigma}\right] \leq 0,
\end{cases}
\qquad
\text{$\hh$-a.e.\ on}\ \Sigma.
\end{equation}
Let us choose an orientation for the countably $\hh$-rectifiable set
$\Sigma^+ := \Theta_{\A}^r \cap \Omega_{\A}^+$.
Since 
$\Sigma^+ \subset \Omega_{\A}$
and $\psi_{\A}(x) = 1$ for $|\Div\A|$-a.e.\ $x\in\Sigma^+$, 
from \eqref{f:trA} we have that
\[
\Div\A \res \Sigma^+ =
\left[\Trp{\A}{\Sigma^+} - \Trm{\A}{\Sigma^+}\right]
\hh\res \Sigma^+ > 0.
\]
Hence,
from the first inequality in \eqref{f:tri},
we deduce that $\lambda = 1$ $\hh$-a.e.\ on $\Sigma^+$.
A similar argument, using $\Sigma^- := \Theta_{\A}^r \cap \Omega_{\A}^-$,
shows that $\lambda = 0$ $\hh$-a.e.\ on $\Sigma^-$. 
\end{proof}

\begin{corollary}
\label{c:cont}
Let \(\A\in\DM(\Omega)\)
and let $\lambda\colon\Omega\to [0,1]$ be a Borel function.
Then the continuity property
\begin{equation}\label{f:cont}
\pscal{\pair{\A,Du}}{\varphi}
=
\lim_n \pscal{\pair{\A,Du_n}}{\varphi}
\qquad
\forall \varphi\in C^\infty_c(\Omega),
\end{equation}
holds for every $u_n, u\in BV(\Omega)$ satisfying (a) and (b)
in Theorem~\ref{t:lsc}
if and only if 
$\hh(\Theta_{\A}^r) = 0$.
\end{corollary}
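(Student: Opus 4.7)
The plan is to reduce the continuity assertion to the membership of $\lambda$ in $\Lambda_{{\rm lsc}}\cap\Lambda_{{\rm usc}}$ via Theorem~\ref{t:lsc}, and then to translate the resulting constraints on $\lambda$ into a null-set condition on $\Theta^r_{\A}$. First I would observe that the continuity property \eqref{f:cont} holds for every admissible pair $(u_n,u)$ satisfying (a) and (b) if and only if both the lower-semicontinuity inequality \eqref{f:lsc} and the upper-semicontinuity inequality \eqref{f:usc} do; by Theorem~\ref{t:lsc} this is equivalent to $\lambda\in\Lambda_{{\rm lsc}}\cap\Lambda_{{\rm usc}}$.

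If $\hh(\Theta^r_{\A})=0$, the defining conditions of both families are vacuous, so every Borel selection $\lambda$ belongs to their intersection and \eqref{f:cont} follows at once. Conversely, a $\lambda$ lying in the intersection must satisfy simultaneously $\lambda=0$ and $\lambda=1$ $\hh$-a.e.\ on $\Theta^r_{\A}\cap\Omega^-_{\A}$, and analogously $\lambda=1$ and $\lambda=0$ $\hh$-a.e.\ on $\Theta^r_{\A}\cap\Omega^+_{\A}$. Consistency forces
\[
\hh\bigl(\Theta^r_{\A}\cap\Omega^+_{\A}\bigr)=\hh\bigl(\Theta^r_{\A}\cap\Omega^-_{\A}\bigr)=0,
\]
and therefore $\hh(\Theta^r_{\A}\cap\Omega_{\A})=0$.

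The delicate step is passing from this equality to the conclusion $\hh(\Theta^r_{\A})=0$. Here I would combine two ingredients. On the one hand, the polar decomposition $\Div\A=\psi_{\A}|\Div\A|$, together with Lemma~\ref{r:density}, gives $|\Div\A|(\Omega\setminus\Omega_{\A})=0$. On the other hand, $\Theta^r_{\A}\subseteq\Theta_{\A}$ is $\sigma$-finite with respect to $\hh$, so the absolutely continuous and Cantor parts of $\Div\A$ in the decomposition of Proposition~\ref{p:basicVF}(iii) vanish on $\Theta^r_{\A}$, whence $|\Div\A|\res\Theta^r_{\A}=|f|\,\hh\res\Theta^r_{\A}$. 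Putting these together,
\[
|\Div\A|(\Theta^r_{\A})=\int_{\Theta^r_{\A}\cap\Omega_{\A}}|f|\,d\hh+|\Div\A|(\Theta^r_{\A}\setminus\Omega_{\A})=0.
\]

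Finally, applying property~\eqref{f:opn} with $\mu=|\Div\A|$ and $E:=\Theta^r_{\A}$ yields $|\Div\A|(B_r(x))=o(r^{N-1})$ for $\hh$-a.e.\ $x\in\Theta^r_{\A}$, which contradicts the very definition of $\Theta_{\A}$ (where the upper $(N-1)$-density of $|\Div\A|$ is strictly positive) unless $\hh(\Theta^r_{\A})=0$. This density comparison is the crux of the argument: it is the place where the absolute continuity $|\Div\A|\ll\hh$ (Proposition~\ref{p:basicVF}(i)) is used to convert the concentration information about $\psi_{\A}$ on $\Omega_{\A}$ into a genuine Hausdorff-null statement on $\Theta^r_{\A}$.
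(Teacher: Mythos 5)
Your proof is correct and follows essentially the same route as the paper: reduce \eqref{f:cont} to the conjunction of \eqref{f:lsc} and \eqref{f:usc}, apply Theorem~\ref{t:lsc} to translate this into $\lambda\in\Lambda_{{\rm lsc}}\cap\Lambda_{{\rm usc}}$, and observe that this intersection is nonempty (equivalently, contains every Borel $\lambda$) precisely when $\hh(\Theta_{\A}^r)=0$. The only difference is that you carefully justify the final equivalence — in particular why $\hh(\Theta^r_{\A}\cap\Omega_{\A})=0$ upgrades to $\hh(\Theta^r_{\A})=0$ via $|\Div\A|(\Omega\setminus\Omega_{\A})=0$ and the density estimate \eqref{f:opn} — a step the paper dispatches as immediate ``from the very definition'' of the two families; this elaboration is sound and fills a genuine, if small, gap in the exposition.
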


\begin{proof}
We have that the stated property holds
if and only if 
both \eqref{f:lsc} and \eqref{f:usc} hold.
From Theorem~\ref{t:lsc},
these inequalities hold (for every $(u_n)$, $u$)
if and only if $\lambda \in \Lambda_{{\rm lsc}} \cap \Lambda_{{\rm usc}}$.
Finally, from the very definition of
$\Lambda_{{\rm lsc}}$ and $\Lambda_{{\rm usc}}$,
we have that
$\Lambda_{{\rm lsc}} \cap \Lambda_{{\rm usc}}\neq \emptyset$
if and only if $\hh(\Theta_{\A}^r) = 0$.
\end{proof}

\begin{remark}
The assumption $\hh(\Theta_{\A}^r) = 0$
is trivially satisfied if $\Div^j\A = 0$,
e.g.\ if $\Div\A \in L^1(\Omega)$.
\end{remark}

\begin{example}
In view of Corollary~\ref{c:cont} we have that,
in general, 
the continuity property \eqref{f:cont} does not hold
with respect to the strict convergence in $BV$.
Specifically, let $\Omega = (-2,2)\subset\R$ and consider
$\A := \chi_{(-1,1)}$,
so that $\Div\A = \delta_{-1} - \delta_{1}$,
and $\Theta_{\A}^r = \Theta_{\A} = \{-1, +1\}$
is not empty.
Let $\lambda\colon \Omega \to [0,1]$ be any Borel function.
Let $u_n(x) := \max\{\min\{n+1-n|x|, 1\}, 0\}$.
It is readily seen that $(u_n)$ strictly converges to
$u := \chi_{[-1,1]}$, so that $(-u_n)$ strictly converges
to $-u$, and
$\pscal{\pair{\A, Du_n}}{\varphi} = 0$ for every $n$.
On the other hand, 
choosing $\varphi$ such that $\varphi(-1) = 0$ and
$\varphi(1) = 1$,
one has
\begin{gather*}
\pscal{\pair{\A, Du}}{\varphi}
= [1-\lambda(-1)] \, \varphi(-1)
+ [\lambda(1)-1]\, \varphi(1) = \lambda(1) - 1,
\\
\pscal{\pair{\A, D(-u)}}{\varphi}
= -\lambda(-1) \, \varphi(-1)
+ \lambda(1)\, \varphi(1) = \lambda(1),
\end{gather*}
and at least one of the right-hand sides must be different
from $0$.
\end{example}

\begin{example}
We remark that, in general, 
\eqref{f:lsc} does not hold
if assumption (a) is replaced by
the weak${}^*$ convergence in $BV$.
Specifically, let us consider $\Omega = (-2,2)\subset\R$,
$\A := \chi_{(0,1)}$ and
$u_n(x) := \max\{1-n|x|, 0\}$.
Since $(u_n)$ converges to $u=0$ in $L^1(\Omega)$ and
$|Du_n|(\Omega) = 2$ for every $n$,
then 
$(u_n)$ converges weakly${}^*$ to $u$ in $BV(\Omega)$.
(see \cite[Proposition~3.13]{AFP}):
If $\varphi\in C^\infty_c(\Omega)$
is strictly positive in $0$, one has
\[
\begin{split}
\liminf_n
\pscal{\pair{\A, Du_n}}{\varphi}
& =
\liminf_n
\left(
-u_n(0)\, \varphi(0) + u_n(1)\, \varphi(1) -\int_0^1 u_n\, \varphi'
\right)
\\ & = -\varphi(0) 
< 0 = \pscal{\pair{\A, Du}}{\varphi}\,,
\end{split}
\]
so that \eqref{f:lsc} does not hold.
\end{example}

\bigskip
\noindent
{\bf Acknowledgments.}
The authors would like to thank 
Giovanni E.\ Comi
for some useful remarks
on a preliminary version of the manuscript.
A.M.\ and V.D.C.\ 
have been partially supported by the Gruppo Nazionale per l'Analisi Matematica, 
la Probabilit\`a e le loro Applicazioni (GNAMPA) of the Istituto Nazionale di Alta Matematica (INdAM). 
G.C.\ and A.M.\ have been partially supported by Sapienza - Ateneo 2017 Project "Differential Models in Mathematical Physics".

\def\cprime{$'$}
\begin{bibdiv}
\begin{biblist}

\bib{AmbCriMan}{article}{
      author={Ambrosio, {L.}},
      author={Crippa, {G.}},
      author={Maniglia, {S.}},
       title={Traces and fine properties of a {$BD$} class of vector fields and
  applications},
        date={2005},
        ISSN={0240-2963},
     journal={Ann. Fac. Sci. Toulouse Math. (6)},
      volume={14},
      number={4},
       pages={527\ndash 561},
         url={http://afst.cedram.org/item?id=AFST_2005_6_14_4_527_0},
      review={\MR{2188582}},
}

\bib{ADM}{incollection}{
      author={Ambrosio, {L.}},
      author={De~Lellis, {C.}},
      author={Mal\'y, {J.}},
       title={On the chain rule for the divergence of {BV}-like vector fields:
  applications, partial results, open problems},
        date={2007},
   booktitle={Perspectives in nonlinear partial differential equations},
      series={Contemp. Math.},
      volume={446},
   publisher={Amer. Math. Soc., Providence, RI},
       pages={31\ndash 67},
         url={http://dx.doi.org/10.1090/conm/446/08625},
      review={\MR{2373724}},
}

\bib{AFP}{book}{
      author={Ambrosio, {L.}},
      author={Fusco, {N.}},
      author={Pallara, {D.}},
       title={Functions of bounded variation and free discontinuity problems},
      series={Oxford Mathematical Monographs},
   publisher={The Clarendon Press Oxford University Press},
     address={New York},
        date={2000},
        ISBN={0-19-850245-1},
      review={\MR{MR1857292 (2003a:49002)}},
}

\bib{ABCM}{article}{
      author={Andreu, F.},
      author={Ballester, C.},
      author={Caselles, V.},
      author={Maz\'{o}n, J.M.},
       title={The {D}irichlet problem for the total variation flow},
        date={2001},
        ISSN={0022-1236},
     journal={J. Funct. Anal.},
      volume={180},
      number={2},
       pages={347\ndash 403},
         url={https://doi.org/10.1006/jfan.2000.3698},
      review={\MR{1814993}},
}

\bib{AVCM}{book}{
      author={Andreu-Vaillo, {F.}},
      author={Caselles, {V.}},
      author={Maz\'on, {J.M.}},
       title={Parabolic quasilinear equations minimizing linear growth
  functionals},
      series={Progress in Mathematics},
   publisher={Birkh\"auser Verlag, Basel},
        date={2004},
      volume={223},
        ISBN={3-7643-6619-2},
         url={http://dx.doi.org/10.1007/978-3-0348-7928-6},
      review={\MR{2033382}},
}

\bib{Anz}{article}{
      author={Anzellotti, {G.}},
       title={Pairings between measures and bounded functions and compensated
  compactness},
        date={1983},
        ISSN={0003-4622},
     journal={Ann. Mat. Pura Appl. (4)},
      volume={135},
       pages={293\ndash 318 (1984)},
         url={http://dx.doi.org/10.1007/BF01781073},
      review={\MR{750538}},
}

\bib{CDLP}{article}{
      author={Carriero, M.},
      author={Dal~Maso, G.},
      author={Leaci, A.},
      author={Pascali, E.},
       title={Relaxation of the nonparametric plateau problem with an
  obstacle},
        date={1988},
        ISSN={0021-7824},
     journal={J. Math. Pures Appl. (9)},
      volume={67},
      number={4},
       pages={359\ndash 396},
      review={\MR{978576}},
}

\bib{Cas}{article}{
      author={Caselles, V.},
       title={On the entropy conditions for some flux limited diffusion
  equations},
        date={2011},
        ISSN={0022-0396},
     journal={J. Differential Equations},
      volume={250},
      number={8},
       pages={3311\ndash 3348},
         url={http://dx.doi.org/10.1016/j.jde.2011.01.027},
      review={\MR{2772392}},
}

\bib{ChCoTo}{article}{
      author={Chen, {G.-Q.}},
      author={Comi, {G.E.}},
      author={Torres, {M.}},
       title={Cauchy fluxes and {G}auss--{G}reen formulas for
  divergence-measure fields over general open sets},
        date={2018},
     journal={Arch.\ Rational Mech.\ Anal.},
        note={DOI 10.1007/s00205-018-01355-4},
}

\bib{ChenFrid}{article}{
      author={Chen, {G.-Q.}},
      author={Frid, {H.}},
       title={Divergence-measure fields and hyperbolic conservation laws},
        date={1999},
        ISSN={0003-9527},
     journal={Arch. Ration. Mech. Anal.},
      volume={147},
      number={2},
       pages={89\ndash 118},
         url={http://dx.doi.org/10.1007/s002050050146},
      review={\MR{1702637}},
}

\bib{ChFr1}{article}{
      author={Chen, {G.-Q.}},
      author={Frid, {H.}},
       title={Extended divergence-measure fields and the {E}uler equations for
  gas dynamics},
        date={2003},
        ISSN={0010-3616},
     journal={Comm. Math. Phys.},
      volume={236},
      number={2},
       pages={251\ndash 280},
         url={http://dx.doi.org/10.1007/s00220-003-0823-7},
      review={\MR{1981992}},
}

\bib{ChTo2}{article}{
      author={Chen, {G.-Q.}},
      author={Torres, {M.}},
       title={Divergence-measure fields, sets of finite perimeter, and
  conservation laws},
        date={2005},
        ISSN={0003-9527},
     journal={Arch. Ration. Mech. Anal.},
      volume={175},
      number={2},
       pages={245\ndash 267},
         url={http://dx.doi.org/10.1007/s00205-004-0346-1},
      review={\MR{2118477}},
}

\bib{ChTo}{article}{
      author={Chen, {G.-Q.}},
      author={Torres, {M.}},
       title={On the structure of solutions of nonlinear hyperbolic systems of
  conservation laws},
        date={2011},
        ISSN={1534-0392},
     journal={Commun. Pure Appl. Anal.},
      volume={10},
      number={4},
       pages={1011\ndash 1036},
         url={http://dx.doi.org/10.3934/cpaa.2011.10.1011},
      review={\MR{2787432 (2012c:35263)}},
}

\bib{ChToZi}{article}{
      author={Chen, {G.-Q.}},
      author={Torres, {M.}},
      author={Ziemer, {W.P.}},
       title={Gauss-{G}reen theorem for weakly differentiable vector fields,
  sets of finite perimeter, and balance laws},
        date={2009},
        ISSN={0010-3640},
     journal={Comm. Pure Appl. Math.},
      volume={62},
      number={2},
       pages={242\ndash 304},
         url={http://dx.doi.org/10.1002/cpa.20262},
      review={\MR{2468610}},
}

\bib{ComiMag}{article}{
      author={Comi, {G.E.}},
      author={Magnani, {V.}},
       title={On the {G}auss-{G}reen theorem in stratified groups},
        date={2018},
        note={arxiv:1806.04011},
}

\bib{ComiPayne}{article}{
      author={Comi, {G.E.}},
      author={Payne, {K.R.}},
       title={On locally essentially bounded divergence measure fields and sets
  of locally finite perimeter},
        date={2017},
     journal={Adv. Calc. Var.},
        note={DOI 10.1515/acv-2017-0001},
}

\bib{CD4}{article}{
      author={Crasta, {G.}},
      author={De~Cicco, {V.}},
       title={An extension of the pairing theory between divergence-measure
   fields and BV functions},
   journal={J. Funct. Anal.},
   volume={276},
   date={2019},
   number={8},
   pages={2605--2635},
   issn={0022-1236},
   review={\MR{3926127}},
}

\bib{CD2}{article}{
      author={Crasta, {G.}},
      author={De~Cicco, {V.}},
       title={On the chain rule formulas for divergences and applications to
  conservation laws},
        date={2017},
        ISSN={0362-546X},
     journal={Nonlinear Anal.},
      volume={153},
       pages={275\ndash 293},
         url={https://doi.org/10.1016/j.na.2016.10.005},
      review={\MR{3614672}},
}

\bib{CD3}{article}{
      author={Crasta, {G.}},
      author={De~Cicco, {V.}},
       title={Anzellotti's pairing theory and the {G}auss--{G}reen theorem},
        date={2019},
        ISSN={0001-8708},
     journal={Adv. Math.},
      volume={343},
       pages={935\ndash 970},
         url={https://doi.org/10.1016/j.aim.2018.12.007},
      review={\MR{3892346}},
}

\bib{DCFV2}{article}{
      author={De~Cicco, V.},
      author={Fusco, N.},
      author={Verde, A.},
       title={A chain rule formula in {$BV$} and application to lower
  semicontinuity},
        date={2007},
        ISSN={0944-2669},
     journal={Calc. Var. Partial Differential Equations},
      volume={28},
      number={4},
       pages={427\ndash 447},
         url={http://dx.doi.org/10.1007/s00526-006-0048-7},
      review={\MR{MR2293980 (2007j:49016)}},
}

\bib{DeGiOlPe}{misc}{
      author={De~Cicco, {V.}},
      author={Giachetti, {D.}},
      author={Oliva, {F.}},
      author={Petitta, {F.}},
       title={Dirichlet problems with 1-laplacian principal part and strong
  singularities},
        date={2017},
        note={Preprint},
}

\bib{DeGiSe}{article}{
      author={De~Cicco, {V.}},
      author={Giachetti, {D.}},
      author={Segura De~Le\'on, {S.}},
       title={Elliptic problems involving the 1--{L}aplacian and a singular
  lower order term},
        date={2018},
     journal={J. Lond. Math. Soc.},
        note={DOI 10.1112/jlms.12172},
}

\bib{DGMM}{article}{
      author={Degiovanni, {M.}},
      author={Marzocchi, {A.}},
      author={Musesti, {A.}},
       title={Cauchy fluxes associated with tensor fields having divergence
  measure},
        date={1999},
        ISSN={0003-9527},
     journal={Arch. Ration. Mech. Anal.},
      volume={147},
      number={3},
       pages={197\ndash 223},
         url={http://dx.doi.org/10.1007/s002050050149},
      review={\MR{1709215}},
}

\bib{FonLeoBook}{book}{
      author={Fonseca, {I.}},
      author={Leoni, {G.}},
       title={Modern methods in the calculus of variations: {$L^p$} spaces},
      series={Springer Monographs in Mathematics},
   publisher={Springer, New York},
        date={2007},
        ISBN={978-0-387-35784-3},
      review={\MR{2341508}},
}

\bib{GMS1}{book}{
      author={Giaquinta, {M.}},
      author={Modica, {G.}},
      author={Sou{\v{c}}ek, {J.}},
       title={Cartesian currents in the calculus of variations. {I}},
      series={Ergebnisse der Mathematik und ihrer Grenzgebiete. 3. Folge. A
  Series of Modern Surveys in Mathematics [Results in Mathematics and Related
  Areas. 3rd Series. A Series of Modern Surveys in Mathematics]},
   publisher={Springer-Verlag},
     address={Berlin},
        date={1998},
      volume={37},
        ISBN={3-540-64009-6},
        note={Cartesian currents},
      review={\MR{1645086 (2000b:49001a)}},
}

\bib{HI}{article}{
      author={Huisken, {G.}},
      author={Ilmanen, {T.}},
       title={The inverse mean curvature flow and the {R}iemannian {P}enrose
  inequality},
        date={2001},
        ISSN={0022-040X},
     journal={J. Differential Geom.},
      volume={59},
      number={3},
       pages={353\ndash 437},
         url={http://projecteuclid.org/euclid.jdg/1090349447},
      review={\MR{1916951}},
}

\bib{K1}{article}{
      author={Kawohl, {B.}},
       title={On a family of torsional creep problems},
        date={1990},
     journal={J. Reine Angew. Math.},
      volume={410},
       pages={1\ndash 22},
}

\bib{La}{article}{
      author={Lahti, {P.}},
       title={Strict and pointwise convergence of BV functions in metric
  spaces},
        date={2017},
     journal={J. Math. Anal. Appl.},
      volume={455},
       pages={1005\ndash 1021},
         url={https://doi.org/10.1016/j.jmaa.2017.06.010},
}

\bib{LaSe}{article}{
      author={Latorre, {M.}},
      author={Segura De~Le\'on, {S.}},
       title={Existence and comparison results for an elliptic equation
  involving the 1-{L}aplacian and {$L^1$}-data},
        date={2018},
        ISSN={1424-3199},
     journal={J. Evol. Equ.},
      volume={18},
      number={1},
       pages={1\ndash 28},
         url={https://doi.org/10.1007/s00028-017-0388-0},
      review={\MR{3772837}},
}

\bib{LeoSar2}{misc}{
      author={Leonardi, {G.P.}},
      author={Saracco, {G.}},
       title={Rigidity and trace properties of divergence-measure vector
  fields},
        date={2017},
        note={Preprint},
}

\bib{LeoSar}{article}{
      author={Leonardi, {G.P.}},
      author={Saracco, {G.}},
       title={The prescribed mean curvature equation in weakly regular
  domains},
        date={2018},
        ISSN={1021-9722},
     journal={NoDEA Nonlinear Differential Equations Appl.},
      volume={25},
      number={2},
       pages={Art. 9, 29},
         url={https://doi.org/10.1007/s00030-018-0500-3},
      review={\MR{3767675}},
}

\bib{Mazon2016}{article}{
      author={Maz\'{o}n, {J.M.}},
       title={The {E}uler-{L}agrange equation for the anisotropic least
  gradient problem},
        date={2016},
        ISSN={1468-1218},
     journal={Nonlinear Anal. Real World Appl.},
      volume={31},
       pages={452\ndash 472},
         url={https://doi.org/10.1016/j.nonrwa.2016.02.009},
      review={\MR{3490852}},
}

\bib{MaRoSe}{article}{
      author={Maz\'{o}n, {J.M.}},
      author={Rossi, {J.D.}},
      author={Segura~de Le\'{o}n, {S.}},
       title={Functions of least gradient and 1-harmonic functions},
        date={2014},
        ISSN={0022-2518},
     journal={Indiana Univ. Math. J.},
      volume={63},
      number={4},
       pages={1067\ndash 1084},
         url={https://doi.org/10.1512/iumj.2014.63.5327},
      review={\MR{3263922}},
}

\bib{SchSch}{article}{
      author={Scheven, {C.}},
      author={Schmidt, {T.}},
       title={B{V} supersolutions to equations of 1-{L}aplace and minimal
  surface type},
        date={2016},
        ISSN={0022-0396},
     journal={J. Differential Equations},
      volume={261},
      number={3},
       pages={1904\ndash 1932},
         url={http://dx.doi.org/10.1016/j.jde.2016.04.015},
      review={\MR{3501836}},
}

\bib{SchSch2}{misc}{
      author={Scheven, {C.}},
      author={Schmidt, {T.}},
       title={An {A}nzellotti type pairing for divergence-measure fields and a
  notion of weakly super-1-harmonic functions},
        date={2017},
        note={Preprint},
}

\bib{SchSch3}{article}{
      author={Scheven, {C.}},
      author={Schmidt, {T.}},
       title={On the dual formulation of obstacle problems for the total
  variation and the area functional},
        date={2018},
        ISSN={0294-1449},
     journal={Ann. Inst. H. Poincar\'{e} Anal. Non Lin\'{e}aire},
      volume={35},
      number={5},
       pages={1175\ndash 1207},
         url={https://doi.org/10.1016/j.anihpc.2017.10.003},
      review={\MR{3813962}},
}

\bib{Schu}{article}{
      author={Schuricht, {F.}},
       title={A new mathematical foundation for contact interactions in
  continuum physics},
        date={2007},
        ISSN={0003-9527},
     journal={Arch. Ration. Mech. Anal.},
      volume={184},
      number={3},
       pages={495\ndash 551},
         url={http://dx.doi.org/10.1007/s00205-006-0032-6},
      review={\MR{2299760}},
}

\bib{Silh}{article}{
      author={Silhav\'y, M.},
       title={Divergence measure fields and {C}auchy's stress theorem},
        date={2005},
        ISSN={0041-8994},
     journal={Rend. Sem. Mat. Univ. Padova},
      volume={113},
       pages={15\ndash 45},
      review={\MR{2168979}},
}

\end{biblist}
\end{bibdiv}

\end{document}